\let\oldsqrt\sqrt
\def\sqrt{\mathpalette\DHLhksqrt} \def\DHLhksqrt#1#2{%
\setbox0=\hbox{$#1\oldsqrt{#2\,}$}\dimen0=\ht0
\advance\dimen0-0.2\ht0
\setbox2=\hbox{\vrule height\ht0 depth -\dimen0}%
{\box0\lower0.4pt\box2}}
\def\b0{\boldsymbol{0}}
\newcommand{\R}     {\mathbb{R}} 
\newcommand{\Z}     {\mathbb{Z}} 
\newcommand{\N}     {\mathbb{N}}
\newcommand{\Bcal}   {{\mathcal B }}
\newcommand{\Ical}   {{\mathcal I }}
\newcommand{\m} {{\mathfrak m}}
\newcommand{\eps}{\varepsilon}
\newcommand{\Is}{{\operatorname {Is}\,}} 
\newcommand{\Exp}{\mathscr{E}\kern-0.2mm{\operatorname{xp}}}
\newcommand{\Log}{\mathscr{L}\kern-0.2mm{\operatorname{og}}}
\def\1{{\mathchoice {1\mskip-4mu\mathrm l}      
{1\mskip-4mu\mathrm l}
{1\mskip-4.5mu\mathrm l} {1\mskip-5mu\mathrm l}}}
\numberwithin{equation}{section}
\numberwithin{figure}{section}
\newtheoremstyle{plain}
  {6pt}
  {4pt}
  {\slshape}
  {}
  {\bfseries}
  {.}
  {0.5em}
  {}%
\newtheorem{thm}{\protect\theoremname}
  \newtheorem{defn}[thm]{\protect\definitionname}
  \newtheorem{prop}[thm]{\protect\propositionname}
  \newtheorem{rem}[thm]{\protect\remarkname}
  \newtheorem{cor}[thm]{\protect\corollaryname}
  \newtheorem{lem}[thm]{\protect\lemmaname}
  \numberwithin{thm}{section}
  \providecommand{\corollaryname}{Corollary}
  \providecommand{\definitionname}{Definition}
  \providecommand{\factname}{Fact}
  \providecommand{\propositionname}{Proposition}
  \providecommand{\remarkname}{Remark}
\providecommand{\theoremname}{Theorem}
\providecommand{\lemmaname}{Lemma}
\newcommand*{\defeq}{\mathrel{\rlap{%
                     \raisebox{0.3ex}{$\m@th\cdot$}}%
                     \raisebox{-0.3ex}{$\m@th\cdot$}}%
                     =}
\title{
Combinatorial identities from an inhomogeneous Ising chain
}
\author{Jessica Jay}
\address[Jessica Jay]{School of Mathematics, University of Bristol, Bristol, BS8 1TW, UK, and, \newline \indent The Heilbronn
Institute for Mathematical Research, Bristol, UK.}
\email{jessica.jay@bristol.ac.uk}
\author{Benjamin Lees}
\address[Benjamin Lees]{School of Mathematics, University of Leeds, Leeds, LS2 9JT, UK}
\email{b.t.lees@leeds.ac.uk}
\date{}
\begin{document}

\begin{abstract}
    We study a family of inhomogeneous Ising chain models along with an equivalent family of nearest neighbour particle systems. By the correspondence between the two families we prove identities of combinatorial significance relating to certain integer and Frobenius partitions. In particular, for certain parameter values we see that one of our identities relates to generating functions for overpartitions. Using the identities we give a surprising product form of the partition function for an Ising chain with homogeneous interaction and an inhomogeneous external field. We also use the connection between the Ising chain and particle system to find interesting long-range reversible dynamics for the particle system that do not have a product form stationary measure. 
\end{abstract}

\maketitle

\section{Introduction}

Recent research has shown that studying natural distributional questions for certain interacting particle systems under their reversible stationary measures, known as blocking measures, can lead to probabilistic proofs and interpretations of both classical and new identities of combinatorial significance. In \cite{blocking}, Bal\'azs and Bowen gave a probabilistic proof of the Jacobi triple product identity. This identity arises as an equivalence of blocking measures for the Asymmetric Simple Exclusion Process (ASEP) and the Asymmetric Zero-Range Process (AZRP). Both of these process were introduced by Spitzer in 1970 \cite{spitzer} and have been used to model traffic jams and queues in series, for example. ASEP is a particle system on $\mathbb{Z}$ with at most one particle per site; the process evolves by particles performing nearest neighbour jumps under an exclusion rule. AZRP is a particle system on $\mathbb{Z}_{< 0}$ with no limit on the number of particles per site. Particles complete nearest neighbour jumps; also particles can enter and leave the system through an open right boundary. The proof follows from the exclusion-zero range correspondence. Other natural questions for ASEP under its blocking measure also lead to probabilistic proofs of known identities; in \cite{ad_bal_j_asep_proofs}, Adams, Bal\'azs and Jay, prove Euler's Identity, the $q$-Binomial Theorem and the Durfee Rectangles Identity. 
\par A natural question is what other identities can be proved by considering generalisations of ASEP? In \cite{MDJ}, Bal\'azs, Fretwell and Jay, considered exclusion systems with more particles allowed per site. In particular, by considering the family of 0-1-2 processes on $\mathbb{Z}$ with product blocking measures and an equivalent family of kinetically constrained models on $\mathbb{Z}_{<0}$, they proved new 3 variable Jacobi style identities relating to Generalised Frobenius Partitions (GFPs) with a 2 repetition condition. Specialising to particular processes in the 0-1-2 family, for example ASEP$(q,1)$, 2-exclusion and a particle-antiparticle process, specialises the identities to known identities studied by Andrews \cite{Andrews_GFP}, for example. In \cite{MDJ}, the authors also considered the $k$-exclusion process (for any $k\in\mathbb{Z}_{>0}$) and its equivalent kinetically constrained model. This gave a probabilistic proof to similar Jacobi style identities relating to GFPs with a $k$-repetition condition. 
\par In this article we take a similar approach, studying processes with at most 1 particle per site with more general interaction rules than simple exclusion. By introducing an interaction that creates a preference or aversion for particles to neighbour each other we have a natural correspondence to the Ising model.
\par The Ising model is a simple model of magnetism originally invented by Lenz for his student Ising. Each vertex of a graph has a degree of freedom called a \emph{spin}, taking values in $\{-1,+1\}$, that interacts with their neighbour's spins. The magnetism of the system, for example, is given by the average of these spin values. It is now probably the most studied model in statistical mechanics, with a vast literature. For an introduction to this model we direct the reader to \cite{FandV}. In this article we will introduce an Ising chain, i.e. an Ising model on $\Z$, with inhomogeneous interactions between spins and an inhomogeneous external field. The interactions will be chosen to ensure that our measure on spin configurations concentrates on a countable set where the 
energy of interactions is finite. This concentration occurs because our external field will strongly penalise $-1$ spins at sites $i\gg 1$ and $+1$ spins at sites $i\ll 0$. One can think physically of starting on a finite chain $\{-L,\dots,0,\dots,L\}$ with a magnet at $L+1$ attracting $+1$ spins and then simultaneously increasing $L$ and the strength of the magnet. A similar model of a driven lattice gas was considered by Katz, Lebowitz, and Spohn \cite{KLS} on hypercubic lattices where simulations suggest a non-equilibrium phase transition when the strength of the external field varies.
\par We consider Kawasaki dynamics on this Ising model. These dynamics exchange differing spins at a rate giving rise to stationary and reversible dynamics on the chain. By a process of \emph{standing up}, the direct analogue of the exclusion-zero range correspondence, we can map this Ising chain to a system of interacting particles on $\Z_{<0}$ with more general dynamics than the asymmetric zero-range process (AZRP). In fact, when the strength of interaction in the Ising chain is sent to zero we recover ASEP and, through standing up, AZRP. The results mentioned above that derive from considering multiple particle types also suggests the possibility of a natural extension in the interacting case to multiple particle types, and also to similar considerations for the Potts model. In this case multiple stand up maps are required ($q-1$ maps for the $q$-state Potts model) to recover the spin configuration from the resulting particle systems. There is also the possibility of using correspondences between quantum spin chains (such as those coming from the Jordan-Wigner transform) in place of the standing up process to derive identities, it would be very interesting to understand the nature of such identities. 

\par By equating stationary measures for the Ising chain and equivalent particle systems we find identities of combinatorial significance relating to generating functions of certain integer and Frobenius partitions (for an introduction to Frobenius partitions see Andrews \cite{Andrews_GFP}). In particular, we see that for certain parameter values we can relate one of the identities to generating functions of overpartitions, that is integer partitions where the first instance of each size of part can be either overlined or not. Overpartitions have been shown to relate to interesting $q$-series identities such as $q$-Gauss summation and Ramanujan's $_1\varphi_1$ summation (see works by Corteel, Lovejoy and Yee for example, \cite{Lovejoy_Corteel_OPs}, \cite{Lovejoy_Corteel_Yee_OPs_GFPs} and \cite{Lovejoy_OPs}). This link between Ising configurations and overpartitions is very interesting and the authors wonder if studying other questions for the Ising process would lead to new results in the theory of overpartitions. We also see that other parameter values relate the identities to generating functions for partitions where the first part of each size can be represented in one of $m$ ways, for some given $m\in\mathbb{Z}_{>0}$. We call these partitions ``$(m-1)$-overpartitions", in particular if $m=1$ these are just integer partitions and for $m=2$ these are classical overpartitions. These generating functions are given on OEIS (A321884) \cite{OEIS} with the interpretation coming from partitions into coloured blocks of equal parts from $m$ different colours. The authors would be interested to know if the generating functions for these more general $(m-1)$-overpartitions relate to other $q$-series identities similar to the $q$-Gauss and Ramanujan's $_1\varphi_1$ summations seen in the $m=2$ case.

\par \vspace{2mm} Often $q$-series identities, such as the Jacobi triple product identity, have interpretations in terms of the representation theory of affine Lie algebras. Some of the particle systems that lead to probabilistic proofs of such identities, such as ASEP and ASEP$(q,1)$ are known to have underlying algebraic structure. In particular, these two processes can be constructed from representations of $\mathcal{U}_q(\mathfrak{gl}_2)$, the quantum group associated to the Lie algebra $\mathfrak{gl}_2$, (see the works of Carinci, Giardin\'a, Redig and Sasamoto \cite{redig} and also Kuan \cite{kuan}). Going the other way and finding the underlying algebraic structure to a process is very difficult; some processes, such as 2-exclusion, that lead to probabilistic proofs of combinatorial identities are yet to be found algebraically. It would be interesting to know if there is an algebraic interpretation of the identities we find in this paper and also if the inhomogeneous Ising chain we study here has an underlying algebraic structure which it can be constructed from.

\par \vspace{2mm} It is natural and fairly simple to consider Ising models with longer range interactions. We hence generalise the Ising model above to allow interactions between spins up to some arbitrary distance from each other. Under mild conditions on this interaction it is possible to define stationary and reversible dynamics allowing the exchange of distant spins. These dynamics can then be transferred to the particle system through the standing up mapping, resulting in quite unusual long range dynamics that are stationary and reversible. This procedure for finding stationary and reversible long range dynamics also works ``in reverse". By starting with natural long range dynamics for the particle system we can then use the mapping to find a corresponding Ising-like model (this will be a measure on spins but not one taking the familiar Ising form) for which the dynamics are stationary and reversible. We can then use the mapping again to find the measure for the particle system for which the dynamics are stationary and reversible. We note that, interestingly, these dynamics do not have a product form. 
\newpage\subsection{Results}~
\par Our family of Ising chain models are parameterised by an interaction function $J:\mathbb{Z}\to\mathbb{R}$ which, to ensure countability of the state space of the process, satisfies conditions which we give in Lemma \ref{lem:nearestneighbourconcentration}. Many interaction functions satisfy these conditions, and in particular we consider two functions of interest, $J\equiv 1$ which corresponds to the homogeneous Ising interaction, and also $J(i)=i$ (an inhomogeneous Ising interaction). In addition the Ising chain will feature an inhomogeneous external field where, roughly speaking, a positive spin at site $i<0$ is subject to a factor $q^{-2i}$ and a negative spin at site $i\geq1$ is subject to a factor $q^{2i}$, for some $q\in(0,1)$. For details see Section \ref{sec:Ising}.
\par By considering the homogeneous Ising interaction and equivalent particle system we prove the following identity.
\begin{restatable}{thm}{constident}
\label{thm: J is 1 identity}
For $Q\in(0,1)$, $z>0$, and $y \in (0,1]$, 
  $$\sum\limits_{m\in\mathbb{Z}} Q^{\frac{m(m+1)}{2}}z^m=Z(Q,z,y)\prod\limits_{i=1}^\infty\frac{1-Q^i}{1+(y-1)Q^i}.$$
  Where, 
  \begin{align*}
    &Z(Q,z,y) =  1 + \sum_{\substack{L,R\geq0\\ L+R>0}}y^{(L+R)} \sum_{\substack{\ell_1,\dots,\ell_L\geq 1 \\ m_1,\dots,m_{R}\geq 1}}\prod_{j=1}^{L} \frac{Q^{\tfrac12 \ell_j(\ell_j-1)+j\ell_j+\ell_{j-1}(\ell_j+\cdots+\ell_{L})}z^{-l_j}}{1-Q^{\ell_j+\cdots + \ell_{L}}}
    \\
    &\qquad\qquad\qquad\qquad\prod_{j=1}^{R}\frac{Q^{\tfrac12 m_j(m_j-1)+jm_j+m_{j-1}(m_{j}+\cdots+m_{R})}z^{m_j}}{1-Q^{m_{j}+\cdots+m_{R}}}\bigg(\big(1-y^{-1}\big)Q^{\ell_1+\cdots + \ell_L}Q^{m_{1}+\cdots+m_{R}} + y^{-1}\bigg),
\end{align*}
with $\ell_0=-1$ and $m_0=0$.
\end{restatable}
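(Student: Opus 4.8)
\emph{Strategy.} The plan is to compute the normalising constant $\mathcal Z$ of the stationary measure for the homogeneous Ising chain ($J\equiv 1$) of Section~\ref{sec:Ising} in two ways: directly on spin configurations, and after transporting the measure to the equivalent nearest neighbour particle system via the standing up bijection. Since standing up is a measure preserving bijection onto the particle system's stationary measure, the two values of $\mathcal Z$ must coincide, and the asserted identity is precisely this coincidence after rearrangement.

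\emph{Spin side.} By Lemma~\ref{lem:nearestneighbourconcentration} the measure is supported on the countable family of configurations agreeing with the reference configuration outside a finite window; these are step functions with an odd number of domain walls, each of which I decompose into one \emph{base} wall near the origin together with finitely many disjoint intervals of flipped spins. Since $J\equiv 1$ the interaction energy depends only on the number of walls, and the crucial point is that for this homogeneous interaction the inhomogeneous field weight factorises over disjoint site sets into (weight of the base wall)$\times$(weight of the excess intervals). Summing the base wall over its position $m\in\Z$ gives $\sum_{m\in\Z}Q^{m(m+1)/2}z^{m}$, while summing over the excess interval structure gives, independently of $m$, the product $\prod_{i\geq 1}\frac{1+(y-1)Q^{i}}{1-Q^{i}}$, where $y\in(0,1]$ is the interaction parameter; this factor correctly degenerates to $1$ in the maximally attractive limit and to $\prod_{i\geq1}(1-Q^{i})^{-1}$ at $y=1$, where the chain reduces to ASEP. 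Hence $\mathcal Z=\big(\sum_{m\in\Z}Q^{m(m+1)/2}z^{m}\big)\prod_{i\geq 1}\frac{1+(y-1)Q^{i}}{1-Q^{i}}$.

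\emph{Particle side.} Cutting the Ising chain at the origin into a negative and a positive half and applying standing up to each produces a pair of particle configurations, which I record as Frobenius style arrays with row lengths $L,R$ and block sizes $\ell_1,\dots,\ell_L$ and $m_1,\dots,m_R$. Transported through the bijection, the domain wall count turns into the prefactor $y^{L+R}$ together with the interface correction $\big((1-y^{-1})Q^{\ell_1+\cdots+\ell_L}Q^{m_1+\cdots+m_R}+y^{-1}\big)$; the external field turns into the $Q$-exponents in $Z(Q,z,y)$, namely the triangular terms $\tfrac12\ell_j(\ell_j-1)$ from pairs inside a block, the linear terms $j\ell_j$, and the cross terms $\ell_{j-1}(\ell_j+\cdots+\ell_L)$ between consecutive blocks (with $\ell_0=-1$, $m_0=0$); and summing over the free gaps between blocks contributes the geometric factors $(1-Q^{\ell_j+\cdots+\ell_L})^{-1}$, with $z$ tracking the signed particle count. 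Assembling these gives $\mathcal Z=Z(Q,z,y)$, and equating the two expressions for $\mathcal Z$ and moving the infinite product across yields the theorem.

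\emph{Main obstacle.} The conceptual core is the spin side factorisation, which is transparent once one observes that a homogeneous interaction decouples the base wall from the excess intervals. The genuinely delicate work is the particle side accounting: checking that standing up converts the site dependent field and the wall count into exactly the $Q$-exponents and the interface term $(1-y^{-1})Q^{\cdots}+y^{-1}$ written above, handling the interaction at the origin separately, and justifying the rearrangement of the infinite sums, which is legitimate by the absolute convergence furnished by Lemma~\ref{lem:nearestneighbourconcentration}.
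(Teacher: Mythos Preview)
You have identified the two expressions whose equality gives the theorem, but you have swapped their provenance and in doing so left the key step unjustified. In the paper the product $\prod_{i\geq1}\frac{1+(y-1)Q^i}{1-Q^i}$ is \emph{not} obtained by any ``base wall plus excess intervals'' decomposition on the spin side; it is the normalising constant of the stood-up particle system, which for $J\equiv 1$ has a genuine product-form stationary measure $\pi(\omega)\propto\prod_{i\geq1}y^{\1\{\omega_{-i}>0\}}Q^{i\omega_{-i}}$ (Remark~\ref{rem: pi}). The theta sum $\sum_m Q^{m(m+1)/2}z^m$ then enters via the shift-operator identity (Lemma~\ref{lem:shift}) for $\mu^c(\{N=n\})$. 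Equating $\nu^n(\sigma^n)=\pi(\omega^0)$ at the ground states yields equation~\eqref{eq: J=1 iden}, which contains both of these pieces.

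Conversely, what you call the ``particle side''---cutting at the origin and recording runs $\ell_1,\dots,\ell_L,m_1,\dots,m_R$ with their positions---is exactly the paper's direct spin-configuration computation of $Z_{\beta,q,c}$ (Lemma~\ref{lem:Zexpression} and Corollary~\ref{cor:J1expression1}); the standing-up map $T^n$ of Definition~\ref{def: stand up} produces a \emph{single} configuration on $\Z_{<0}$, not a Frobenius pair. So your ``particle side'' is really the spin side, and your ``spin side'' is really the particle side. The substantive gap is that your claimed factorisation---``summing over the excess interval structure gives the product, independently of $m$''---is precisely the nontrivial content supplied by the standing-up bijection and the product measure; asserting it directly on spins is circular unless you exhibit the bijection to a product-weighted space, which is what $T^n$ does.
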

In Section \ref{sec: comb j is 1} we discuss combinatorial interpretations of this identity. Firstly, we look at the constant in $z$, we see a 2 variable generating function for for integer partitions where the power of $Q$ is the number being partitioned and the power of $y$ is the number of different sizes of part in a partition. We then use a Wright bijection argument \cite{wright} to give interpretation to the other $z^m$ terms (for each $m\in\mathbb{Z}$). We see three variable generating functions for Frobenius partitions where the power of $Q$ is the number being partitioned, the power of $y$ counts a quantity for Frobenius partitions equivalent to the number of different sizes of part, and the power of $z$ gives the offset of the Frobenius partition (similar to the identities of Bal\'azs, Fretwell and Jay \cite{MDJ}).  If we take $y=2$ we see that the identity above relates to the generating function of overpartitions and furthermore if we take $y=m$ for any $m\in\mathbb{Z}_{>0}$ we see the generating function of $(m-1)$-overpartitions as discussed above.
\par By combining the identity of Theorem \ref{thm: J is 1 identity} and the Jacobi Triple Product identity we find a surprising product form for the Ising partition function with homogeneous interactions and inhomogeneous external field defined in \eqref{eq:IsingZ}.
\begin{restatable}{cor}{partitionfunc}\label{cor:partn}
    The Ising partition function given by \eqref{eq:IsingZ} (for $J(i)\equiv 1$) can be written as a product, 
    $$Z_{\beta,q,c}=e^{-\beta}\prod\limits_{i=1}^\infty(1+(e^{-2\beta}-1)q^{2i})(1+q^{2(i-c)})(1+q^{2(i-1+c)}).$$
\end{restatable}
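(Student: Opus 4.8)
The plan is to recognise $Z_{\beta,q,c}$ as a single special value of the series $Z(Q,z,y)$ from Theorem~\ref{thm: J is 1 identity}, and then to collapse that series to a product by feeding it through Theorem~\ref{thm: J is 1 identity} together with the Jacobi triple product identity. First I would return to the standing-up correspondence of Section~\ref{sec:Ising}: expanding \eqref{eq:IsingZ} as a sum over the (countable) set of spin configurations of finite interaction energy and applying the standing-up bijection turns it into a sum over particle configurations on $\mathbb{Z}_{<0}$ whose Boltzmann weights factorise exactly into the block structure defining the summand of $Z(Q,z,y)$. Tracking the external-field factors $q^{\mp 2i}$ and the weight of the reference configuration, I expect this to yield an identity of the shape
\[
  Z_{\beta,q,c} \;=\; e^{-\beta}\, Z\!\left(q^{2},\, q^{-2c},\, e^{-2\beta}\right),
\]
with the substitutions $Q=q^{2}\in(0,1)$, $z=q^{-2c}>0$ and $y=e^{-2\beta}\in(0,1]$ falling in the admissible range of Theorem~\ref{thm: J is 1 identity}.

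Granting this, the rest is short. Theorem~\ref{thm: J is 1 identity} rearranges to
\[
  Z(Q,z,y) \;=\; \Bigl(\sum_{m\in\mathbb{Z}} Q^{\frac{m(m+1)}{2}} z^{m}\Bigr)\prod_{i=1}^{\infty}\frac{1+(y-1)Q^{i}}{1-Q^{i}},
\]
and I would then invoke the Jacobi triple product identity in the form
\[
  \sum_{m\in\mathbb{Z}} Q^{\frac{m(m+1)}{2}} z^{m} \;=\; \prod_{i=1}^{\infty}(1-Q^{i})(1+zQ^{i})(1+z^{-1}Q^{i-1}),
\]
so that the $\prod_{i}(1-Q^{i})$ factors cancel and
\[
  Z(Q,z,y) \;=\; \prod_{i=1}^{\infty}\bigl(1+(y-1)Q^{i}\bigr)\bigl(1+zQ^{i}\bigr)\bigl(1+z^{-1}Q^{i-1}\bigr).
\]
Putting $Q=q^{2}$, $z=q^{-2c}$, $y=e^{-2\beta}$ turns the three factors into $1+(e^{-2\beta}-1)q^{2i}$, $1+q^{2(i-c)}$ and $1+q^{2(i-1+c)}$, and multiplying through by $e^{-\beta}$ gives the stated product form for $Z_{\beta,q,c}$.

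The hard part is the first step: making the dictionary between the Ising Boltzmann weights of \eqref{eq:IsingZ} and the block-decomposed summand of $Z(Q,z,y)$ fully precise, in particular nailing the prefactor $e^{-\beta}$ and the exact exponent in the $z$-substitution. Depending on how much of this is already recorded where $Z(Q,z,y)$ is introduced in Section~\ref{sec:Ising}, this step may be essentially immediate (a matter of quoting the normalisation of the relevant blocking measure) or may need a short but careful bookkeeping of boundary contributions; either way, that is where all the model-specific content sits, while Steps~2 and 3 are routine once the two identities are in place. I would also record in passing that the parameter ranges used ($q\in(0,1)$, $\beta\ge 0$, and $c$ within the range for which the field terms are summable) are precisely those under which both Theorem~\ref{thm: J is 1 identity} and the Jacobi triple product identity apply.
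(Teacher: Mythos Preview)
Your proposal is correct and matches the paper's proof almost exactly: apply the Jacobi triple product to the theta-series side of Theorem~\ref{thm: J is 1 identity}, cancel the $\prod_i(1-Q^i)$, and substitute $Q=q^2$, $z=q^{-2c}$, $y=e^{-2\beta}$. The step you flag as ``hard'' --- the identification $Z_{\beta,q,c}=e^{-\beta}Z(q^2,q^{-2c},e^{-2\beta})$ --- is in fact immediate in the paper's setup, since $Z(Q,z,y)$ is \emph{defined} (in the statement of Theorem~\ref{thm: J is 1 identity}) as the run-length expansion that Corollary~\ref{cor:J1expression1} computes for $Z_{\beta,q,c}$, up to the overall factor $y^{1/2}=e^{-\beta}$; no separate standing-up argument is needed at this point.
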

\par \vspace{2mm} Similarly, by considering the inhomogeneous Ising chain with interaction function $J(i)=i$ and equivalent particle systems we prove the following family of identities (one for each $n \in\mathbb{Z}$).
\begin{restatable}{thm}{iiden} \label{thm: J is i identity}
Define, $\Omega\defeq \{\omega \in \mathbb{Z}_{\geq 0}^{\mathbb{Z}_{<0}} : \exists N > 0 \hspace{2mm}\textrm{ s.t } \omega_{-i}=0 \hspace{2mm} \forall i \geq N\}$. For $Q\in(0,1)$, $y\in(0,1]$, and each $n\in\mathbb{Z}$,

$$Z_n(Q,y)=\sum\limits_{z\in\Omega}\prod\limits_{i=1}^\infty y^{\1_{\{z_{-i}>0\}}(n+i-\sum\limits_{j=i}^\infty z_{-j})}Q^{iz_{-i}}.$$
Where, 
\begin{align*}
    &Z_n(Q,y) = 1 + \sum_{L,R>0} y^{nL+(n-1)R}\sum_{\substack{\ell_1,\dots,\ell_L\geq 1 \\ m_1,\dots,m_{R}\geq 1}}\1_{\big\{\sum_{j=1}^R m_j= \sum_{j=1}^L\ell_j\big\}} \prod_{j=1}^{L} \frac{Q^{\tfrac12 \ell_{j}(\ell_{j}-1)}\big(y^{-(L-j+1)}Q^{\ell_{j}+\cdots + \ell_{L}}\big)^{\ell_{j-1}+1}}{1-y^{-(L-j+1)}Q^{\ell_{j}+\cdots + \ell_{L}}} \\
    &\qquad\qquad\qquad\prod_{j=1}^{R}\frac{Q^{\tfrac12 m_{j}(m_{j}-1)}\big(y^{R-j+1}Q^{m_{j}+\cdots+m_{R}}\big)^{m_{j-1}+1}}{1-y^{R-j+1}Q^{m_{j}+\cdots+m_{R}}}\bigg(y^{-n}+\big(1-y^{-n}\big)y^{R-L}Q^{\ell_1+\cdots + \ell_L}Q^{m_{1}+\cdots+m_{R}}\bigg)
\end{align*}   
with $\ell_0=-1$ and $m_0=0$.
\end{restatable}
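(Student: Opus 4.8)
The plan is to obtain the identity by evaluating a single quantity --- the total mass of the Gibbs measure of the $J(i)=i$ chain, restricted to the magnetisation sector indexed by $n$ (equivalently, the normalisation of the reversible measure of the associated Kawasaki dynamics) --- in two ways: directly on the spin side, and after transporting everything to the particle system on $\Z_{<0}$ via the standing-up map. First I would recall from Section \ref{sec:Ising} the Ising chain with interaction $J(i)=i$ and its inhomogeneous external field, and from Lemma \ref{lem:nearestneighbourconcentration} that its Gibbs measure concentrates on the countable set $\fX_n$ of spin configurations differing from the sector-$n$ ground state (the step function whose domain wall is anchored according to $n$) in finitely many sites. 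The standing-up map $\Phi_n$ --- the direct analogue of the exclusion--zero-range correspondence described in the introduction --- is a bijection from $\fX_n$ onto $\Omega$, and I would record that under $\Phi_n$ the Gibbs weight of a spin configuration (the product of its external-field factors and its nearest-neighbour interaction factors) is carried exactly into $\prod_{i\geq 1} y^{\mathbf{1}\{z_{-i}>0\}(n+i-\sum_{j\geq i}z_{-j})}Q^{iz_{-i}}$, where $Q$ and $y$ are the relevant combinations of $q$ and $\beta$. Summed over $z\in\Omega$ this is, by construction, the right-hand side of the identity, so the remaining work is to evaluate the spin-side sum and recognise it as $Z_n(Q,y)$.

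Next I would parameterise $\fX_n$ by its block structure. The ground state contributes the isolated term $1$; any other configuration consists of finitely many alternating runs inserted near the anchor, which can be organised into $R$ runs of lengths $m_1,\dots,m_R\geq 1$ on one side of the anchor and $L$ runs of lengths $\ell_1,\dots,\ell_L\geq 1$ on the other, together with position data encoded by gap variables, where the gap preceding the $j$th run must be at least $\ell_{j-1}+1$ (respectively $m_{j-1}+1$) to keep consecutive runs disjoint --- this is exactly why the conventions $\ell_0=-1$, $m_0=0$ appear. Since Kawasaki dynamics conserve magnetisation, remaining in sector $n$ forces the inserted lengths to balance, $\sum_j \ell_j=\sum_j m_j$, which is the indicator in $Z_n(Q,y)$. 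I would then compute the Gibbs weight of such a configuration: the external field produces the prefactor $y^{nL+(n-1)R}$ and the base powers $(\,\cdot\,)^{\ell_{j-1}+1}$, $(\,\cdot\,)^{m_{j-1}+1}$, while the interaction $-\sum_i i\,\sigma_i\sigma_{i+1}$, being linear in the domain-wall position, yields domain-wall costs quadratic in the cumulative run lengths --- hence the $Q^{\frac12\ell_j(\ell_j-1)}$ and $Q^{\frac12 m_j(m_j-1)}$ factors and the tails $Q^{\ell_j+\cdots+\ell_L}$, $Q^{m_j+\cdots+m_R}$ --- and the two innermost runs can meet the interface in one of two ways, which accounts for the bracketed factor $y^{-n}+(1-y^{-n})y^{R-L}Q^{\ell_1+\cdots+\ell_L}Q^{m_1+\cdots+m_R}$.

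With the weights in hand, each gap variable is summed as a geometric series $\sum_{k\geq\ell_{j-1}+1}x^k=\frac{x^{\ell_{j-1}+1}}{1-x}$ with $x=y^{-(L-j+1)}Q^{\ell_j+\cdots+\ell_L}$ (and the mirrored version with $x=y^{R-j+1}Q^{m_j+\cdots+m_R}$ on the other side); performing all of these collapses the configuration sum over fixed $(L,R)$ and fixed run lengths into precisely the displayed products over $j$. Lemma \ref{lem:nearestneighbourconcentration} guarantees absolute convergence and hence licenses the interchange of sums throughout. Adding back the ground-state term $1$ gives $1+\sum_{L,R>0}(\cdots)=Z_n(Q,y)$, and equating this with the particle-side value $\sum_{z\in\Omega}(\cdots)$ from the first step yields the asserted identity, for every $n\in\Z$.

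The hard part will be the two places where signs and offsets must be tracked with care: (i) pinning down the bijection $\Phi_n$ precisely enough to see that the spin weight really does become $\prod_i y^{\mathbf{1}\{z_{-i}>0\}(n+i-\sum_{j\geq i}z_{-j})}Q^{iz_{-i}}$ --- in particular that the mixed $y$-exponent $n+i-\sum_{j\geq i}z_{-j}$ is exactly what the external field contributes once the configuration is stood up --- and (ii) evaluating the interaction energy of a general alternating-run configuration for $J(i)=i$ so that the quadratic terms $\tfrac12\ell_j(\ell_j-1)$, $\tfrac12 m_j(m_j-1)$, the cross contributions coupling run $j{-}1$ to the tail of run $j$, the prefactor $y^{nL+(n-1)R}$, and the interface bracket all land exactly as written. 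I would check both points on the cases $L+R\leq 2$ before attempting the general computation, since these are where a stray power of $Q$ or $y$, or a shifted summation index, would be easiest to miss.
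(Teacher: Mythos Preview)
Your overall strategy is essentially the paper's: evaluate the sector--$n$ partition function once on the spin side (run decomposition around the domain wall, then sum the gap variables as geometric series) and once on the particle side via the standing--up bijection $T^n:\mathcal B_n\to\Omega$, and equate. The paper organises the spin--side computation slightly differently from what you propose: rather than parameterising $\mathcal B_n$ directly with runs on either side of the anchor at $n$, it first uses the shift operator (Lemma \ref{lem:shift}) to write $\mu^c_J(\{N=n\})=q^{-2nc+n(n+1)}\mu^c_J(\1_{\{N=0\}}e^{-n\beta H})$ with $H$ the homogeneous Hamiltonian, and then does the run decomposition on $\mathcal B_0$ with this tilted weight (Lemma \ref{lem:Zexpression} and Corollary \ref{cor:Jiexpression1}). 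Your direct approach on $\mathcal B_n$ and the paper's shift--then--$\mathcal B_0$ route are equivalent, but the shift localises all the $n$--dependence into the factors $y^{nL+(n-1)R}$ and the interface bracket in one clean step, which you would otherwise have to extract by tracking how the absolute positions of the runs (all offset by $n$) feed through $J(i)=i$.

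One concrete point to correct in your sketch: you have the provenance of the $Q$-- and $y$--powers reversed. In this model the external field $f_c$ is the source of all $Q=q^2$ powers, including the quadratic pieces $Q^{\tfrac12\ell_j(\ell_j-1)}$, $Q^{\tfrac12 m_j(m_j-1)}$ and the tails $Q^{\ell_j+\cdots+\ell_L}$; see the contribution $q^{2(\frac12 m_i(m_i-1)-cm_i+s_im_i)}$ to $f_c$ recorded in the proof of Lemma \ref{lem:Zexpression}. The $y=e^{-2\beta}$ powers, including the prefactor $y^{nL+(n-1)R}$ and the mixed exponent $n+i-\sum_{j\geq i}z_{-j}$ on the particle side, come entirely from the interaction $e^{-\beta H_J}$ (cf.\ Remark \ref{rem: pi} and the computation of $A_R^{(n)},B_L^{(n)}$ in the proof of Corollary \ref{cor:Jiexpression1}). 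So your item (ii) should read: the \emph{external field} yields the quadratic $Q$--exponents, while the \emph{interaction} $J(i)=i$ produces the $y$--powers linear in the domain--wall positions; and in item (i) it is the Hamiltonian, not the field, whose stood--up form is $y^{\1\{z_{-i}>0\}(n+i-\sum_{j\geq i}z_{-j})}$. With that swap made, your plan goes through.
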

In Section \ref{sec: comb j is i} we discuss the combinatorial interpretations of this family of identities. For any $n\in\mathbb{Z}$, we see that the right side of the identity is a 2 variable generating function for integer partitions where the power of $Q$ is the number being partitioned and the power of $y$ can be split into three parts counting, roughly speaking:
\begin{itemize}
    \item The number of different sizes of part (as in Theorem \ref{thm: J is 1 identity}).
    \item The ``minimal" integer that can be partitioned given the part sizes (by taking exactly one part of each size).
    \item What we refer to as the ``partition of partial sums". Suppose we know how many parts there are of each of the $k$ sizes in the order of the sizes (but not the actual values of the sizes). Then the ``partition of partial sums" is given by:
    \begin{itemize}
        \item a part equal to the total number of parts of the original partitions,
        \item a part equal to the number of parts from the $k-1$ biggest sizes,
        \\ \vdots
        \item a part equal to the number of parts of the largest size.
    \end{itemize}
\end{itemize}
The fact that the $y$ power is a combination of three different properties of a partition, suggests it might be more natural for this to be written as a three or four variable generating function for integer partitions. This suggests that there may be a more general (reversible) process than the inhomogeneous Ising we have considered that naturally has a three or four variable stationary measure, whose partition function is the three or four variable version of this generating function.

\section*{Acknowledgements}
\par \vspace{1mm} \noindent The authors would like to thank Dan Fretwell and also Robert Osburn for useful discussions about combinatorial interpretations. We also thank Matthew Aldridge for a very nice suggestion that resulted in section \ref{sec:nicelong}.
\par \noindent J.\ Jay was funded by the Heilbronn Institute for Mathematical Research.

\section{An Asymmetric Blocking Ising process under Kawasaki dynamics}\label{sec:Ising}

We consider an Ising model with nearest-neighbour inhomogeneous coupling constants and external field.  States of this model are evolved under Kawasaki dynamics. The measure of our model is constructed so that there is concentration on blocking configurations of $\mathbb{Z}$, that is, on configurations $\mathcal{B}\subset \Omega^{\Is}:=\{-1,+1\}^{\mathbb{Z}}$ consisting of only $+1$ (or positive) spins to the right of some finite vertex $b$ and only $-1$ (or negative) spins to the left of some finite vertex $a$. Note that, necessarily, $a\leq b$. More precisely, 
\begin{equation}\label{eq:defB}
\mathcal{B}:=\{\sigma\in\Omega^{\Is}\, :\, \exists a,b\in\mathbb{Z}\text{ s.t. }\forall i\in\mathbb{N}\,\,\, \sigma_{a-i}=-1,\, \sigma_{b+i}=+1\}.
\end{equation}
This space is countable.
Each state in $\sigma\in\Omega^{\Is}$ has an associated energy, given by the hamiltonian 
\begin{equation}\label{eq:ham}
H_{J}(\sigma)=-\sum\limits_{i\in\mathbb{Z}}J(i)\frac{\sigma_i\sigma_{i+1}-1}{2} = \sum_{i\in\Z}J(i)\1_{\{\sigma_i\neq\sigma_{i+1}\}},
\end{equation}
where $J:\Z\to\R$ is some real valued function. Notice that, if the sign of $J(i)$ is constant, $|H_{J}(\sigma)|=\infty$ unless $\sigma$ has only finitely many disagreements of neighbouring spins. Each $\sigma\in\Bcal$ has only finitely many disagreements of neighbouring spins and we will see that our measure defined below in \eqref{eq:Ising measure} concentrates on $\Bcal$. 
\par The states evolve by neighbouring pairs of disagreeing spins swapping, when this happens the total energy of the system can change. The rate at which the spins swap depends on how the swap affects energy, which depends on four spins in total. Before we introduce the correct rates to satisfy detailed balance, we introduce an inhomogeneous external field so that our model will concentrate on $\Bcal$, where $H_{J}(\sigma)$ is finite. This external field is motivated by the study of the Asymmetric Simple Exclusion Process (ASEP) and will have the effect that when the energy term is removed the process will be precisely ASEP. We may hence want to consider this field as introducing an asymmetry in the rates of spin exchange which, energy change aside, favours spins with value +1 moving to the right (i.e. in the direction $i\rightarrow i+1$ for $i\in \mathbb{Z}$).

\par Any swap that causes a positive spin to move left will happen at a rate determined from the hamiltonian multiplied by $q\in(0,1)$ and swaps where a positive spin moves right by $q^{-1}$. To achieve this we introduce a function $f_c:\mathcal{B}\to \mathbb{Z}$, $c\in\mathbb{R}$, given by
\begin{equation}
f_c(\sigma)=\sum_{i=1}^{\infty}(i-c)(1-\sigma_i) - \sum_{i=-\infty}^0 (i-c)(1+\sigma_i).
\end{equation}
Note that 
\begin{equation}\label{eq:fc}
f_c(\sigma)=2\sum_{i=1}^\infty(i-c)\1_{\{\sigma_i=-1\}}-2\sum_{i=-\infty}^0(i-c)\1_{\{\sigma_i=1\}}.
\end{equation}
Now for $c\in \mathbb{R}$, $\beta\geq0,$ and $q\in(0,1)$ we define a probability measure $\mu^c_{J, \beta,q}=\mu^c_J$ on $\Omega^\Is$ as
\begin{equation}\label{eq:Ising measure}
\mu^c_J(\sigma) = \frac{1}{Z^{J}_{\beta,q,c}} e^{-\beta H_{J}(\sigma)}q^{f_c(\sigma)},
\end{equation}
where 
\begin{equation}\label{eq:IsingZ}
Z^{J}_{\beta,q,c} = \sum_{\sigma\in\Omega^{\Is}} e^{-\beta H_{J}(\sigma)}q^{f_c(\sigma)}.
\end{equation}
\begin{rem}
    We note that when we take $\beta=0$, for any interaction function $J$, we indeed see ASEP. Moreover this gives us that,
    $$Z_{0,q,c}^J=Z_\text{ASEP}(q,c)=\prod\limits_{i=1}^\infty(1+q^{2(i+c)})(1+q^{2(i-1+c)}).$$
    We will further study the Ising partition function, for any $\beta$, in Section \ref{sec: ising partitions function}. 
\end{rem}
\par In order to distinguish between the factors of $e^{-\beta}$ and $q$ in $\mu^c_J(\sigma)$ we will refer to the factor of $e^{-\beta}$ as the \emph{energy} term and call conditions or configurations energetically favourable or lower energy if they have a smaller value of $H_J(\sigma)$ than the condition or configuration they are being compared with.

The required swap rates for the dynamics are given by the detailed balance condition. If $w(\sigma,\sigma^{\prime})$ is the transition rate from state $\sigma$ to $\sigma^{\prime}$ then $w(\sigma,\sigma^{\prime})=0$ unless $\sigma$ and $\sigma^{\prime}$ differ by a single swap of neighbouring spins. If they do differ by a single swap of neighbouring spins then $w(\sigma,\sigma^{\prime})$ depends on properties of $\sigma$ and $\sigma^\prime$. The first property that determines swap rates is how the number and positions of neighbouring spin disagreements changes. The cases where the  number of disagreements increases, decreases, and stays the same (but possibly change location) will be called disagreement increasing, disagreement decreasing, and disagreement neutral, respectively. These types of swaps are usually called energy increasing/decreasing/neutral in the case of no, or homogeneous, external field. The second property is whether a positive spin moves to the left, or to the right, when transitioning from $\sigma$ to $\sigma^\prime$. In each case below we suppose that the proposed swap is of differing spins $\sigma_i$ and $\sigma_{i+1}$.

\begin{itemize}
\item \textbf{Disagreement increasing:} 
    \begin{itemize}
     \item A swap that is disagreement increasing and such that positive spin moves left, for example $--++\to -+-+$, happens at rate 
     $$
    w(\sigma,\sigma^{\prime})=\frac{1}{2}\Big(1-\tanh\big(\beta\tfrac{J(i-1)+J(i+1)}{2}\big)\Big)\cdot q.
    $$
     \item A swap that is disagreement increasing and such that a positive spin moves right, for example $++--\to+-+-$, happens at rate 
     $$
     w(\sigma,\sigma^{\prime})=\frac{1}{2}\Big(1-\tanh\big(\beta\tfrac{J(i-1)+J(i+1)}{2}\big)\Big)\cdot q^{-1}.
     $$
    \end{itemize}
    \item \textbf{Disagreement Neutral:} 
    \begin{itemize}
    \item \par  A swap involving two agreeing spins is energy neutral, we say that these happen at rate $0$ (we can give any rate to this swap since the measure of the configuration is unchanged).
    \item A swap that is disagreement neutral and such that a positive spin moves left, for example $--+-\to -+--$ or $+-++\to ++-+$, happens at rate
     $$
    w(\sigma,\sigma^{\prime})= \frac{1}{2}\Big(1+\tanh\big(\beta\tfrac{J(i+1)-J(i-1)}{2}\big)\Big)q.
    $$
    \item A swap that is disagreement neutral and such that a positive spin moves right, for example  $-+--\to --+-$ or $++-+\to +-++$, happens at rate 
    $$
    w(\sigma,\sigma^{\prime})= \frac{1}{2}\Big(1-\tanh\big(\beta\tfrac{J(i+1)-J(i-1)}{2}\big)\Big)q^{-1}.
    $$
     \end{itemize}
\item \textbf{Disagreement decreasing:} 
    \begin{itemize}
    \item A swap that is disagreement decreasing and such that a positive spin moves left, for example $+-+-\to ++--$, happens at rate 
    $$
    w(\sigma,\sigma^{\prime})=\frac{1}{2}\Big(1+\tanh\big(\beta\tfrac{J(i-1)+J(i+1)}{2}\big)\Big)\cdot q.
    $$
    \item A swap that is disagreement decreasing and such that a positive spin moves right, for example $-+-+\to --++$, happens at rate
    $$
    w(\sigma,\sigma^{\prime})=\frac{1}{2}\Big(1+\tanh\big(\beta\tfrac{J(i-1)+J(i+1)}{2}\big)\Big)\cdot q^{-1}.
    $$
    \end{itemize}
\end{itemize}

\begin{lem}\label{lem:nearestneighbourconcentration}
For every $c\in\mathbb{R}$, $\beta\geq 0$, and $q\in(0,1)$, the measure $\mu^c_{J}$ is stationary and reversible for the Kawasaki dynamics described above with rates given by  $w(\sigma,\sigma^{\prime})$ provided that it concentrates on $\mathcal{B}$. Moreover, $\mu^c_{J}$ concentrates on $\mathcal{B}$ if 
$$
\sum_{i=-\infty}^0 \left(1+q^{2(i-c)}\frac{e^{\beta J(i-1)}}{e^{\beta |J(i)|}+q^2 e^{\beta |J(i-2)|}}\right)^{-1} + \sum_{i=1}^\infty \left(1+q^{-2(i-c)}\frac{e^{\beta J(i-1)}}{e^{\beta |J(i)|}+q^{-2} e^{\beta |J(i-2)|}}\right)^{-1}<\infty
$$
\end{lem}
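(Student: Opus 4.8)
The plan is to split the lemma into two parts: (i) given that $\mu^c_J$ concentrates on $\Bcal$, verify stationarity and reversibility of the Kawasaki dynamics; and (ii) establish the stated summability condition as a sufficient criterion for concentration on $\Bcal$.

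For part (i), the natural route is to check detailed balance directly. Since $\Bcal$ is countable and every transition changes $\sigma$ by a single nearest-neighbour swap of disagreeing spins, it suffices to show $\mu^c_J(\sigma)\,w(\sigma,\sigma')=\mu^c_J(\sigma')\,w(\sigma',\sigma)$ for each such pair, after which stationarity follows by summing over $\sigma'$ (the sum is over finitely many neighbours of each $\sigma$, so there is no convergence subtlety once concentration on $\Bcal$ is known). The computation of the ratio $\mu^c_J(\sigma')/\mu^c_J(\sigma)=e^{-\beta(H_J(\sigma')-H_J(\sigma))}q^{f_c(\sigma')-f_c(\sigma)}$ is a routine case check matching the three families of rates (disagreement increasing / neutral / decreasing, each subdivided by whether a $+1$ moves left or right): in each case $H_J(\sigma')-H_J(\sigma)$ is read off from \eqref{eq:ham} in terms of $J(i-1),J(i),J(i+1),J(i+2)$ restricted to the four affected bonds, $f_c(\sigma')-f_c(\sigma)=\pm 2$ by \eqref{eq:fc} according to whether the positive spin moves right or left, and one uses the elementary identity $\tfrac12(1\pm\tanh x)=e^{\pm x}/(e^x+e^{-x})$ to see that the ratio of the two rates in a forward/backward pair equals exactly $e^{-\beta\Delta H}q^{\Delta f}$. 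I would present one representative case in full (say $--++\to-+-+$ against its reverse) and assert the rest are analogous.

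For part (ii), the goal is to show $\mu^c_J(\Omega^\Is\setminus\Bcal)=0$, equivalently $\mu^c_J(\Bcal)=1$. Write $\Bcal^c=\bigcup_{i\le 0}A_i^- \cup \bigcup_{i\ge 1}A_i^+$ where $A_i^-=\{\sigma_i=+1\}$ for $i\le 0$ and $A_i^+=\{\sigma_i=-1\}$ for $i\ge 1$ — a configuration fails to be a blocking configuration precisely if it has a $+1$ at arbitrarily negative sites or a $-1$ at arbitrarily positive sites, i.e. lies in infinitely many of these events. By Borel–Cantelli (or just monotone-convergence/union bound), it suffices to show $\sum_{i\le 0}\mu^c_J(\sigma_i=+1)+\sum_{i\ge 1}\mu^c_J(\sigma_i=-1)<\infty$, and then the stated bound comes from estimating each single-site marginal. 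To bound $\mu^c_J(\sigma_i=+1)$ for $i\le 0$, I would use a local flip argument: compare $\sigma$ with the configuration $\sigma^{(i)}$ obtained by flipping the spin at $i$ from $+1$ to $-1$. This changes $f_c$ by a factor $q^{2(i-c)}$ (making the flipped configuration much more likely, since $i-c$ is very negative so $q^{2(i-c)}$ is huge) and changes $H_J$ only through the bonds $\{i-1,i\}$ and $\{i,i+1\}$, contributing a bounded multiplicative energy distortion controlled by $e^{\beta|J(i-1)|}$, $e^{\beta|J(i)|}$, $e^{\beta|J(i-2)|}$ etc.; summing the resulting ratio over the two possible spin values at a neighbouring site (to get a clean comparison independent of the rest of the configuration) produces exactly the factor $\left(1+q^{2(i-c)}\frac{e^{\beta J(i-1)}}{e^{\beta|J(i)|}+q^2e^{\beta|J(i-2)|}}\right)^{-1}$ as an upper bound for $\mu^c_J(\sigma_i=+1)$, and symmetrically for the $i\ge 1$ sum. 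Hence if the displayed series converges, $\mu^c_J(\Bcal)=1$.

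The main obstacle is getting the single-site marginal bound in part (ii) with the precise denominator stated, rather than some cruder bound: one must be careful that the comparison map $\sigma\mapsto\sigma^{(i)}$ is a bijection on the relevant sub-event, that the energy ratio is bounded *uniformly* over the configuration outside a small window around $i$ (which forces one to sum over the $\le 2$ possible values of the neighbouring spins $\sigma_{i-1},\sigma_{i+1}$ to absorb the worst case), and that the $q^2$ versus $q^{-2}$ asymmetry in the two denominators correctly reflects that positive spins at $i\le 0$ versus negative spins at $i\ge 1$ see fields $q^{2(i-c)}$ versus $q^{-2(i-c)}$. Everything else — the detailed-balance case check, the Borel–Cantelli step — is routine, so I would keep those brief and concentrate the exposition on this marginal estimate.
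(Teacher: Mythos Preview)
Your plan for part (i), checking detailed balance case by case, is exactly what the paper does (and indeed the paper leaves the computation to the reader). No issue there.

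For part (ii) the overall Borel--Cantelli strategy is also what the paper uses, but your choice of events and your flip argument do not reproduce the summand stated in the lemma. You propose to bound the single-site marginals $\mu^c_J(\sigma_i=+1)$ for $i\le 0$ (and symmetrically for $i\ge 1$) by flipping the spin at $i$. A flip at $i$ only touches the bonds $(i-1,i)$ and $(i,i+1)$, so the energy change involves $J(i-1)$ and $J(i)$ alone; taking worst case over the neighbouring spins gives a bound of the shape $\bigl(1+q^{2(i-c)}e^{-\beta|J(i-1)|-\beta|J(i)|}\bigr)^{-1}$. That is a valid sufficient condition for concentration, but it is \emph{not} the condition stated in the lemma, whose summand contains $e^{\beta J(i-1)}$ (no absolute value) in the numerator and $e^{\beta|J(i-2)|}$ in the denominator. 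The appearance of $J(i-2)$ is the giveaway: it can only enter through the bond $(i-2,i-1)$, i.e.\ through a flip at site $i-1$, which your argument never performs.

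The paper instead bounds the \emph{disagreement} events $\mathcal I_i=\{\sigma_i\neq\sigma_{i-1}\}$. It splits the weight at the bond $(i-1,i)$ into partial partition functions $S^{(i)}(\pm1)$ (over $\{\dots,i-2,i-1\}$, conditioned on $\sigma_{i-1}$) and $T^{(i)}(\pm1)$ (over $\{i,i+1,\dots\}$, conditioned on $\sigma_i$), writes $\mu^c_J(\mathcal I_i)$ as a ratio in these four quantities, and then bounds $T^{(i)}(+1)/T^{(i)}(-1)$ by a flip at $i$ (producing the $e^{\beta|J(i)|}$ term) and $S^{(i)}(+1)/S^{(i)}(-1)$ by a flip at $i-1$ (producing the $q^2 e^{\beta|J(i-2)|}$ term). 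That is where the exact denominator $e^{\beta|J(i)|}+q^2 e^{\beta|J(i-2)|}$ and the numerator $e^{\beta J(i-1)}$ come from. To prove the lemma as stated you need this factorization step; your single-flip comparison at $i$ is too coarse to hit the precise sufficient condition.
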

\begin{rem}
The summability condition above imposes only rather mild conditions on $J(i)$, for example $J(i)=a|i|+b$ satisfies the condition, and if $-log(q)>\beta$, then $J(i)=\pm i$ also satisfies the condition.

In the proof we will show that there are a.s. only finitely many disagreements of neighbouring spins using a Borel-Cantelli argument, similar to Bal\'azs and Bowen \cite{blocking}. Concentration on $\Bcal$ (rather than one of the other possibilities for spin values far to the left or right of $0$) is then immediate from the definition of $f_c$. 
\end{rem}

\begin{proof}
The rates of spin exchanges were chosen specifically to satisfy detailed balance as a straightforward calculation will verify. We leave this calculation to the interested reader.

Now to show that $\mu^c_J$ concentrates on $\mathcal{B}$ we first show that there are a.s. only finitely many pairs of neighbouring spins that differ under $\mu^c_j$. Concentration on $\mathcal{B}$ is then immediate as $f_c(\sigma)<\infty$ for any $\sigma\in\Bcal$ but having infinitely many $\sigma_i=+1$ for $i\leq 0$ or infinitely many $\sigma_i=-1$ for $i\geq 1$ results in $f_c(\sigma)=\infty$.

For $i<0$ define
$$
\mathcal{I}_i=\{\sigma\in\Omega^{\Is}\,:\,\sigma_i\neq\sigma_{i-1}\}
$$
to be the event that the spins at $i$ and $i-1$ differ. For $\Omega^{< i}=\{\pm1\}^{\{\dots,i-2,i-1\}}$ and $\Omega^{\geq i}=\{\pm1\}^{\{i,i+1,\dots\}}$ define
$$
S^{(i)}(\pm1)=\sum_{\sigma\in\Omega^{< i}}e^{-\beta H_{J}^{< i}(\sigma)}q^{f^{<i}_c(\sigma)}\mathbbm{1}_{\{\sigma_{i-1}=\pm1\}},\quad T^{(i)}(\pm1)=\sum_{\sigma\in\Omega^{\geq i}}e^{-\beta H_{J}^{\geq i}(\sigma)}q^{f^{\geq i}_c(\sigma)}\mathbbm{1}_{\{\sigma_i=\pm1\}}
$$
where $H_{J}^{< i}=\sum_{j<i-1}J(j)\1_{\{\sigma_j\neq\sigma_{j+1}\}}$, $H_{J}^{\geq i}=\sum_{j\geq i}J(j)\1_{\{\sigma_j\neq\sigma_{j+1}\}}$, $f^{<i}_c(\sigma)= -\sum_{j=-\infty}^{i-1} (j-c)(1+\sigma_j)$ and $f^{<i}_c(\sigma)+f^{\geq i}_c(\sigma)=f_c(\sigma)$.

\par We have that
$$
\sum_{\sigma\in\mathcal{I}_i}e^{-\beta H_{J}(\sigma)}q^{f_c(\sigma)}=e^{-\beta J(i-1)}\bigg(S^{(i)}(-1)T^{(i)}(1)+S^{(i)}(1)T^{(i)}(-1)\bigg)
$$
and similarly
$$
\sum_{\sigma\in\Omega^{\Is}}e^{-\beta H_{J}(\sigma)}q^{f_c(\sigma)}=e^{-\beta J(i-1)}\bigg(S^{(i)}(-1)T^{(i)}(1)+S^{(i)}(1)T^{(i)}(-1)\bigg)+\bigg(S^{(i)}(1)T^{(i)}(1)+S^{(i)}(-1)T^{(i)}(-1)\bigg).
$$
This means that
\begin{align*}
\mu^c_J(\mathcal{I}_i)&=\left(1+e^{\beta J(i-1)}\frac{S^{(i)}(1)T^{(i)}(1)+S^{(i)}(-1)T^{(i)}(-1)}{S^{(i)}(-1)T^{(i)}(1)+S^{(i)}(1)T^{(i)}(-1)}\right)^{-1}  
\\
&\leq \left(1+e^{\beta J(i-1)}\left(\frac{T^{(i)}(1)}{T^{(i)}(-1)}+\frac{S^{(i)}(1)}{S^{(i)}(-1)}\right)^{-1}\right)^{-1}  
\end{align*}
where the inequality arises from throwing away the term $S^{(i)}(1)T^{(i)}(1)$.
We hence see that we require an upper bound on $T^{(i)}(1)$ and $S^{(i)}(1)$. Fortunately, simple bounds are
$$
T^{(i)}(1)=\sum_{\sigma\in\Omega^{\geq i}}e^{-\beta H_{J}^{\geq i}(\sigma)}q^{f^{\geq i}_c(\sigma)}\mathbbm{1}_{\{\sigma_i=-1\}}q^{-2(i-c)}\Big(\mathbbm{1}_{\{\sigma_{i+1}=1\}}e^{\beta J(i)}+\mathbbm{1}_{\{\sigma_{i+1}=-1\}} e^{-\beta J(i)}\Big) \leq T^{(i)}(-1)q^{-2(i-c)}e^{\beta |J(i)|},
$$
where the equality used a simple map to flip the spin of $\sigma_i$ and then inserted the correct factors to give $H_{J}(\sigma)$ and $f_c(\sigma)$ for the un-flipped configuration. Similarly we find
$$
S^{(i)}(1)\leq S^{(i)}(-1)q^{-2(i-1-c)}e^{\beta |J(i-2)|}.
$$
Combining these bounds gives that
$$
\mu^c_J(\mathcal{I}_i)\leq \left(1+q^{2(i-c)}\frac{e^{\beta J(i-1)}}{e^{\beta |J(i)|}+q^2 e^{\beta |J(i-2)|}}\right)^{-1}.
$$
Hence by Borel-Cantelli, there will be a.s. only finitely many disagreements of spins in $\{\dots,-2,-1,0\}$ if this quantity is summable over $i\leq 0$.
\par
Similarly, if
$$
\sum_{i\geq 1}\left(1+q^{-2(i-c)}\frac{e^{\beta J(i-1)}}{e^{\beta |J(i)|}+q^{-2} e^{\beta |J(i-2)|}}\right)^{-1}<\infty
$$
then there are only finitely many disagreements of spins in $\{1,2,\dots\}$.
\end{proof}

From now on, we will assume that the summability condition in Lemma \ref{lem:nearestneighbourconcentration} holds, and therefore that $\mu^c_{J}$ concentrates on $\mathcal{B}$. Some choices of $J(i)$ are of particular interest to us.

\begin{rem} We limit ourselves to two $J(i)$ of interest.
\begin{enumerate}
\item $J(i)\equiv 1$. This is the simplest case and perhaps physically the most natural. Spins find agreement with neighbours energetically favourable. In this case we will drop the subscript and superscript $J$ from the notation.

\item $J(i)=i$. This case is inhomogeneous, spins to the right of 1 find agreement energetically favourable and spins to the left of 0 find disagreement with their neighbours energetically favourable. In order to satisfy the condition of Lemma \ref{lem:nearestneighbourconcentration} we require that $e^{-\beta}<q$. 

\end{enumerate}    
\end{rem}

\subsection{The conserved quantity for Kawasaki dynamics on blocking configurations}
\par

Recall that on a finite graph Kawasaki dynamics preserve the number of +1 spins (or equivalently the magnetism) of the configuration.
As we are working on an infinite chain, and in particular on $\Bcal$, the number of +1 spins (and -1 spins) is infinite, so the number of +1 spins is not a conserved quantity that is available to us. Instead, the blocking configurations suggest another, rather natural, conserved quantity. For some $\ell,r$ satisfying $-\infty\leq \ell\leq 0 < r\leq \infty$ and $\sigma\in\Omega^{\Is}$ define
$$
N_+^{\ell}(\sigma)=\sum_{i=\ell}^0\mathbbm{1}_{\{\sigma_i=+1\}}, \qquad N_-^{r}(\sigma)=\sum_{i=1}^r\mathbbm{1}_{\{\sigma_i=-1\}}
$$
to be the number of +1 (resp. -1) spins on $\{\ell,\ell+1\dots,0\}$ (resp. $\{1,2,\dots,r\}$). Notice that we have
$$
\mathcal{B}=\bigcup_{P,M\in \mathbb{N}}\bigcap_{\ell\leq 0,r>0} \big\{\sigma\in\Omega^{\Is}\,:\, N_+^{\ell}(\sigma)\leq P,\, N_-^r(\sigma)\leq M\big\},
$$
and hence that $\Bcal$ is countable.
We further define
$$
N_+(\sigma)=\lim_{\ell\to\infty}N_+^{\ell}(\sigma),\qquad N_-(\sigma)=\lim_{r\to\infty}N_-^{r}(\sigma).
$$
If $\sigma\in\mathcal{B}$ then both of these limits exist and are finite. Using these quantities we define a conserved quantity. For $\sigma\in\mathcal{B}$
\begin{equation}\label{def:N}
N(\sigma):=N_-(\sigma)-N_+(\sigma).
\end{equation}
This is indeed a conserved quantity for Kawasaki dynamics on $\mathcal{B}$, and so we can decompose $\mathcal{B}$ into disjoint subsets according to the value of $N(\sigma)$. For $n\in\mathbb{N}$
\begin{equation}\label{def:Bn}
\mathcal{B}_n:=\big\{\sigma\in\mathcal{B}\,:\, N(\sigma)=n\big\}.
\end{equation}
This gives us the following proposition.
\begin{prop}\label{stationary dist on B^n}
The unique stationary distribution on $\mathcal{B}_n$ is given by, 
$$\nu^n_{J,\beta,q}(\sigma)\defeq \mu^c_{J,\beta,q}(\sigma|N(\sigma)=n)= \frac{\mu^c_J(\sigma)\1_{\{N(\sigma)=n\}}}{\mu^c_J(\{N=n\})}.$$
\end{prop}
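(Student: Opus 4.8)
The plan is to derive the proposition from two things established above: that $\mu^c_J$ is a reversible probability measure for the Kawasaki dynamics which concentrates on $\mathcal{B}$ (Lemma~\ref{lem:nearestneighbourconcentration}), and that $N(\sigma)$ is conserved by the dynamics on $\mathcal{B}$. Once these are in hand, the content of the proposition is (i) that conditioning a reversible measure on a level set of a conserved quantity gives a reversible --- hence stationary --- measure there, which is essentially bookkeeping, and (ii) uniqueness, which will come from irreducibility of the dynamics restricted to the single class $\mathcal{B}_n$ together with the standard fact that an irreducible continuous-time Markov chain on a countable state space admits at most one stationary probability distribution.

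First I would make the restriction precise. Since $N$ is conserved, any transition with $w(\sigma,\sigma')>0$ and $\sigma\in\mathcal{B}_n$ has $\sigma'\in\mathcal{B}_n$, so the dynamics genuinely restricts to $\mathcal{B}_n$. I would then check $\mu^c_J(\{N=n\})>0$, so that $\nu^n_{J,\beta,q}$ is a bona fide probability measure on $\mathcal{B}_n$: this is witnessed by the single-domain-wall configuration $\sigma^{(n)}$ defined by $\sigma^{(n)}_i=-1$ for $i\le n$ and $\sigma^{(n)}_i=+1$ for $i>n$, which lies in $\mathcal{B}_n$ (one checks $N(\sigma^{(n)})=n$) and has $H_J(\sigma^{(n)})=J(n)$ and $f_c(\sigma^{(n)})$ both finite, hence positive $\mu^c_J$-mass. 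Reversibility of $\nu^n_{J,\beta,q}$ for the restricted chain is then immediate: dividing the detailed-balance identities $\mu^c_J(\sigma)w(\sigma,\sigma')=\mu^c_J(\sigma')w(\sigma',\sigma)$ (valid on all of $\mathcal{B}$ by Lemma~\ref{lem:nearestneighbourconcentration}) by the positive constant $\mu^c_J(\{N=n\})$ yields detailed balance for $\nu^n_{J,\beta,q}$ on $\mathcal{B}_n$, and since no probability flows between $\mathcal{B}_n$ and its complement, checking stationarity on $\mathcal{B}_n$ produces no leftover terms.

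The crux --- and the step I expect to be the main obstacle --- is irreducibility of the chain on $\mathcal{B}_n$, i.e.\ connectivity of the graph on $\mathcal{B}_n$ whose edges join configurations differing by a single swap of adjacent disagreeing spins. Two ingredients are needed. First, every such swap has strictly positive rate: inspecting the rates listed above, each disagreement-increasing, disagreement-neutral and disagreement-decreasing swap has rate of the form $\tfrac12(1\pm\tanh(\beta\,\cdot\,))\,q^{\pm1}$, which is positive because $|\tanh|<1$ and $q\in(0,1)$. Second, connectivity: given $\sigma\in\mathcal{B}_n$, identify $-1\leftrightarrow0$ and $+1\leftrightarrow1$; outside a finite window $\sigma$ is constant (all $0$ to the left, all $1$ to the right), so on the window it is a finite binary word, which I would sort by a bubble sort --- repeatedly swap an adjacent pair appearing as $(+1,-1)$, each such swap being a legal transition that strictly decreases the number of such inversions. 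After finitely many steps one reaches the sorted word, a single-domain-wall configuration, hence $\sigma^{(m)}$ for some $m$; since every move preserves $N$ and the one-domain-wall configurations $\{\sigma^{(m)}:m\in\mathbb{Z}\}$ have pairwise distinct $N$-values, necessarily $m=n$. Thus every $\sigma\in\mathcal{B}_n$ connects to $\sigma^{(n)}$, so $\mathcal{B}_n$ is connected. The points requiring mild care here are that the sorting never leaves $\mathcal{B}_n$ (finitely many swaps of a blocking configuration stay blocking, and $N$ is invariant) and that the terminal state is exactly the advertised ground state $\sigma^{(n)}$. Finally, I would note that the total jump rate out of a configuration with $k$ neighbouring disagreements is at most a constant times $k$, so the restricted process is a well-defined non-explosive Markov chain, and then conclude uniqueness of $\nu^n_{J,\beta,q}$ from irreducibility.
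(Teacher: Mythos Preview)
Your argument is correct. The paper itself states this proposition without proof, treating it as an immediate consequence of the preceding setup (reversibility of $\mu^c_J$ from Lemma~\ref{lem:nearestneighbourconcentration} together with the observation that $N$ is conserved). Your write-up is therefore considerably more detailed than what the paper provides: in particular, the explicit irreducibility argument via bubble-sorting to the ground state $\sigma^{(n)}$, the verification that $\mu^c_J(\{N=n\})>0$, and the non-explosion check are all things the paper leaves implicit. None of these steps is wrong, and together they constitute a complete proof of both existence and uniqueness.
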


\subsection{Expression for $\mu^c_J(\{N=n\})$ using the shift operator}~
\par 

We now look at the expression $\mu^c_J(\{N=n\})$ in $\nu^n_{J,\beta,q}(\sigma)$. By deriving a convenient expression for this quantity we can obtain useful expressions for our identities in Theorems \ref{thm: J is 1 identity} and \ref{thm: J is i identity}.
\begin{lem}\label{lem:shift}
Take $c\in \R$, $\beta\geq0$, $q\in(0,1)$, and $J:\Z\to\R$ satisfying the conditions of Lemma \ref{lem:nearestneighbourconcentration}. For any $n\in \Z$ we have
$$
\mu_{J}^c(\{N=n\}) = q^{-2nc + n(n+1)}\mu_{J}^c\Big(\1_{\{N=0\}}e^{-\beta H^{(n)}_{J}}\Big),
$$
where 
$$
H_{J}^{(n)}(\sigma) = \sum\limits_{i\in\mathbb Z} \big(J(i+n) - J(i)\big)\1 _{\{\sigma_{i}\neq \sigma_{i+1}\}}.
$$
\end{lem}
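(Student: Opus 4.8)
The plan is to construct an explicit bijection between configurations with $N(\sigma)=n$ and configurations with $N(\sigma)=0$ by shifting, and then track how the two factors $e^{-\beta H_J}$ and $q^{f_c}$ transform under that shift. Concretely, for $\sigma\in\mathcal{B}_n$ (assume $n\geq 0$; the case $n<0$ is symmetric) define $T_n\sigma$ to be the configuration obtained by deleting $n$ appropriately placed spins — or, more cleanly, by the ``standing up''-type shift that removes an excess of $n$ negative spins on the right block. The map I would actually use is the one induced on the particle picture: since $N(\sigma)=N_-(\sigma)-N_+(\sigma)$ counts the signed surplus, shifting the entire right-hand block of the configuration left by $n$ sites (equivalently inserting $n$ extra $+1$'s just after the origin, or deleting $n$ $-1$'s) gives a bijection $\mathcal{B}_n\to\mathcal{B}_0$. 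The key point is that this is a clean measure-preserving-up-to-an-explicit-factor bijection, so the identity reduces to computing that factor.

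**First** I would pin down the bijection precisely and verify it is a bijection $\mathcal{B}_n\to\mathcal{B}_0$ (injectivity and surjectivity are both immediate once one writes the inverse as the opposite shift). **Next**, I would compute the change in $f_c$. Because $f_c(\sigma)=2\sum_{i\geq 1}(i-c)\1_{\{\sigma_i=-1\}}-2\sum_{i\leq 0}(i-c)\1_{\{\sigma_i=+1\}}$ from \eqref{eq:fc}, a rigid shift of a block by $n$ sites changes each term $(i-c)$ to $(i-c) \pm n$ for finitely many relevant $i$, and one must also account for the $n$ spins that are created/destroyed at the seam. Summing these contributions should produce exactly $q^{-2nc+n(n+1)}$; the $-2nc$ is the linear-in-$c$ piece coming from shifting $n$ spins, and the $n(n+1)$ is the ``triangular number'' piece $2(1+2+\cdots+n)$ coming from the positions $1,2,\dots,n$ of the newly inserted spins relative to the origin. **Then** I would compute the change in the Hamiltonian: $H_J(\sigma)=\sum_i J(i)\1_{\{\sigma_i\neq\sigma_{i+1}\}}$, and under a shift by $n$ of the right block the set of disagreement bonds is translated, so a disagreement that contributed $J(i)$ now contributes $J(i+n)$ (or $J(i-n)$, depending on orientation), with no disagreements created or destroyed by the shift itself since the block is rigid and the seam is chosen to land between agreeing spins. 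Hence $H_J(\sigma) - H_J(T_n\sigma) = \sum_i (J(i+n)-J(i))\1_{\{\sigma_i\neq\sigma_{i+1}\}} = H_J^{(n)}(\sigma)$, which (after checking the direction of the shift matches) is exactly the claimed correction. Putting the two factors together and summing over $\sigma\in\mathcal{B}_n$, using the bijection to re-index the sum as a sum over $\tau=T_n\sigma\in\mathcal{B}_0$, gives
\[
\mu_J^c(\{N=n\}) = \frac{1}{Z_{\beta,q,c}^J}\sum_{\sigma\in\mathcal{B}_n} e^{-\beta H_J(\sigma)}q^{f_c(\sigma)} = q^{-2nc+n(n+1)} \mu_J^c\!\left(\1_{\{N=0\}} e^{-\beta H_J^{(n)}}\right),
\]
as desired.

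**The main obstacle** I anticipate is bookkeeping the seam correctly: one has to make sure the shift is defined so that the $n$ inserted/deleted spins sit in positions $\{1,\dots,n\}$ (not, say, $\{0,\dots,n-1\}$ or somewhere else), since this is what fixes the exponent as $n(n+1)$ rather than $n(n-1)$ or $n^2$, and one has to make sure no ``accidental'' disagreement bond is created at the seam (which is why one inserts $+1$'s adjacent to the right block of $+1$'s, keeping that region all-agreeing). A secondary subtlety is that $H_J(\sigma)$, $H_J(T_n\sigma)$, and $H_J^{(n)}(\sigma)$ are each finite sums on $\mathcal{B}$ (only finitely many disagreement bonds), so all the manipulations are term-by-term legitimate with no convergence issue — but this should be remarked on. I would also double-check the sign conventions so that for $n<0$ one shifts in the opposite direction and the formula $q^{-2nc+n(n+1)}$ still comes out correctly (it does, since $n(n+1)$ is invariant under the relevant reflection symmetry of the construction, reflecting the left/right symmetry of $f_c$).
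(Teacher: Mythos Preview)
Your general strategy---construct a bijection $\mathcal{B}_n\to\mathcal{B}_0$ and track how $H_J$ and $f_c$ transform---is exactly what the paper does. However, the specific bijection you describe does not work, and the paper uses a different, much simpler one.

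You propose to ``shift the right-hand block'' and equivalently ``insert $n$ extra $+1$'s just after the origin'' or ``delete $n$ $-1$'s''. None of these is a bijection $\mathcal{B}_n\to\mathcal{B}_0$. Inserting $n$ $+1$'s at positions $1,\dots,n$ (pushing the right half out by $n$) does not change $N$ at all, since both $N_+$ and $N_-$ are preserved. Inserting them instead at positions $-n+1,\dots,0$ does send $\mathcal{B}_n$ into $\mathcal{B}_0$, but is not surjective: the image consists only of those $\sigma'\in\mathcal{B}_0$ with $\sigma'_i=+1$ for $-n< i\le 0$. And ``deleting $n$ $-1$'s'' is not well-defined without specifying which ones, after which it loses the information needed to invert. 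The seam issues you anticipate are a symptom of this: there is no natural place to cut and splice.

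The paper instead uses the \emph{global} left shift $\tau:\mathcal{B}\to\mathcal{B}$, $(\tau\sigma)_i=\sigma_{i+1}$, applied to the entire configuration. One checks in one line that $N(\tau\sigma)=N(\sigma)-1$, so $\tau^n:\mathcal{B}_n\to\mathcal{B}_0$ is a genuine bijection for every $n\in\Z$. There is no seam, so no disagreement bonds are created or destroyed; every bond is simply relabelled, giving $H_J(\tau^{-n}\sigma')=H_J(\sigma')+H_J^{(n)}(\sigma')$ for $\sigma'\in\mathcal{B}_0$ immediately. For $f_c$ one finds $f_c(\tau\sigma)=f_c(\sigma)+2c-2N(\sigma)$, and iterating yields $f_c(\tau^n\sigma)=f_c(\sigma)+2nc-2nN(\sigma)+n(n-1)$; applying this with the inverse shift on $\mathcal{B}_0$ produces the factor $q^{-2nc+n(n+1)}$.

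One further correction: your claim that the $n<0$ case follows by reflection symmetry because ``$n(n+1)$ is invariant'' is false---$n(n+1)$ is not invariant under $n\mapsto -n$. With the global shift this becomes a non-issue, since the single iterated formula above holds for all $n\in\Z$ and no separate treatment of signs is needed.
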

\begin{proof}
Let $\tau:\mathcal{B}\to\mathcal{B}$ be the left shift operator, i.e. $(\tau\sigma)_i=\sigma_{i+1}$. We consider how this shift affects the values of $H_{J}$, $f_c$ and $N$. We start with $H_{J}$,
\begin{align*}
   H_{J}(\tau\sigma)&=\sum\limits_{i\in\mathbb Z} J(i)\1 _{\{(\tau\sigma)_i\neq (\tau\sigma)_{i+1}\}}=\sum\limits_{i\in\mathbb Z} J(i+1)\1 _{\{\sigma_{i+1}\neq \sigma_{i+2}\}}+\sum\limits_{i\in\mathbb Z} \big(J(i) - J(i+1)\big)\1 _{\{\sigma_{i+1}\neq \sigma_{i+2}\}}
   \\
   &=H_{J}(\sigma)+\sum\limits_{i\in\mathbb Z} \big(J(i) - J(i+1)\big)\1 _{\{\sigma_{i+1}\neq \sigma_{i+2}\}}.
\end{align*}
We can iterate this identity $n$ times which gives 
\begin{align*}
H_{J}(\tau^n\sigma) =& H_{J}(\sigma) + \sum\limits_{i\in\mathbb Z} \big(J(i) - J(i+n)\big)\1 _{\{\sigma_{i+n}\neq \sigma_{i+n+1}\}}
\\
=&H_{J}(\sigma) + \sum\limits_{i\in\mathbb Z} \big(J(i-n) - J(i)\big)\1 _{\{\sigma_{i}\neq \sigma_{i+1}\}}
\\
=:& H_{J}(\sigma) + H^{(-n)}_{J}(\sigma).
\end{align*}
Similarly we can define the right shift operator by $\tau^{-1}$ and find that
$$
H_{J}(\tau^{-n}\sigma)= H_{J}(\sigma) + H_{J}^{(n)}(\sigma).
$$

\noindent Now for $f_c$:
\begin{align*}   
   f_c(\tau\sigma)&=2\sum\limits_{i=1}^\infty(i-c)\1_{\{(\tau\sigma)_i=-1\}}-2\sum\limits_{i=-\infty}^0(i-c)\1_{\{(\tau\sigma)_i=1\}}\\
   &=2\sum\limits_{i=1}^\infty(i-c)\1_{\{\sigma_{i+1}=-1\}}-2\sum\limits_{i=-\infty}^0(i-c)\1_{\{\sigma_{i+1}=1\}}\\
   &=2\sum\limits_{i=1}^\infty(i-1-c)\1_{\{\sigma_i=-1\}}+2c\1_{\{\sigma_1=-1\}}-2\sum\limits_{i=-\infty}^0(i-1-c)\1_{\{\sigma_i=1\}}+2c\1_{\{\sigma_1=1\}}\\
   &=2c + 2\sum\limits_{i=1}^\infty(i-c)\1_{\{\sigma_i=-1\}}-2\sum\limits_{i=-\infty}^0(i-c)\1_{\{\sigma_i=1\}}-2\sum\limits_{i=1}^\infty \1_{\{\sigma_i=-1\}}+2\sum\limits_{i=-\infty}^0\1_{\{\sigma_i=1\}}\\
   &=2c-2N(\sigma)+f_c(\sigma),
\end{align*}
and for $N$:
\begin{align*}
   N(\tau\sigma)&=\sum\limits_{i=1}^\infty \1_{\{(\tau\sigma)_i=-1\}}-\sum\limits_{i=-\infty}^0\1_{\{(\tau\sigma)_i=1\}}\\
   &=\sum\limits_{i=1}^\infty \1_{\{\sigma_{i+1}=-1\}}-\sum\limits_{i=-\infty}^0\1_{\{\sigma_{i+1}=1\}}\\
   &=\sum\limits_{i=1}^\infty \1_{\{\sigma_i=-1\}}-\1_{\{\sigma_1=-1\}}-\sum\limits_{i=-\infty}^0\1_{\{\sigma_i=1\}}-\1_{\{\sigma_1=1\}}\\
   &=N(\sigma)-1.
\end{align*}
If we iterate these relations we have
\begin{align*}
f_c(\tau^n\sigma) =& 2nc - 2nN(\sigma) +n(n-1) + f_c(\sigma),
\\
N(\tau^n\sigma) =& N(\sigma) - n.
\end{align*}
Using these identities we find that, for $n\in\Z$, 
$
    \mu_{J}^c(\{N=n\}) = q^{-2nc + n(n+1)}\mu_{J}^c\Big(\1_{\{N=0\}}e^{-\beta H^{(n)}_{J}}\Big).
$
\end{proof}
Using this lemma for our two cases of interest gives an expression for $\nu^n_{J,\beta,q}(\sigma)$ that we will work with.
\begin{enumerate}
\item $J(i)\equiv 1$.  
In this case $H^{(n)}(\sigma)= 0$ and thus we can cancel a factor of $Z_{\beta,q,c}\mu^c(\{N=0\})$ from numerator and denominator to give
\begin{equation}
\mu^c(\{N=n\})=\frac{q^{n(n+1)-2nc}}{\sum\limits_{m=-\infty}^\infty q^{m(m+1)-2mc}}.
\end{equation}
\par This then gives that,
\begin{equation}
    \nu^n_{\beta,q}(\sigma)=\frac{\sum\limits_{m\in\mathbb Z}q^{m(m+1)-2mc}\cdot e^{-\beta H(\sigma)}q^{f_c(\sigma)}}{q^{n(n+1)-2nc}Z_{\beta,q,c}}\1_{\{N(\sigma)=n\}}.
\end{equation}
It remains to find an expression for $Z_{\beta,q,c}$.

\item $J(i)=i$. 
Recalling $H(\sigma)= \sum_{i\in \Z} \1_{\{\sigma_i\neq \sigma_{i+1}\}}$, the hamiltonian for the case $J(i)\equiv 1$, our identity gives us
\begin{equation}
\mu^c_{J}(\{N=n\}) = q^{-2nc+n(n+1)} \mu^c_{J}\big(\1_{\{N=0\}}e^{-\beta n H}\big).
\end{equation}
This then gives that, 
\begin{equation}
    \nu^n_{J,\beta,q}(\sigma)=\frac{e^{-H_J(\sigma)}q^{f_c(\sigma)}}{q^{n(n+1)-2nc}\mu^c_J(\1_{\{N=0\}}e^{-\beta nH})Z^J_{\beta,q,c}}.
\end{equation}

\end{enumerate} 

\section{Transferring dynamics to a family nearest neighbour particle systems}~
\par We will now introduce a family of nearest neighbour interacting particle systems that are equivalent to the Ising process on $\mathbb{Z}$ under Kawasaki dynamics. We transfer the dynamics of the Ising chain on $\mathcal{B}_n$ to that of an interacting particle system on
\begin{equation}
\Omega\defeq\{\omega \in \mathbb{Z}_{\geq 0}^{\mathbb{Z}_{<0}} : \exists N > 0 \hspace{2mm}\textrm{ s.t } \omega_{-i}=0 \hspace{2mm} \forall i \geq N\},
\end{equation}
using a standing up map (as in the paper of Bal\'azs and Bowen \cite{blocking}). By doing this we can obtain an alternative characterisation of the stationary measure $\nu^n_{J,\beta,q}$ and equating the two expressions will lead us to probabilistic proofs of identities of combinatorial significance (Section \ref{sec: identities}).
    \begin{defn}\label{def: stand up}
    Given $\sigma\in\mathcal{B}_n$, let $S_r(\sigma)$ be the site of the $r^\text{th}$ positive spin, counted from the left. The \textbf{standing up map} $T^n:\mathcal{B}_n\rightarrow \Omega$ is defined by, $T^n(\sigma)=\omega$ where for $r\in\mathbb{Z}_{>0}$, $\omega_{-r}=S_{r+1}(\sigma)-S_r(\sigma)-1$. In other words, the number of particles at site $-r$ in $\omega$ is equal to the number of negative spins between the $r^\text{th}$ and $(r+1)^\text{th}$ positive spin in $\sigma$.
    \end{defn}
    \begin{figure}[H]
    \centering
    \begin{subfigure}[b]{0.4\textwidth}
    \centering
    \begin{tikzpicture}[scale=0.7]
    \draw[thick, <->] (-6,0.8)--(6,0.8);
\foreach \x in {-5,-4,-3,-2,-1,0,1,2,3,4,5}
    \draw[thick, -](\x cm, 0.9)--(\x cm, 0.7) node[anchor=north]{$\x$};
 
 \filldraw [black] (-5.5,1.3) circle (1pt);
\filldraw [black] (-5.7,1.3) circle (1pt);
\filldraw [black] (-5.9,1.3) circle (1pt);   
\node (a) at (-5,1) [label=\textbf{--}]{};
\node (a) at (-4,1) [label=\textbf{+}]{};
\node (a) at (-4.35,1.15) [label=\tiny{1}]{};
\node (a) at (-3,1) [label=\textbf{--}]{};
\node (a) at (-2,1) [label=\textbf{+}]{};
\node (a) at (-2.35,1.15) [label=\tiny{2}]{};
\node (a) at (-1,1) [label=\textbf{--}]{};
\node (a) at (0,1) [label=\textbf{--}]{};
\node (a) at (1,1) [label=\textbf{+}]{};
\node (a) at (0.65,1.15) [label=\tiny{3}]{};
\node (a) at (2,1) [label=\textbf{+}]{};
\node (a) at (1.65,1.15) [label=\tiny{4}]{};
\node (a) at (3,1) [label=\textbf{--}]{};
\node (a) at (4,1) [label=\textbf{+}]{};
\node (a) at (3.65,1.15) [label=\tiny{5}]{};
\node (a) at (5,1) [label=\textbf{+}]{};
\node (a) at (4.65,1.15) [label=\tiny{6}]{};
 \filldraw [black] (5.5,1.3) circle (1pt);
\filldraw [black] (5.7,1.3) circle (1pt);
\filldraw [black] (5.9,1.3) circle (1pt); 
    \end{tikzpicture}
    \caption{Our starting Ising configuration, $\sigma$.}
    \end{subfigure}
    \hfill
       \begin{subfigure}[b]{0.4\textwidth}
       \centering
    \begin{tikzpicture}[scale=0.7]
    \draw[thick, <-] (-7,-1)--(0,-1);
\foreach \x in {-6,-5,-4,-3,-2,-1}
    \draw[thick, -](\x cm, -1.1)--(\x cm, -0.9) node[anchor=north]{$\x$};
    
\filldraw [black] (-2,-0.5) circle (4pt);
\filldraw [black] (-2,0) circle (4pt);
\filldraw [black] (-1,-0.5) circle (4pt);
\filldraw [black] (-4,-0.5) circle (4pt);
 \filldraw [black] (-6.5,-0.7) circle (1pt);
\filldraw [black] (-6.7,-0.7) circle (1pt);
\filldraw [black] (-6.9,-0.7) circle (1pt); 
    \end{tikzpicture}
    \caption{The resulting stood up configuration, $\omega$.}
    \end{subfigure}
    \caption{An example of the bijection $T^{-1}$.}
    \label{fig: stand up asep holes}
\end{figure}
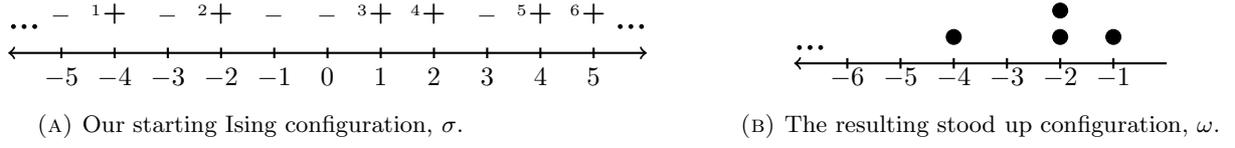

    \begin{lem}\label{lem: T^n bijection}
    For any $n\in\mathbb{N}$ the map $T^n:\mathcal{B}_n\rightarrow \Omega$ is a bijection.    
    \end{lem}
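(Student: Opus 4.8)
The plan is to show that the standing up map $T^n$ is a bijection by exhibiting an explicit inverse, essentially reversing the construction in Definition \ref{def: stand up}. Given $\omega \in \Omega$, I would define a spin configuration $\sigma$ as follows: start reading $\omega$ from site $-1$ leftwards (i.e. $-1,-2,-3,\dots$) and build $\sigma$ from left to right by placing, for each $r \geq 1$, a block consisting of $\omega_{-r}$ negative spins followed by a single positive spin; this determines $\sigma$ on the sites $\{S_1(\sigma), S_1(\sigma)+1, \dots\}$ once we pin down where the first positive spin $S_1(\sigma)$ sits. The location $S_1(\sigma)$ is forced by the requirement that $N(\sigma) = n$: the number of negative spins to the right of position $1$ minus the number of positive spins in $\{\dots,-1,0\}$ must equal $n$. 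Since $\omega$ has only finitely many particles, and since to the left of the first positive spin we must place all negative spins (and these continue infinitely far left, consistent with membership in $\mathcal{B}_n$), this pins down $S_1(\sigma)$ uniquely, and hence $\sigma$ uniquely.

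The key steps, in order, would be: (i) check that the configuration $\sigma$ built from $\omega$ as above actually lies in $\mathcal{B}$ — this is immediate since $\omega$ has finitely many particles, so $\sigma$ has finitely many positive spins among $\{\dots,-1,0\}$ (all but finitely many of the blocks of negative spins are empty, so the positive spins eventually occupy every site) and finitely many negative spins among $\{1,2,\dots\}$, giving finitely many neighbouring disagreements; (ii) verify that the offset can be chosen so that $N(\sigma) = n$, and that this choice is unique — here I would use that shifting $\sigma$ by one site changes $N$ by exactly $\pm 1$ (this is exactly the computation $N(\tau\sigma) = N(\sigma)-1$ from the proof of Lemma \ref{lem:shift}), so among all translates of the "shape" determined by $\omega$ there is exactly one with $N = n$; (iii) check that $T^n$ applied to this $\sigma$ returns $\omega$, which is immediate from the construction since $\omega_{-r}$ was defined to be the gap $S_{r+1}(\sigma) - S_r(\sigma) - 1$; and (iv) check that starting from an arbitrary $\sigma \in \mathcal{B}_n$, forming $\omega = T^n(\sigma)$, and then reconstructing gives back $\sigma$ — this follows because $\sigma \in \mathcal{B}_n$ is completely determined by the positions of its positive spins, these positions are determined by the consecutive gaps together with $S_1(\sigma)$, the gaps are exactly the coordinates of $\omega$, and $S_1(\sigma)$ is recovered from the constraint $N(\sigma) = n$.

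I do not expect any single step to be a serious obstacle; the content is really just bookkeeping. The one point requiring a little care is the well-definedness and uniqueness of the offset in step (ii): one must argue both that there exists a translate with $N = n$ (not just that $N$ changes by $\pm 1$ under a unit shift, but that $N$ ranges over all of $\mathbb{Z}$ as the translate varies, which follows since $N(\tau^k \sigma) = N(\sigma) - k$) and that two configurations in $\mathcal{B}_n$ with the same gap sequence must coincide. For the latter, if $\sigma, \sigma' \in \mathcal{B}_n$ have the same image under $T^n$, then $\sigma' = \tau^k \sigma$ for some $k \in \mathbb{Z}$, but then $n = N(\sigma') = N(\sigma) - k = n - k$, forcing $k = 0$. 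This is the crux, and it is short. Everything else is a direct unwinding of Definition \ref{def: stand up}, so I would present the proof as: construct the inverse map explicitly, then verify the two composition identities, citing the shift computation from Lemma \ref{lem:shift} for the uniqueness of the offset.
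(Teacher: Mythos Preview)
Your proposal is correct and takes essentially the same approach as the paper: both construct an explicit inverse by rebuilding $\sigma$ from the gap sequence $\omega$ and pinning down the position of the first positive spin. The only difference is cosmetic: the paper writes down the explicit formula $S_1^{(n)}(\omega) = n+1-\sum_{i=1}^\infty \omega_{-i}$ directly and then checks $T^n(\sigma)=\omega$, whereas you argue for existence and uniqueness of the correct translate via the shift identity $N(\tau\sigma)=N(\sigma)-1$; your route is slightly more conceptual but amounts to the same computation, and the paper's explicit formula is exactly what your argument would produce once unwound.
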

    \begin{proof}
       It is clear that the standing up map $T^n$ is an injection into $\mathbb{Z}_{\geq 0}^{\mathbb{Z}_{<0}}$. Since $\sigma_i=1$ for $i$ large enough, $T^n(\sigma)$ must coincide far to the left with $\omega^0$ which is such that $\omega_{-i}^0=0$ for all $i>0$. So the image of $T^n$ lies in, $$\Omega\defeq\{\omega \in \mathbb{Z}_{\geq 0}^{\mathbb{Z}_{<0}} : \exists N > 0 \hspace{2mm}\textrm{ s.t } \omega_{-i}=0 \hspace{2mm} \forall
    i \geq N\}.$$
    \par Now take some $\omega\in\Omega$ and some $n \in \mathbb{Z}$. We then construct the state $\sigma \in \mathcal{B}_n$ with leftmost positive spin at site $S^{(n)}_1(\omega)=n+1-\sum\limits_{i=1}^{\infty}\omega_{-i}$ and $r^\textrm{th}$ positive spin at site $S^{(n)}_r(\omega)=S_{r-1}(\omega)+\omega_{-(r-1)}+1$. In other words the $r^\text{th}$ positive spin is at site $S^{(n)}_r(\omega)=n+r-\sum\limits_{i=r}^\infty\omega_{-i}$ for any $r>0$. It is clear by construction that $T^n(\sigma)=\omega$. And so $T^n: \mathcal{B}_n \rightarrow \Omega$ is a bijection for any $n\in\mathbb{Z}$.
    \end{proof}
The dynamics for this corresponding particle system are inherited from the Kawasaki dynamics on $\mathcal{B}_n$. Note that in general the dynamics of the process depend on $n$. This means that for a given interaction function $J(i)$ in the Ising interaction we have a family of stood up processes, one for each $n\in\mathbb{Z}$. The jump rates for the stood up process, with a fixed value of $n$, are given in Tables \ref{table: bulk rates right}, \ref{table: bulk rates left} and \ref{table: boundary rates} below.  Where $S^{(n)}_r(\omega)$ is the function defined in the proof above. 
\begin{table}[H]
\centering
    \begin{tabular}{|c||c|c|}
         \hline
         & $\omega_{-r+1}=0$ & $\omega_{-r+1}\geq1$ \\
         \hline \hline
         $\omega_{-r}=0$ &0&0 \\
         \hline
        $\omega_{-r}=1$ &$\frac{1}{2}\left(1+\tanh(\beta\frac{J(S^{(n)}_{r}(\omega)+1)-J(S^{(n)}_r(\omega)-1)}{2})\right)q^{-1}$ &$\frac{1}{2}\left(1+\tanh(\beta\frac{J(S^{(n)}_r(\omega)-1)+J(S^{(n)}_r(\omega)+1)}{2})\right)q^{-1}$ \\
        \hline
        $\omega_{-r}\geq2$ &$\frac{1}{2}\left(1-\tanh(\beta\frac{J(S^{(n)}_r(\omega)-1)+J(S^{(n)}_r(\omega)+1)}{2})\right)q^{-1}$ &$\frac{1}{2}\left(1-\tanh(\beta\frac{J(S^{(n)}_r(\omega)+1)-J(S^{(n)}_r(\omega)-1)}{2})\right)q^{-1}$ \\
        \hline
    \end{tabular}
\caption{The right jump rates over $(-r,-r+1)$ for $r\geq2$ of the stood up process.}
\label{table: bulk rates right}
\end{table}
\begin{table}[H]
    \centering
     \begin{tabular}{|c||c|c|c|}
         \hline
        &$\omega_{-r+1}=0$ & $\omega_{-r+1}=1$ & $\omega_{-r+1}\geq2$\\
         \hline \hline
        $\omega_{-r}=0$ &0 &$\frac{1}{2}\left(1-\tanh(\beta\frac{J(S^{(n)}_r(\omega))-J(S^{(n)}_{r-1}(\omega))}{2})\right)q$ & $\frac{1}{2}\left(1-\tanh(\beta\frac{J(S^{(n)}_r(\omega)-2)+J(S^{(n)}_r(\omega))}{2})\right)q$\\
        \hline
        $\omega_{-r}\geq1$ &0 &$\frac{1}{2}\left(1+\tanh(\beta\frac{J(^{(n)}S_r(\omega)-2)+J(S^{(n)}_r(\omega))}{2})\right)q$  &  $\frac{1}{2}\left(1+\tanh(\beta\frac{J(S^{(n)}_r(\omega))-J(S^{(n)}_r(\omega)-2)}{2})\right)q$\\
        \hline
    \end{tabular}
    \caption{The left jump rates over $(-r,-r+1)$ for $r\geq2$ of the stood up process.}
    \label{table: bulk rates left}
\end{table}
\begin{table}[H]
    \centering
    \begin{tabular}{|c||c|c|}
         \hline
         &  Rate into the boundary &  Rate out of the boundary \\
         \hline \hline
          $\omega_{-1}=0$&0 & $\frac{1}{2}\left(1-\tanh(\beta\frac{J(S^{(n)}_1(\omega)-2)+J(S^{(n)}_1(\omega))}{2})\right)q$  \\
         \hline
       \multicolumn{1}{|c||}{ $\omega_{-1}=1$} & \multicolumn{1}{|c|}{$\frac{1}{2}\left(1+\tanh(\beta\frac{J(S^{(n)}_1(\omega)-1)+J(S^{(n)}_1(\omega)+1)}{2})\right)q^{-1}$} & \multirow{2}{*}{$\frac{1}{2}\left(1+\tanh(\beta\frac{J(S^{(n)}_1(\omega))-J(S^{(n)}_1(\omega)-2)}{2})\right)q$} \\ \cline{1-2}
        \multicolumn{1}{|c||}{$\omega_{-1}\geq2$} & \multicolumn{1}{|c|}{$\frac{1}{2}\left(1-\tanh(\beta\frac{J(S^{(n)}_1(\omega)+1)-J(S^{(n)}_1(\omega)-1)}{2})\right)q^{-1}$} &\\
        \hline
    \end{tabular}
    \caption{Boundary jump rates for the stood up process.}
    \label{table: boundary rates}
\end{table}
\begin{rem}
    When we consider the Ising chain with homogeneous interaction, $J(i)\equiv 1$, the dynamics for the stood up process are independent of $n$ and so we have a single particle system rather than a family of systems.
\end{rem}

\begin{prop}\label{prop: stationary dist stood up}
 Given $n\in\mathbb{Z}$, the unique stationary measure for the process on $\Omega$ with the dynamics given in Tables \ref{table: bulk rates right}, \ref{table: bulk rates left} and \ref{table: boundary rates} is given by, 
$$\pi_J^{(n)}(\omega)=\frac{e^{-\beta(\sum\limits_{j=1}^\infty (J(S^{(n)}_j(\omega))+J(S^{(n)}_{j+1}(\omega)-1))\1\{\omega_{-j}>0\}+J(S^{(n)}_1(\omega)-1))}q^{2\sum\limits_{i=1}^\infty i\omega_{-i}}}{\sum\limits_{z\in\Omega}e^{-\beta(\sum\limits_{j=1}^\infty (J(S^{(n)}_j(z))+J(S^{(n)}_{j+1}(z)-1))\1\{z_{-j}>0\}+J(S^{(n)}_1(z)-1))}q^{2\sum\limits_{i=1}^\infty iz_{-i}}}$$
\end{prop}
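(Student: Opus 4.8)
The plan is to identify $\pi_J^{(n)}$ as the image of $\nu^n_{J,\beta,q}$ under the standing up bijection $T^n$, and then to make that image explicit. Since the stood up dynamics on $\Omega$ are, by construction, obtained from the Kawasaki dynamics on $\mathcal{B}_n$ by transport through the bijection $T^n$ of Lemma \ref{lem: T^n bijection} (this is exactly how the rates in Tables \ref{table: bulk rates right}--\ref{table: boundary rates} were derived), the generator of the $\Omega$-process is conjugate to the Kawasaki generator by $T^n$. Hence a probability measure $\pi$ on $\Omega$ is stationary (and reversible) for the stood up process if and only if $\pi\circ T^n$ is stationary (and reversible) for the Kawasaki dynamics on $\mathcal{B}_n$. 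By Proposition \ref{stationary dist on B^n} the unique such measure is $\nu^n_{J,\beta,q}$, and since the Kawasaki dynamics are irreducible on $\mathcal{B}_n$ (any blocking configuration is reached from any other by finitely many swaps of neighbouring disagreeing spins) the stood up process is irreducible on $\Omega$; thus its unique stationary measure is $\pi_J^{(n)}(\omega)=\nu^n_{J,\beta,q}\big((T^n)^{-1}\omega\big)$. The remaining work is to compute this pushforward, using that $\nu^n_{J,\beta,q}(\sigma)\propto e^{-\beta H_J(\sigma)}q^{f_c(\sigma)}$ on $\mathcal{B}_n$.

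Fix $\omega\in\Omega$ and write $\sigma=(T^n)^{-1}\omega$, whose $j$-th positive spin sits at $S^{(n)}_j(\omega)$ with $S^{(n)}_{j+1}(\omega)-S^{(n)}_j(\omega)=\omega_{-j}+1$. Every site strictly to the left of $S^{(n)}_1(\omega)$ carries $-1$, and between the $j$-th and $(j+1)$-st positive spins sit exactly $\omega_{-j}$ consecutive $-1$ spins. Hence the neighbouring-spin disagreements of $\sigma$ are precisely the bond $(S^{(n)}_1(\omega)-1,S^{(n)}_1(\omega))$ together with, for each $j$ with $\omega_{-j}>0$, the two bonds $(S^{(n)}_j(\omega),S^{(n)}_j(\omega)+1)$ and $(S^{(n)}_{j+1}(\omega)-1,S^{(n)}_{j+1}(\omega))$. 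Substituting into \eqref{eq:ham},
\[
H_J(\sigma)=J\big(S^{(n)}_1(\omega)-1\big)+\sum_{j\geq1}\Big(J\big(S^{(n)}_j(\omega)\big)+J\big(S^{(n)}_{j+1}(\omega)-1\big)\Big)\1_{\{\omega_{-j}>0\}},
\]
which is exactly the exponent appearing in $\pi_J^{(n)}$.

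For the external-field term I claim $f_c(\sigma)=2\sum_{i\geq1}i\,\omega_{-i}+\big(n(n+1)-2nc\big)$. First reduce to $n=0$: since a left shift preserves the gaps between positive spins, $(T^n)^{-1}\omega=\tau^{-n}\big((T^0)^{-1}\omega\big)$, so the shift identity $f_c(\tau^{-n}\sigma')=f_c(\sigma')-2nc+n(n+1)$ for $\sigma'\in\mathcal{B}_0$ (a special case of the identities in the proof of Lemma \ref{lem:shift}, using $N(\sigma')=0$) reduces the claim to showing $f_c\big((T^0)^{-1}\omega\big)=2\sum_{i\geq1}i\,\omega_{-i}$. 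This follows by induction on $\sum_{i\geq1}i\,\omega_{-i}$: it reads $0=0$ when $\omega\equiv0$ (the configuration with $+1$ on $\{1,2,\dots\}$), and an allowed single exchange that moves one particle between $-i$ and $-(i\pm1)$ (or in or out of the boundary) slides one positive spin of $\sigma$ by one site, which by the explicit formula \eqref{eq:fc} changes $f_c(\sigma)$ by $\pm2$ — the same sign and magnitude as the change of $2\sum_{i\geq1}i\,\omega_{-i}$, in each of the position cases for the affected bond (both endpoints $\geq 1$, both $\leq 0$, or straddling the origin). Consequently $q^{f_c(\sigma)}=q^{\,n(n+1)-2nc}\,q^{\,2\sum_{i\geq1}i\,\omega_{-i}}$.

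Putting the pieces together, $\nu^n_{J,\beta,q}(\sigma)=e^{-\beta H_J(\sigma)}q^{f_c(\sigma)}\big/\sum_{\sigma'\in\mathcal{B}_n}e^{-\beta H_J(\sigma')}q^{f_c(\sigma')}$; passing to $\omega$-coordinates the constant factor $q^{\,n(n+1)-2nc}$ cancels between numerator and denominator, leaving exactly the asserted formula for $\pi_J^{(n)}(\omega)$. I expect the main obstacle to be the $f_c$ identity, i.e.\ the accounting of how the inhomogeneous field interacts with the block structure of $\sigma$ across the origin; the disagreement count for $H_J$ and the abstract pushforward/uniqueness argument are routine. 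As a cross-check one could instead verify detailed balance for $\pi_J^{(n)}$ directly against Tables \ref{table: bulk rates right}--\ref{table: boundary rates}, but that is longer and carries the same content.
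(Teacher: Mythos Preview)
Your proof is correct and takes a genuinely different route from the paper. The paper simply asserts that one can check detailed balance for $\pi_J^{(n)}$ directly against the rates in the tables, together with concentration on $\Omega$ via Lemma~\ref{lem:nearestneighbourconcentration}. You instead argue conceptually: the stood up dynamics are the transport of the Kawasaki dynamics through the bijection $T^n$, so the unique stationary law must be the pushforward $\nu^n_{J,\beta,q}\circ(T^n)^{-1}$, and you then compute that pushforward explicitly by reading off $H_J$ from the disagreement structure of $(T^n)^{-1}\omega$ and establishing $f_c\big((T^n)^{-1}\omega\big)=2\sum_{i\ge1}i\,\omega_{-i}+n(n+1)-2nc$ via the shift identity and an induction along single spin exchanges. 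Your approach explains \emph{why} the formula takes this shape (and in particular why the constant $q^{n(n+1)-2nc}$ cancels), whereas the paper's direct detailed-balance check is shorter but opaque; on the other hand, the paper's route avoids the bookkeeping of the $f_c$ computation across the origin, which is the only place your argument requires real care. You in fact note the paper's route yourself as a cross-check at the end.
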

\begin{proof}
  It is easy to check that $\pi_J^{(n)}$ satisfies detailed balance both in the bulk and over the boundary edge for the process with the jump rates given in Tables \ref{table: bulk rates right},\ref{table: bulk rates left} and \ref{table: boundary rates}. The conditions given in Lemma \ref{lem:nearestneighbourconcentration} give that $\pi_J^{(n)}$ concentrates on $\Omega$.
 \end{proof}    
\begin{rem}\label{rem: pi} In terms of our two $J(i)$'s of interest we have:
 \begin{itemize}
     \item If $J(i)\equiv 1$ then the dynamics are independent of $n$ and for any $n\in\mathbb{Z}$, $\pi^{(n)}\equiv \pi$ where,
 $$\pi(\omega)=\prod\limits_{i=1}^\infty \frac{e^{-2\beta\1\{\omega_{-i}>0\}}q^{2i\omega_{-i}}}{\sum\limits_{z=0}^\infty e^{-2\beta\1\{z>0\}}q^{2iz}}=\prod\limits_{i=1}^\infty \frac{e^{-2\beta\1\{\omega_{-i}>0\}}q^{2i\omega_{-i}}}{\frac{e^{-2\beta}}{1-q^{2i}}-e^{-2\beta}+1}=\prod\limits_{i=1}^\infty \frac{e^{-2\beta\1\{\omega_{-i}>0\}}q^{2i\omega_{-i}}(1-q^{2i})}{1+(e^{-2\beta}-1)q^{2i}}.$$ 
 \item If $J(i)=i$ then we can write, 
 \begin{align*}
 \pi_J^{(n)}(\omega)&=\frac{e^{-\beta(\sum\limits_{j=1}^\infty ((2(n+j-\sum\limits_{i=j+1}^\infty\omega_{-i})-\omega_{-j})\1\{\omega_{-j}>0\})+n-\sum\limits_{i=1}^\infty \omega_{-i})}q^{2\sum\limits_{i=1}^\infty i\omega_{-i}}}{\sum\limits_{z\in\Omega}e^{-\beta(\sum\limits_{j=1}^\infty ((2(n+j-\sum\limits_{i=j+1}^\infty z_{-i})-z_{-j})\1\{z_{-j}>0\})+n-\sum\limits_{i=1}^\infty z_{-i})}q^{2\sum\limits_{i=1}^\infty i z_{-i}}}\\
 \\
&=\frac{\prod\limits_{i=1}^\infty e^{-2\beta\1_{\{\omega_{-i}>0\}}(n+i-\sum\limits_{j=i}^\infty \omega_{-j})}q^{2i\omega_{-i}}}{\sum\limits_{z\in\Omega}\prod\limits_{i=1}^\infty e^{-2\beta\1_{\{z_{-i}>0\}}(n+i-\sum\limits_{j=i}^\infty z_{-j})}q^{2iz_{-i}}}.
 \end{align*}
 \end{itemize}    
 \end{rem}

\section{Identities found by equating measures for the corresponding families}\label{sec: identities}
\par For any $n\in\mathbb{Z}$, the map $T^n$ (Definition \ref{def: stand up}) is a bijection between states in $\mathcal{B}_n$ and states in $\Omega$. This gives an equivalence of the two processes and thus an equivalence of stationary measures. That is, for any $\sigma \in \mathcal{B}_n$ such that $T^n(\sigma)=\omega\in\Omega$ we have $\nu_{J,\beta,q}^n(\sigma)=\pi_J^{(n)}(\omega)$. 
\par We can consider a convenient state for both the Ising process and the equivalent particle system. First let us fix an $n\in\mathbb{Z}$. For the Ising process on $\mathcal{B}$, convenient states will be those states where all the positive spins are as far to the right as possible. On $\mathcal{B}_n$ we take, $\sigma^n_i=\begin{cases}
    -1 &\text{ if } i\leq n\\
    1 &\text{ if } i>n
\end{cases}$ . 
We have that $f_c(\sigma^n)=n(n+1)-2nc$ and $H_J(\sigma^n)=J(n)$. This means that
\begin{equation}
\mu^c_J(\sigma^n) = \frac{1}{Z^{J}_{\beta,q,c}} e^{-\beta J(n)}q^{n(n+1)-2nc},
\end{equation}
and so, 
\begin{equation}
\nu^n_{J,\beta,q}(\sigma^n)=\frac{e^{-\beta J(n)}q^{n(n+1)-2nc}}{Z^J_{\beta,q,c}\mu^c_J(\{N=n\})}.
\end{equation}

\par \noindent Under the bijection $T^n(\sigma^n)=\omega^0$, where $\omega^0_{-i}=0$ for all $i>0$ is the empty state. For this state, we have that $S_1^{(n)}(\omega^0)=n+1$ and so, 
$$\pi_J^{(n)}(\omega^0)=\frac{e^{-\beta J(n)}}{\sum\limits_{z\in\Omega}e^{-\beta(\sum\limits_{j=1}^\infty (J(S^{(n)}_j(z))+J(S^{(n)}_{j+1}(z)-1))\mathbb{I}\{z_{-j}>0\}+J(S^{(n)}_1(z)-1))}q^{2\sum\limits_{i=1}^\infty iz_{-i}}}.$$ 
This equivalence of measures evaluated at the states $\sigma^n$ and $\omega^0$, gives us the following identity, 
\begin{equation}\label{eq: general identity}
   \frac{e^{-\beta J(n)}q^{n(n+1)-2nc}}{Z^J_{\beta,q,c}\mu^c_J(\{N=n\})} =\frac{e^{-\beta J(n)}}{\sum\limits_{z\in\Omega}e^{-\beta(\sum\limits_{j=1}^\infty (J(S^{(n)}_j(z))+J(S^{(n)}_{j+1}(z)-1))\mathbb{I}\{z_{-j}>0\}+J(S^{(n)}_1(z)-1))}q^{2\sum\limits_{i=1}^\infty iz_{-i}}}.
\end{equation}
\par By considering $J(i)$ functions of interest we obtain two identities for further study.
\begin{itemize}
    \item For $J(i)\equiv 1$,
    \begin{equation}\label{eq: J=1 iden}
        \frac{e^{-\beta}\sum\limits_{m\in\mathbb{Z}}q^{m(m+1)-2mc}}{Z_{\beta,q,c}}=\prod\limits_{i=1}^\infty\frac{1-q^{2i}}{1+(e^{-2\beta}-1)q^{2i}}.
    \end{equation}
    \item For $J(i)=i$,
    \begin{equation}\label{eq: J=i iden}
        \frac{e^{-\beta n}}{Z^J_{\beta,q,c}\mu^c_J(\1_{\{N=0\}}e^{-n\beta H})}=\frac{1}{\sum\limits_{z\in\Omega}\prod\limits_{i=1}^\infty e^{-2\beta\1_{\{z_{-i}>0\}}(n+i-\sum\limits_{j=i}^\infty z_{-j})}q^{2iz_{-i}}}.
    \end{equation}
\end{itemize}

\noindent We need to better understand $Z_{\beta,q,c}$ and $Z^J_{\beta,q,c}\mu^c_J(\1_{\{N=0\}}e^{-n\beta H})$ for $J(i)=i$, we will study these in the next section.
\begin{rem}
    We see that for any $n\in\mathbb{Z}$, when we take $J(i)\equiv 1$ we get the same identity. This is not surprising since the dynamics of the equivalent particle system in this case are independent of $n$.
\end{rem}

\subsection{An expression for the partition functions in terms of runs of consecutive spins}~ \label{sec: ising partitions function}

\par In  this section we will find two expressions for $\mu^c_{J}(\{N=n\})$ in the case $J(i)=i$ (see Corollaries \ref{cor:Jiexpression1} and \ref{cor:Jiexpression2}), and an expression $Z^{J}_{\beta,q,c}$ in the case $J(i)=1$ (see Corollory \ref{cor:J1expression1}), in terms of the runs of consecutive negative spins to the right of 1 and the runs of consecutive positive spins to the left of 0. 
The first expression for $\mu^c_{J}(\{N=n\})$ will be derived from the expression in Lemma \ref{lem:shift} that made use of the shift operator and be used to derive the expression for $Z_{\beta,q,c}$ in Corollary \ref{cor:partn}. The second expression will be derived directly from $\mu^c_{J}(\{N=n\})$ using the same method and be more convenient for our combinatorial identities.

We first define quantities that appear in these expressions that are dependent on $J$. Define $Q=q^2$ and take $\ell_1,\dots,\ell_L,m_1,\dots,m_R\geq 1$ and $n\in\N_0$ fixed. Let us write $A^{(n)}_0=B^{(n)}_0=a^{(n)}_0=b^{(n)}_0=1$ and for $R,L>0$
\begin{align}
    A^{(n)}_R &:= \sum_{\substack{s_1,\dots,s_{R}\geq 1 \\ s_{i+1}-s_i \geq m_i+1}}\prod_{j=1}^RQ^{s_jm_j}(e^{-\beta})^{J(s_j+n-1)+J(s_j+m_j+n-1)}, \label{eq:AR}
    \\
    B^{(n)}_L &:= \sum_{\substack{r_1,\dots,r_L\leq 0 \\ r_{i+1}-r_i \leq -\ell_i -1}}\prod_{j=1}^{L}Q^{-r_j \ell_j}(e^{-\beta})^{J(r_j+n)+J(r_j-\ell_j+n)}, \label{eq:BL}
    \\
    a^{(n)}_R &:= \sum_{\substack{s_1,\dots,s_{R}\geq 1 \\ s_{i+1}-s_i \geq m_i+1}}\1_{\{s_1>1\}}\prod_{j=1}^RQ^{s_jm_j}(e^{-\beta})^{J(s_j+n-1)+J(s_j+m_j+n-1)}, \label{eq:aR}
    \\
    b^{(n)}_L &:= \sum_{\substack{r_1,\dots,r_L\leq 0 \\ r_{i+1}-r_i \leq -\ell_i -1}}\1_{\{r_1<0\}}\prod_{j=1}^{L}Q^{-r_j \ell_j}(e^{-\beta})^{J(r_j+n)+J(r_j-\ell_j+n)}.    \label{eq:bl}
\end{align}
For some choices of the function $J:\Z\to\R$ these sums are explicitly computable in terms of elementary functions, for example the case $J(i)=1$ and $J(i)=i$ considered below. Considering different choices of $J$ will lead to further combinatorial identities than the ones considered here, however in many cases the sums are either not explicitly computable or extremely cumbersome.

\begin{lem}\label{lem:Zexpression}
Take $c\in \R$, $\beta\geq0$, $q\in(0,1)$, $Q=q^2$, and $J:\Z\to\R$ satisfying the summability condition in Lemma \ref{lem:nearestneighbourconcentration}. Let $A^{(n)}_R,B^{(n)}_L,a^{(n)}_R,b^{(n)}_L$ be defined as in equations \eqref{eq:AR}-\eqref{eq:bl}. For any $n\in \Z$ we have
\begin{equation*}
    \begin{aligned}
  Z^{J}_{\beta,q,c}\mu_{J}^c\hspace{-1pt}\Big(\1_{\{N=0\}}e^{-\beta H^{(n)}_{J}}\hspace{-2pt}\Big)      
  &= e^{-\beta J(n)} + \sum_{L,R>0}\sum_{\substack{\ell_1,\dots,\ell_L\geq 1 \\ m_1,\dots,m_{R}\geq 1}}\1_{\big\{\sum_{j=1}^R m_j= \sum_{j=1}^L\ell_j\big\}}
  \\
  &\prod_{j=1}^L Q^{\tfrac12 \ell_j(\ell_j-1)}\prod_{j=1}^{R}Q^{\tfrac12 m_j(m_j-1)}\Big(e^{\beta J(n)}A^{(n)}_RB^{(n)}_L + \big(e^{-\beta J(n)}-e^{\beta J(n)}\big)a^{(n)}_Rb^{(n)}_L\Big).
    \end{aligned}
\end{equation*}
\end{lem}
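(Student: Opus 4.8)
The goal is to expand $Z^{J}_{\beta,q,c}\mu_{J}^c\big(\1_{\{N=0\}}e^{-\beta H^{(n)}_{J}}\big)$ as a sum over configurations $\sigma\in\mathcal B_0$, and organise that sum according to the \emph{runs} of consecutive negative spins to the right of site $1$ and of consecutive positive spins to the left of site $0$. Recall that $Z^{J}_{\beta,q,c}\mu^c_J(\1_{\{N=0\}}e^{-\beta H^{(n)}_J}) = \sum_{\sigma\in\mathcal B_0} e^{-\beta H_J(\sigma)-\beta H^{(n)}_J(\sigma)} q^{f_c(\sigma)}$, and by the shift computation in Lemma~\ref{lem:shift} we have $H_J(\sigma)+H^{(n)}_J(\sigma)=H_J(\tau^{-n}\sigma)$, while $f_c$ for $\sigma\in\mathcal B_0$ is a sum over the sites of ``wrong-sign'' spins. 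The first step is therefore to set up coordinates for a generic $\sigma\in\mathcal B_0$: to the right of $0$, the negative spins occur in $R\geq 0$ maximal runs of lengths $m_1,\dots,m_R\geq 1$ starting at sites $s_1<s_2<\cdots<s_R$ with the spacing constraint $s_{i+1}-s_i\geq m_i+1$ (so runs don't merge), and symmetrically to the left, $L\geq 0$ maximal runs of positive spins of lengths $\ell_1,\dots,\ell_L\geq 1$ at sites indexed by $r_1>r_2>\cdots>r_L$ (with $r_j\leq 0$) and spacing $r_{i+1}-r_i\leq -\ell_i-1$. The $N(\sigma)=0$ constraint becomes exactly $\sum_j m_j=\sum_j \ell_j$, which is why the indicator $\1_{\{\sum m_j=\sum\ell_j\}}$ appears.

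\textbf{Key steps.} First I would write $H_J(\tau^{-n}\sigma)$ in terms of the run data. Shifting by $\tau^{-n}$ moves everything left-to-right by $n$; a maximal run of negatives starting at $s_j$ with length $m_j$ contributes a domain wall at position $s_j+n-1$ (the left end) and at $s_j+m_j+n-1$ (the right end), giving a factor $e^{-\beta(J(s_j+n-1)+J(s_j+m_j+n-1))}$, and similarly each positive run on the left contributes $e^{-\beta(J(r_j+n)+J(r_j-\ell_j+n))}$. One has to be a little careful about the interface near $0$: if there is at least one negative spin to the right of $0$ (i.e. $R\geq 1$) \emph{and} $s_1>1$, then there is an extra domain wall; if $s_1=1$ the run abuts the block of positives coming from the left, and the bookkeeping of walls near the origin changes. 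This is exactly the origin of the distinction between $A^{(n)}_R$ (sum over all $s_1\geq 1$) and $a^{(n)}_R$ (sum restricted to $s_1>1$), and between $B^{(n)}_L$ and $b^{(n)}_L$. Second, I would compute $q^{f_c(\sigma)}=Q^{\sum (i-c)\1_{\{\sigma_i=-1,\,i\geq 1\}} - \sum(i-c)\1_{\{\sigma_i=1,\,i\leq 0\}}}$ in run coordinates: a negative run at $s_j$ of length $m_j$ contributes $\sum_{k=0}^{m_j-1}(s_j+k-c) = m_j s_j + \tfrac12 m_j(m_j-1) - c m_j$ to the exponent of $Q$, and similarly on the left. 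Summing over all runs, the $-c$-pieces combine into $-c(\sum m_j - \sum \ell_j)=0$ on $\{N=0\}$, so $c$ drops out entirely; the $\tfrac12 m_j(m_j-1)$ and $\tfrac12\ell_j(\ell_j-1)$ pieces pull out as the prefactors $\prod Q^{\frac12 m_j(m_j-1)}\prod Q^{\frac12\ell_j(\ell_j-1)}$, and the $m_j s_j$, $-\ell_j r_j$ pieces get absorbed into the definitions of $A^{(n)}_R,B^{(n)}_L$.

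\textbf{Putting it together.} Once the per-run factors are identified, the sum over $\sigma$ factorises: the $R=L=0$ term is the single configuration $\sigma^n$-analogue contributing $e^{-\beta J(n)}$ (one domain wall at $n$, no wrong-sign spins). For $L,R>0$, after extracting the $Q^{\frac12(\cdots)}$ prefactors and imposing $\1_{\{\sum m_j=\sum\ell_j\}}$, the remaining sum over positions $\{s_i\}$ and $\{r_i\}$ splits as a product of a right-hand sum and a left-hand sum — \emph{except} for the single domain wall straddling the origin, whose weight depends on whether $s_1=1$ and whether $r_1=0$. Enumerating the four cases ($s_1=1$ or not, $r_1=0$ or not) and collecting the wall factors at site $n$ versus elsewhere is what produces the combination $e^{\beta J(n)}A^{(n)}_R B^{(n)}_L + (e^{-\beta J(n)}-e^{\beta J(n)}) a^{(n)}_R b^{(n)}_L$: the $A,B$ term corresponds to the ``generic'' wall placement and overcounts, and subtracting $a_R b_L$ (both runs strictly away from the origin) with the corrected weight fixes the overcount. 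The main obstacle I expect is precisely this boundary bookkeeping at the origin — getting the domain-wall count right when runs touch sites $0$ and $1$, and checking that the algebra of the four cases collapses to the stated two-term expression. The bulk factorisation and the disappearance of $c$ are routine once the run coordinates are fixed; the interface term near $0$ is where all the care is needed.
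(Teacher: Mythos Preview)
Your proposal is correct and follows essentially the same approach as the paper: parametrise $\sigma\in\Bcal_0$ by the lengths and starting positions of the maximal runs of ``wrong-sign'' spins on each side of $0$, read off the domain-wall contributions to $H_J+H_J^{(n)}=H_J\circ\tau^{-n}$ and the arithmetic-progression contributions to $f_c$, note that $c$ cancels under $N=0$, and then do the four-case analysis at the origin to obtain the $e^{\beta J(n)}A^{(n)}_RB^{(n)}_L+(e^{-\beta J(n)}-e^{\beta J(n)})a^{(n)}_Rb^{(n)}_L$ combination. Two small points where your write-up could be sharpened to match the paper exactly: first, you should note explicitly that on $\{N=0\}$ one cannot have $L=0,R>0$ or $L>0,R=0$, which is why the sum is over $L,R>0$ only; second, your description of the origin correction as ``$A,B$ overcounts and $a,b$ subtracts'' is slightly off---the cleaner way to see it (as the paper does) is that the correction factor at the origin equals $e^{\beta J(n)}$ in the three cases where at least one of $s_1=1$ or $r_1=0$ holds (because one of the run-products then already contains a spurious $e^{-\beta J(n)}$ at site $n$) and $e^{-\beta J(n)}$ only when both are strictly away (the genuine extra wall between $0$ and $1$), and this dichotomy rewrites directly as the stated combination.
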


\begin{proof}
For a configuration $\sigma\in \Bcal$ we consider the lengths $m_1,m_2,\dots,m_R$ of runs of consecutive negative spins to the right of 0. Here $m_1$ is the length of the left most run, $m_2$ the length of next run to the right and so on. 
We denote by $s_i$ the left-most vertex of the $i^{th}$ run of consecutive negative spins (the run of length $m_i$). Notice that we require $s_{i+1}-s_i\geq m_i+1$ for $i=1,\dots,R-1$, and $s_1\geq 1$ in order for these runs to be separate (i.e. separated by at least one spin with value +1). 
We can similarly consider the runs of negative spins to the left of 0 of length $\ell_1,\ell_2,\dots$ and with right-most vertices $r_1,r_2,\dots$.

A run of negative spins right of 1 or a run of positive spins left of 0 with left-most vertex $a$ and right-most vertex $b$ contributes a factor $(e^{-\beta})^{J_{a-1}+J_b}$ to $e^{-\beta H_J(\sigma)}$, with the possible exception of the run closest to 0, here the cases $s_1=1$ and $r_1=0$ must be taken into account to consider whether or not $\sigma_0=\sigma_1$. 
The $i^{th}$ run of negative spins right of 1 also contributes a factor $q^{2\big(\tfrac12 m_i(m_i-1)-c m_j +s_jm_j\big)}$ to the function $f_c(\sigma)$. There are corresponding factors for runs of positive spins left of 0.

The configuration $\tilde\sigma$ where $\tilde\sigma_i = (-1)^{\1_{\{i\leq 0\}}}$ has no consecutive runs of negative spins right of 1 or positive spins left of 0. Its measure is $\mu^c_J(\sigma)=\tfrac{e^{-\beta J(0)}}{Z^J_{\beta,q,c}}$. All other configurations have at least one such run of consecutive spins on each side of $0$ (we cannot have runs of spins on only one side of 0 as then it is not possible to satisfy $N=0$). We can now write an expression for $\mu^c_{J}(\{N=n\})$. We denote by $L$ the number of runs of consecutive positive spins to the left of 0 and by $R$ the number of runs of consecutive negative spins to the right of $0$. Writing $Q=q^2$ and using the considerations above we have the general expression
\begin{equation}
\begin{aligned}
    &Z^{J}_{\beta,q,c}\mu_{J}^c\Big(\1_{\{N=0\}}e^{-\beta H^{(n)}_{J}}\Big)=e^{-\beta J(n)} + \sum_{L,R>0}\sum_{\substack{\ell_1,\dots,\ell_L\geq 1 \\ m_1,\dots,m_{R}\geq 1}}\sum_{\substack{r_1,\dots,r_L\leq 0 \\ r_{i+1}-r_i \leq -\ell_i -1}}\sum_{\substack{s_1,\dots,s_{R}\geq 1 \\ s_{i+1}-s_i \geq m_i+1}}\1_{\big\{\sum_{j=1}^R m_j= \sum_{j=1}^L\ell_j\big\}}
    \\
    & \prod_{j=1}^L Q^{\tfrac12 \ell_j(\ell_j-1)+c\ell_j - r_j\ell_j}(e^{-\beta})^{J(r_j+n)+J(r_j-\ell_j+n)}\prod_{j=1}^{R}Q^{\tfrac12 m_j(m_j-1)-cm_j + s_j m_j}(e^{-\beta})^{J(s_j+n-1)+J(s_j+m_j+n-1)}
        \\
    & 
\quad\qquad\qquad\qquad\qquad\qquad\qquad\Big(\big(e^{\beta J(n)}-e^{-\beta J(n)}\big)\big(\1_{\{r_1<0,s_1=1\}}\hspace{-2pt}+\hspace{-2pt}\1_{\{r_1=0,s_1>1\}}\hspace{-2pt}+\hspace{-2pt}\1_{\{r_1=0,s_1=1\}}\big)+
e^{-\beta J(n)}\Big).
\end{aligned}     
\end{equation}
The factors of $Q^c$ cancel due to the condition $\sum_{j=1}^R m_j= \sum_{j=1}^L\ell_j$.
Then our expression is
\begin{equation}
\begin{aligned}
&Z^{J}_{\beta,q,c}\mu_{J}^c\hspace{-1pt}\Big(\1_{\{N=0\}}e^{-\beta H^{(n)}_{J}}\hspace{-2pt}\Big)=e^{-\beta J(n)} \hspace{-2pt}+\hspace{-2pt} \sum_{L,R>0}\sum_{\substack{\ell_1,\dots,\ell_L\geq 1 \\ m_1,\dots,m_{R}\geq 1}}\hspace{-2pt}\1_{\big\{\sum_{j=1}^R m_j= \sum_{j=1}^L\ell_j\big\}}
\\
&\quad\qquad\qquad\qquad\qquad\qquad\prod_{j=1}^L Q^{\tfrac12 \ell_j(\ell_j-1)}\prod_{j=1}^{R}Q^{\tfrac12 m_j(m_j-1)}\Big(e^{\beta J(n)}A^{(n)}_RB^{(n)}_L + \big(e^{-\beta J(n)}-e^{\beta J(n)}\big)a^{(n)}_Rb^{(n)}_L\Big).
\end{aligned}     
\end{equation}
\end{proof}
We now look at our $J(i)$'s of interest.

\begin{cor}\label{cor:J1expression1}
Take $c\in \R$, $\beta\geq0$, $q\in(0,1)$, $Q=q^2$, $y=e^{-2\beta}$ and $J:\Z\to\R$ given by $J(i)=1$ for every $i\in \Z$. We have
\begin{align*}
Z_{\beta,q,c} &= y^{1/2} + \sum_{\substack{L,R\geq0\\ L+R>0}}y^{L+R} \sum_{\substack{\ell_1,\dots,\ell_L\geq 1 \\ m_1,\dots,m_{R}\geq 1}}\prod_{j=1}^{L} \frac{Q^{\tfrac12 \ell_j(\ell_j-1)+c\ell_j+j\ell_j+\ell_{j-1}(\ell_{j}+\cdots+\ell_{L})}}{1-Q^{\ell_{j}+\cdots + \ell_{L}}} 
\\
&\qquad\prod_{j=1}^{R}\frac{Q^{\tfrac12 m_j(m_j-1)-cm_j+jm_j+m_{j-1}(m_{j}+\cdots+m_{R})}}{1-Q^{m_{j}+\cdots+m_{R}}}\Big(y^{-1/2} + \big(y^{1/2}-y^{-1/2}\big)Q^{\ell_1+\cdots+\ell_L}Q^{m_1+\cdots+m_R}\Big).  
\end{align*}
\end{cor}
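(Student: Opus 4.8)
The plan is to run the same run‑decomposition bookkeeping as in the proof of Lemma \ref{lem:Zexpression}, but applied \emph{directly} to $Z_{\beta,q,c}=\sum_{\sigma\in\mathcal B}e^{-\beta H(\sigma)}q^{f_c(\sigma)}$ (with $H=H_J$ for $J\equiv 1$), rather than to the conditioned and shifted quantity $Z^J_{\beta,q,c}\mu^c_J(\1_{\{N=0\}}e^{-\beta H^{(n)}_J})$; this is why the result carries no $\1_{\{\sum\ell=\sum m\}}$ constraint and the factors $Q^{\pm c\,(\cdot)}$ survive. Concretely, I would parametrise $\sigma\in\mathcal B$ by the maximal runs of $+1$ spins sitting among the sites $\le 0$ — their number $L$, lengths $\ell_1,\dots,\ell_L$ and right‑endpoints $r_1>\dots>r_L$, ordered from the one closest to the origin outwards, so $r_1\le 0$ and $r_j-r_{j+1}\ge \ell_j+1$ — together with the symmetric data for the runs of $-1$ spins among the sites $\ge 1$ ($R$, lengths $m_1,\dots,m_R$, left‑endpoints $1\le s_1<\dots<s_R$ with $s_{j+1}-s_j\ge m_j+1$). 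Since $J\equiv 1$, each such run contributes exactly a factor $e^{-2\beta}=y$ to $e^{-\beta H(\sigma)}$ (one per domain wall), so the only delicate point is the edge $(0,1)$: a short case analysis on whether $\sigma_0=+1$ (equivalently $L\ge 1$ and $r_1=0$) and whether $\sigma_1=-1$ (equivalently $R\ge 1$ and $s_1=1$) shows that $e^{-\beta H(\sigma)}=y^{L+R}\big(y^{-1/2}+(y^{1/2}-y^{-1/2})\1_{\{\sigma_0=-1,\ \sigma_1=+1\}}\big)$, the single leftover wall of $\sigma^{0}$ accounting for the isolated $e^{-\beta}=y^{1/2}$.

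Next I would record the contribution of each run to $q^{f_c(\sigma)}$: summing the arithmetic progression in $f_c$ over a $-1$‑run $\{s_j,\dots,s_j+m_j-1\}$ gives $Q^{m_js_j-cm_j+\frac12 m_j(m_j-1)}$, and over a $+1$‑run $\{r_j-\ell_j+1,\dots,r_j\}$ gives $Q^{-\ell_jr_j+c\ell_j+\frac12\ell_j(\ell_j-1)}$, where $Q=q^2$. Substituting these into $Z_{\beta,q,c}=\sum_\sigma e^{-\beta H(\sigma)}q^{f_c(\sigma)}$, isolating the $L=R=0$ term (the configuration $\sigma^{0}$, contributing $y^{1/2}$), and observing that for fixed $(L,R,\ell_1,\dots,\ell_L,m_1,\dots,m_R)$ the summand factorises over the endpoint data $(r_\bullet)$ and $(s_\bullet)$ — including the indicator, which splits as $\1_{\{L=0\ \text{or}\ r_1\le -1\}}\cdot\1_{\{R=0\ \text{or}\ s_1\ge 2\}}$ — reduces everything to two one‑sided geometric sums.

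Those sums I would evaluate by the standard gap substitution: writing $u_1=-r_1$ and $u_j=r_{j-1}-r_j-\ell_{j-1}-1$ turns $\sum_{(r_\bullet)}\prod_j Q^{-\ell_jr_j}$ into a product of geometric series in $u_1,\dots,u_L\ge 0$, yielding, after collecting the cross terms $\sum_{k<j}\ell_k\ell_j$ into the stated exponent via the convention $\ell_0=-1$, the factor $\prod_{j=1}^L \frac{Q^{\frac12\ell_j(\ell_j-1)+c\ell_j+j\ell_j+\ell_{j-1}(\ell_j+\cdots+\ell_L)}}{1-Q^{\ell_j+\cdots+\ell_L}}$; the $(s_\bullet)$‑sum (with $s_1\ge 1$, $m_0=0$) is identical, and the constrained versions $r_1\le -1$, resp.\ $s_1\ge 2$, simply multiply the answer by $Q^{\ell_1+\cdots+\ell_L}$, resp.\ $Q^{m_1+\cdots+m_R}$. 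Recombining, the $(r_\bullet,s_\bullet)$‑sum for fixed $(L,R,\ell,m)$ is the product of the two resulting factors times $\big(y^{-1/2}+(y^{1/2}-y^{-1/2})Q^{\ell_1+\cdots+\ell_L}Q^{m_1+\cdots+m_R}\big)$; summing over $L,R\ge 0$ with $L+R>0$ against the prefactor $y^{L+R}$ and adding the isolated $y^{1/2}$ gives exactly the claimed expression. The main obstacle is the bookkeeping at the origin in the first step — correctly tracking the $y^{\pm 1/2}$ correction across the four cases ($r_1=0$ / $r_1<0$ / $L=0$ against $s_1=1$ / $s_1>1$ / $R=0$), which is the unconditioned analogue of the $\1_{\{r_1<0,s_1=1\}}+\1_{\{r_1=0,s_1>1\}}+\1_{\{r_1=0,s_1=1\}}$ term appearing in Lemma \ref{lem:Zexpression}; everything after that is routine geometric summation.
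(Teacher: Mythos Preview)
Your proof is correct and follows essentially the same route as the paper: both argue by decomposing $\sigma\in\mathcal B$ into its maximal runs of ``wrong-sign'' spins on either side of the origin, track the Hamiltonian via the four-case analysis at the edge $(0,1)$ (your expression $y^{L+R}\big(y^{-1/2}+(y^{1/2}-y^{-1/2})\1_{\{\sigma_0=-1,\sigma_1=+1\}}\big)$ is the unconditioned counterpart of the paper's $e^{\beta J(n)}A^{(n)}_RB^{(n)}_L+(e^{-\beta J(n)}-e^{\beta J(n)})a^{(n)}_Rb^{(n)}_L$ splitting), and then reduce the endpoint sums to geometric series via the gap substitution. The only cosmetic difference is that the paper first sets up the general quantities $A^{(n)}_R,B^{(n)}_L,a^{(n)}_R,b^{(n)}_L$ in Lemma~\ref{lem:Zexpression} and then specialises, whereas you carry out the whole calculation in one pass for $J\equiv 1$; the computations themselves are identical.
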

\begin{proof}
The case $J(i)\equiv 1$ is slightly simpler than the general case treated above. We need to find an expression for $Z^{J}_{\beta,q,c}$, this can be done in a very similar way to the expression in Lemma \ref{lem:Zexpression}. We no longer have the condition $\sum_{j=1}^R m_j= \sum_{j=1}^L\ell_j$ and hence also have an extra factor of $c\big(\sum_{j=1}^L\ell_j-\sum_{j=1}^R m_j\big)$ in the exponent of $Q$. In addition, we only require that $L+R>0$ in the first sum not that $L,R>0$, as the total length of runs to the left and right of 0 no longer need to balance.

Recall the quantities $A^{(n)}_R,B^{(n)}_L,a^{(n)}_R,b^{(n)}_L$ defined in equations \eqref{eq:AR}-\eqref{eq:bl}. The sum over $r_i$'s and $s_i$'s can be computed. For $\ell_1,\dots,\ell_L\geq 1$  fixed (and assuming $L>0$, otherwise $B^{(n)}_L=b^{(n)}_L=1$) we have 
\begin{equation}\label{eq:J1BL}
B^{(n)}_L=e^{-2\beta L}\sum_{\substack{r_1,\dots,r_L\leq 0 \\ r_{i+1}-r_i \leq -\ell_i -1}}\prod_{j=1}^L Q^{- r_j\ell_j} = e^{-2\beta L} \prod_{j=0}^{L-1} \frac{Q^{(\ell_{j}+1)(\ell_{j+1}+\cdots+\ell_{L})}}{1-Q^{\ell_{j+1}+\cdots + \ell_{L}}},
\end{equation}
where we define $\ell_0:=-1$. We also have 
\begin{equation}
\begin{aligned}
b^{(n)}_L=e^{-2\beta L}\sum_{\substack{r_1,\dots,r_L\leq 0 \\ r_{i+1}-r_i \leq -\ell_i -1}}\1_{\{r_1<0\}}\prod_{j=1}^L Q^{- r_j\ell_j} &=  e^{-2\beta L}\sum_{ r_{1} < 0}Q^{-r_1(\ell_1+\cdots+\ell_L)}\prod_{j=1}^{L-1} \frac{Q^{(\ell_{j}+1)(\ell_{j+1}+\cdots+\ell_{L})}}{1-Q^{\ell_{j+1}+\cdots + \ell_{L}}} 
\\
&= e^{-2\beta L}Q^{\ell_1+\cdots + \ell_L}\prod_{j=0}^{L-1} \frac{Q^{(\ell_{j}+1)(\ell_{j+1}+\cdots+\ell_{L})}}{1-Q^{\ell_{j+1}+\cdots + \ell_{L}}}. \label{eq:J1bL}
\end{aligned}
\end{equation}
We note that  $b^{(n)}_L=B^{(n)}_LQ^{\ell_1+\cdots + \ell_L}$.
Now for $m_1,\dots,m_{R}\geq 1$ (and assuming $R>0$, otherwise $A^{(n)}_R=a^{(n)}_R=1$) we have
\begin{equation}\label{eq:J1AR}
A^{(n)}_R=e^{-2\beta R}\sum_{\substack{s_1,\dots,s_{R}\geq 1 \\ s_{i+1}-s_i \geq m_i+1}}\prod_{j=1}^{R}Q^{s_j m_j} = e^{-2\beta R} \prod_{j=0}^{R-1}\frac{Q^{(m_{j}+1)(m_{j+1}+\cdots+m_{R})}}{1-Q^{m_{j+1}+\cdots+m_{R}}},
\end{equation}
where we define $m_0:=0$. With the indicators above we have
\begin{equation}
\begin{aligned}
a^{(n)}_R=e^{-2\beta R} \sum_{\substack{s_1,\dots,s_{R}\geq 1 \\ s_{i+1}-s_i \geq m_i+1}}\1_{\{s_1>1\}}\prod_{j=1}^{R}Q^{s_j m_j} & = e^{-2\beta R} \sum_{s_1> 1}Q^{s_1(m_1+\cdots+m_R)}\prod_{j=1}^{R-1}\frac{Q^{(m_{j}+1)(m_{j+1}+\cdots+m_{R})}}{1-Q^{m_{j+1}+\cdots+m_{R}}} 
\\
&= e^{-2\beta R}Q^{m_{1}+\cdots+m_{R}} \prod_{j=0}^{R-1}\frac{Q^{(m_{j}+1)(m_{j+1}+\cdots+m_{R})}}{1-Q^{m_{j+1}+\cdots+m_{R}}}. \label{eq:J1aR}
\end{aligned}
\end{equation}
We note that $a^{(n)}_R=A^{(n)}_RQ^{m_{1}+\cdots+m_{R}}$.
Putting this together gives
\begin{equation} \label{eq:Z2expression}
\begin{aligned}
Z_{\beta,q,c} 
= e^{-\beta} + \sum_{\substack{L,R\geq0\\ L+R>0}} \sum_{\substack{\ell_1,\dots,\ell_L\geq 1 \\ m_1,\dots,m_{R}\geq 1}}\prod_{j=1}^{L} Q^{\tfrac12 \ell_j(\ell_j-1)+c\ell_j} &\prod_{j=1}^{R}Q^{\tfrac12 m_j(m_j-1)-cm_j} 
\\
&A^{(n)}_RB^{(n)}_L\Big(e^\beta + \big(e^{-\beta}-e^\beta\big)Q^{\ell_1+\cdots+\ell_L+m_1+\cdots+m_R}\Big).
\end{aligned}    
\end{equation}
Now we insert our expression for $A^{(n)}_R$ and $B^{(n)}_L$ in \eqref{eq:J1AR} and \eqref{eq:J1BL} and also use that 
\begin{align*}
\sum_{j=1}^{L}(\ell_{j-1}+1)(\ell_{j}+\cdots+\ell_L)&=\sum_{j=1}^{L}j\ell_j+\sum_{j=1}^{L}\ell_{j-1}(\ell_{j}+\cdots+\ell_L),
\\
\sum_{j=1}^R(m_{j-1}+1)(m_j+\cdots + m_R)&= \sum_{j1}^Rjm_j + \sum_{j=1}^Rm_{j-1}(m_j+\cdots+m_R).
\end{align*}
Our final expression for $J(i)\equiv 1$, as claimed, is 
\begin{equation} \label{eq:Z3expression}
\begin{aligned}
Z_{\beta,q,c} &= e^{-\beta} + \sum_{\substack{L,R\geq0\\ L+R>0}}(e^{-\beta})^{2(L+R)} \sum_{\substack{\ell_1,\dots,\ell_L\geq 1 \\ m_1,\dots,m_{R}\geq 1}}\prod_{j=1}^{L} \frac{Q^{\tfrac12 \ell_j(\ell_j-1)+c\ell_j+j\ell_j+\ell_{j-1}(\ell_{j}+\cdots+\ell_{L})}}{1-Q^{\ell_{j}+\cdots + \ell_{L}}} 
\\
&\qquad\prod_{j=1}^{R}\frac{Q^{\tfrac12 m_j(m_j-1)-cm_j+jm_j+m_{j-1}(m_{j}+\cdots+m_{R})}}{1-Q^{m_{j}+\cdots+m_{R}}}\Big(e^\beta + \big(e^{-\beta}-e^\beta\big)Q^{\ell_1+\cdots+\ell_L}Q^{m_1+\cdots+m_R}\Big).
\end{aligned}    
\end{equation}
\end{proof}
\begin{cor}\label{cor:Jiexpression1}
Take $c\in \R$, $\beta\geq0$, $q\in(0,1)$, $Q=q^2$, $y=e^{-2\beta}$, and $J:\Z\to\R$ given by $J(i)=i$. We have
\begin{align*}
    &Z_{\beta,q,c}^{J}\mu_{J}^c(\1_{\{N=0\}}e^{-n\beta H_1}) = e^{-n\beta} + \sum_{L,R>0} y^{nL+(n-1)R}\sum_{\substack{\ell_1,\dots,\ell_L\geq 1 \\ m_1,\dots,m_{R}\geq 1}}\1_{\big\{\sum_{j=1}^R m_j= \sum_{j=1}^L\ell_j\big\}} 
    \\
    &\qquad\qquad\prod_{j=1}^{L} \frac{Q^{\tfrac12 \ell_{j}(\ell_{j}-1)}\big(y^{-(L-j+1)}Q^{\ell_{j}+\cdots + \ell_{L}}\big)^{\ell_{j-1}+1}}{1-y^{-(L-j+1)}Q^{\ell_{j}+\cdots + \ell_{L}}} \prod_{j=1}^{R}\frac{Q^{\tfrac12 m_{j}(m_{j}-1)}\big(y^{R-j+1}Q^{m_{j}+\cdots+m_{R}}\big)^{m_{j-1}+1}}{1-y^{R-j+1}Q^{m_{j}+\cdots+m_{R}}}
    \\
    &\qquad\qquad\qquad\qquad\qquad\qquad\qquad\qquad\qquad\qquad\bigg(y^{-n/2}+\big(y^{n/2}-y^{-n/2}\big)y^{R-L}Q^{\ell_1+\cdots + \ell_L}Q^{m_{1}+\cdots+m_{R}}\bigg). 
\end{align*}
\end{cor}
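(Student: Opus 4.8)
The plan is to specialise Lemma \ref{lem:Zexpression} to $J(i)=i$ and then to evaluate the four auxiliary sums $A^{(n)}_R,B^{(n)}_L,a^{(n)}_R,b^{(n)}_L$ in closed form, exactly in the spirit of the $J\equiv1$ computation of Corollary \ref{cor:J1expression1}. First I would record the two elementary reductions: for $J(i)=i$ one has $H^{(n)}_J(\sigma)=\sum_i\big((i+n)-i\big)\1_{\{\sigma_i\neq\sigma_{i+1}\}}=nH_1(\sigma)$, where $H_1$ is the $J\equiv1$ Hamiltonian, so $e^{-\beta H^{(n)}_J}=e^{-n\beta H_1}$ and the left side of Lemma \ref{lem:Zexpression} is exactly $Z^J_{\beta,q,c}\mu^c_J(\1_{\{N=0\}}e^{-n\beta H_1})$; and $J(n)=n$, so with $y=e^{-2\beta}$ we get $e^{\pm\beta J(n)}=y^{\mp n/2}$ and the constant term $e^{-\beta J(n)}=e^{-n\beta}$, matching the statement.

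The core step is the explicit computation of the sums \eqref{eq:AR}--\eqref{eq:bl} with $J(i)=i$. A run right of $0$ now contributes $(e^{-\beta})^{(s_j+n-1)+(s_j+m_j+n-1)}=y^{\,s_j+n-1+m_j/2}$, so that
\begin{equation*}
A^{(n)}_R=y^{(n-1)R}\,y^{\frac12\sum_j m_j}\sum_{\substack{s_1,\dots,s_R\geq 1\\ s_{i+1}-s_i\geq m_i+1}}\prod_{j=1}^R\big(Q^{m_j}y\big)^{s_j},
\end{equation*}
and the remaining sum is a nested geometric series of exactly the shape of the one computed for \eqref{eq:J1AR}: summing $s_R$, then $s_{R-1}$, \dots, then $s_1$ (each over a half-line, convergent since $y^{R-j+1}Q^{m_j+\cdots+m_R}<1$) produces $\prod_{j=1}^R\frac{(y^{R-j+1}Q^{m_j+\cdots+m_R})^{m_{j-1}+1}}{1-y^{R-j+1}Q^{m_j+\cdots+m_R}}$ with the convention $m_0=0$. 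The mirror computation, now summing over the decreasing $r_j\leq 0$ (whose geometric series converge precisely under the summability hypothesis of Lemma \ref{lem:nearestneighbourconcentration}), gives $B^{(n)}_L=y^{nL}\,y^{-\frac12\sum_j\ell_j}\prod_{j=1}^L\frac{(y^{-(L-j+1)}Q^{\ell_j+\cdots+\ell_L})^{\ell_{j-1}+1}}{1-y^{-(L-j+1)}Q^{\ell_j+\cdots+\ell_L}}$ with $\ell_0=-1$. For $a^{(n)}_R$ and $b^{(n)}_L$ I would avoid redoing the summation and instead shift the index by one ($s_j\mapsto s_j-1$, resp. $r_j\mapsto r_j+1$): this removes the indicator $\1_{\{s_1>1\}}$ (resp. $\1_{\{r_1<0\}}$) and, since the $e^{-\beta}$-weight now depends on $s_j,r_j$, picks up the extra factors $a^{(n)}_R=Q^{m_1+\cdots+m_R}y^{R}A^{(n)}_R$ and $b^{(n)}_L=Q^{\ell_1+\cdots+\ell_L}y^{-L}B^{(n)}_L$ (compare $a^{(n)}_R=A^{(n)}_RQ^{m_1+\cdots+m_R}$ in the $J\equiv1$ case, \eqref{eq:J1aR}).

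It then remains to substitute these four closed forms into Lemma \ref{lem:Zexpression} and collect. Inside the double sum the indicator forces $\sum_j m_j=\sum_j\ell_j$, which cancels the factors $y^{\frac12\sum_j m_j}$ and $y^{-\frac12\sum_j\ell_j}$ against each other (and, as already in the proof of Lemma \ref{lem:Zexpression}, removes all $Q^c$). Using $a^{(n)}_Rb^{(n)}_L=Q^{\sum_j m_j+\sum_j\ell_j}\,y^{R-L}A^{(n)}_RB^{(n)}_L$ one factors $A^{(n)}_RB^{(n)}_L$ out of the bracket $e^{\beta J(n)}A^{(n)}_RB^{(n)}_L+\big(e^{-\beta J(n)}-e^{\beta J(n)}\big)a^{(n)}_Rb^{(n)}_L$, which becomes $y^{-n/2}+\big(y^{n/2}-y^{-n/2}\big)y^{R-L}Q^{\ell_1+\cdots+\ell_L}Q^{m_1+\cdots+m_R}$, the last bracket of the statement. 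The residual powers $y^{(n-1)R}$ (from $A^{(n)}_R$) and $y^{nL}$ (from $B^{(n)}_L$) multiply to the prefactor $y^{nL+(n-1)R}$, while the $Q^{\frac12\ell_j(\ell_j-1)}$, $Q^{\frac12 m_j(m_j-1)}$ coming from Lemma \ref{lem:Zexpression} slot into the numerators of the two products; this yields the claimed identity.

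I expect the main obstacle to be the bookkeeping in the two nested geometric summations. One has to track the running $y$-exponents $y^{R-j+1}$ and $y^{-(L-j+1)}$ that now multiply the $Q$-powers (these are new relative to \eqref{eq:J1AR}--\eqref{eq:J1bL}, where a run's weight did not depend on its position), keep the $(\ell_{j-1}+1)$-st and $(m_{j-1}+1)$-st power conventions with $\ell_0=-1,m_0=0$ straight, and carefully separate the $y$-powers that get pulled out front (ultimately $y^{nL+(n-1)R}$) from those that cancel via $\sum_j m_j=\sum_j\ell_j$. A minor secondary point is convergence of the $r_j$-summation for the positive-spin runs, which is exactly the content of the summability condition in Lemma \ref{lem:nearestneighbourconcentration}.
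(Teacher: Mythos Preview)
Your proposal is correct and follows essentially the same route as the paper's proof: specialise Lemma~\ref{lem:Zexpression} to $J(i)=i$, compute the four auxiliary sums $A^{(n)}_R,B^{(n)}_L,a^{(n)}_R,b^{(n)}_L$ as nested geometric series (now with the extra running factors $y^{R-j+1}$ and $y^{-(L-j+1)}$), observe the relations $a^{(n)}_R=y^RQ^{\sum_j m_j}A^{(n)}_R$ and $b^{(n)}_L=y^{-L}Q^{\sum_j\ell_j}B^{(n)}_L$, and then substitute back and use the indicator $\sum_j m_j=\sum_j\ell_j$ to cancel the $y^{\pm\tfrac12\sum}$ factors. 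The paper does the same computations in the same order; your index-shift shortcut for $a^{(n)}_R,b^{(n)}_L$ is a minor stylistic variant of the paper's direct summation.
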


\begin{proof}
From Lemma \ref{lem:shift} we need to find an expression for $Z_{\beta,q,c}^{J}\mu_{J}^c(\1_{\{N=0\}}e^{-n\beta H_1})$. We first define $y=e^{-2\beta}$.
We can compute the sums over $r_j$'s and $s_j$'s in the $A^{(n)}_R,a^{(n)}_R,B^{(n)}_L,b^{(n)}_L$ terms  defined as in equations \eqref{eq:AR}-\eqref{eq:bl} so that we now need to compute the following sums.
\begin{equation}
  B^{(n)}_L=  \sum_{\substack{r_1,\dots,r_L\leq 0 \\ r_{i+1}-r_i \leq -\ell_i -1}}\prod_{j=1}^L (yQ^{-\ell_j})^{r_j}y^{n-\tfrac{\ell_j}{2}} = y^{Ln-\tfrac{1}{2}\sum_{j=1}^L\ell_j}\prod_{j=0}^{L-1} \frac{\big(y^{-(L-j)}Q^{\ell_{j+1}+\cdots + \ell_{L}}\big)^{\ell_{j}+1}}{1-y^{-(L-j)}Q^{\ell_{j+1}+\cdots + \ell_{L}}}
\end{equation}
where $\ell_0=-1$ as above. With the indicators above we have 
\begin{equation}
\begin{aligned}
 b^{(n)}_L  &=   \sum_{\substack{r_1,\dots,r_L\leq 0 \\ r_{i+1}-r_i \leq -\ell_i -1}}\1_{\{r_1<0\}}\prod_{j=1}^L (yQ^{-\ell_j})^{r_j}y^{n-\tfrac{\ell_j}{2}}  
 \\
 &= y^{Ln-\tfrac{1}{2}\sum_{j=1}^L\ell_j}y^{-L}Q^{\ell_{L}+\cdots + \ell_{1}}\prod_{j=0}^{L-1} \frac{\big(y^{-(L-j)}Q^{\ell_{j+1}+\cdots + \ell_{L}}\big)^{\ell_{j}+1}}{1-y^{-(L-j)}Q^{\ell_{j+1}+\cdots + \ell_{L}}}
 \\
 & = y^{-L}Q^{\ell_1+\cdots\ell_L}B^{(n)}_L.
 \end{aligned}
\end{equation}
We also have
\begin{equation}
A^{(n)}_R  =  \sum_{\substack{s_1,\dots,s_{R}\geq 1 \\ s_{i+1}-s_i \geq m_i+1}}\prod_{j=1}^{R}(yQ^{m_j})^{s_j} y^{n-1+\tfrac{m_j}{2}}= y^{R(n-1)+\tfrac{1}{2}\sum_{j=1}^Rm_j}\prod_{j=0}^{R-1}\frac{(y^{R-j}Q^{m_{j+1}+\cdots+m_{R}})^{m_{j}+1}}{1-y^{R-j}Q^{m_{j+1}+\cdots+m_{R}}}
\end{equation}
where $m_0=0$. With the indicators above we have 
\begin{equation}
\begin{aligned}
   a^{(n)}_R   =  &\sum_{\substack{s_1,\dots,s_{R}\geq 1 \\ s_{i+1}-s_i \geq m_i+1}}\1_{\{s_1>1\}}\prod_{j=1}^{R}(yQ^{m_j})^{s_j}y^{n-1+\tfrac{m_j}{2}} 
   \\
   = &y^{R(n-1)+\tfrac{1}{2}\sum_{j=1}^Rm_j}y^{R}Q^{m_{R}+\cdots+m_{1}}\prod_{j=0}^{R-1}\frac{(y^{j}Q^{m_{R}+\cdots+m_{R-j+1}})^{m_{R-j}+1}}{1-y^{j}Q^{m_{R}+\cdots+m_{R-j+1}}}
   \\
   & = y^RQ^{m_1+\cdots+m_R}A^{(n)}_R.
\end{aligned}
\end{equation}
So that our expression for the case $J(i)=i$ is
\begin{equation}
\begin{aligned}
    &Z_{\beta,q,c}^{J}\mu_{J}^c(\1_{\{N=0\}}e^{-n\beta H_1}) = e^{-n\beta} + \sum_{L,R>0} y^{nL+(n-1)R}\sum_{\substack{\ell_1,\dots,\ell_L\geq 1 \\ m_1,\dots,m_{R}\geq 1}}\1_{\big\{\sum_{j=1}^R m_j= \sum_{j=1}^L\ell_j\big\}} 
    \\
    &\qquad\qquad\prod_{j=1}^{L} \frac{Q^{\tfrac12 \ell_{j}(\ell_{j}-1)}\big(y^{-(L-j+1)}Q^{\ell_{j}+\cdots + \ell_{L}}\big)^{\ell_{j-1}+1}}{1-y^{-(L-j+1)}Q^{\ell_{j}+\cdots + \ell_{L}}} \prod_{j=1}^{R}\frac{Q^{\tfrac12 m_{j}(m_{j}-1)}\big(y^{R-j+1}Q^{m_{j}+\cdots+m_{R}}\big)^{m_{j-1}+1}}{1-y^{R-j+1}Q^{m_{j}+\cdots+m_{R}}}
    \\
    &\qquad\qquad\qquad\qquad\qquad\qquad\qquad\qquad\qquad\qquad\bigg(y^{-n/2}+\big(y^{n/2}-y^{-n/2}\big)y^{R-L}Q^{\ell_1+\cdots + \ell_L}Q^{m_{1}+\cdots+m_{R}}\bigg),
\end{aligned}    
\end{equation} 
recalling that we defined $\ell_0=-1$ and $m_0=0$.
\end{proof}
We now state the identities derived directly from $\mu_J^c(\{N=n\})$.
\begin{lem}\label{lem:Zexpression2}
Take $c\in \R$, $\beta\geq0$, $q\in(0,1)$, $Q=q^2$, and $J:\Z\to\R$. Let $A^{(n)}_R,B^{(n)}_L,a^{(n)}_R,b^{(n)}_L$ be defined as in equations \eqref{eq:AR}-\eqref{eq:bl}. For any $n\in \Z$ we have
\begin{equation*}
    \begin{aligned}
Z^{J}_{\beta,q,c}\mu_{J}^c\Big(N=n\Big) = &e^{-\beta J(0)} + \sum_{L,R>0}\sum_{\substack{\ell_1,\dots,\ell_L\geq 1 \\ m_1,\dots,m_{R}\geq 1}}\1_{\big\{\sum_{j=1}^R m_j - \sum_{j=1}^L\ell_j =n\big\}} Q^{-cn} 
\\
&\qquad\prod_{j=1}^L Q^{\tfrac12 \ell_j(\ell_j-1)}\prod_{j=1}^{R}Q^{\tfrac12 m_j(m_j-1)}\Big(e^{\beta J(0)}A^{(0)}_RB^{(0)}_L - \big(e^{\beta J(0)}-e^{-\beta J(0)}\big) a^{(0)}_Rb^{(0)}_L\Big).
    \end{aligned}
\end{equation*}
\end{lem}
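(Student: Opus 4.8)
The plan is to run the ``decomposition by runs'' argument from the proof of Lemma~\ref{lem:Zexpression} directly on $Z^{J}_{\beta,q,c}\,\mu^c_J(N=n)=\sum_{\sigma\in\mathcal{B}_n}e^{-\beta H_J(\sigma)}q^{f_c(\sigma)}$, rather than on the shifted quantity $\mu^c_J\big(\1_{\{N=0\}}e^{-\beta H^{(n)}_J}\big)$. First I would parametrise a configuration $\sigma\in\mathcal{B}$ by its runs of consecutive $+1$ spins at sites $\le0$, recorded by the lengths $\ell_1,\dots,\ell_L\ge1$ and the right endpoints $r_1>r_2>\cdots>r_L\le0$ (with $r_{i+1}-r_i\le-\ell_i-1$), together with its runs of consecutive $-1$ spins at sites $\ge1$, recorded by the lengths $m_1,\dots,m_R\ge1$ and the left endpoints $1\le s_1<s_2<\cdots<s_R$ (with $s_{i+1}-s_i\ge m_i+1$), plus the single configuration $\tilde\sigma$ with $\tilde\sigma_i=-1$ for $i\le0$ and $\tilde\sigma_i=+1$ for $i\ge1$, which has no runs and supplies the leading term $e^{-\beta J(0)}$. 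This is a bijection. The observation replacing ``$\sum_j m_j=\sum_j\ell_j$'' is that $N(\sigma)$ is the number of $-1$ spins at sites $\ge1$ minus the number of $+1$ spins at sites $\le0$, i.e. $N(\sigma)=\sum_j m_j-\sum_j\ell_j$, so imposing $N(\sigma)=n$ yields the indicator $\1_{\{\sum_j m_j-\sum_j\ell_j=n\}}$ (and $\tilde\sigma$ contributes only when $n=0$; for $n\ne0$ one allows $L=0$ or $R=0$ with the conventions $A^{(0)}_0=B^{(0)}_0=a^{(0)}_0=b^{(0)}_0=1$).

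Next I would record, verbatim from the proof of Lemma~\ref{lem:Zexpression}, each run's contribution. A $-1$-run occupying sites $s,\dots,s+m-1$ contributes a factor $Q^{\tfrac12 m(m-1)-cm+sm}$ to $q^{f_c(\sigma)}$ and, away from the boundary edge $(0,1)$, a factor $(e^{-\beta})^{J(s-1)+J(s+m-1)}$ to $e^{-\beta H_J(\sigma)}$; a $+1$-run occupying sites $r-\ell+1,\dots,r$ contributes $Q^{\tfrac12\ell(\ell-1)+c\ell-r\ell}$ and $(e^{-\beta})^{J(r)+J(r-\ell)}$. Only two things change relative to Lemma~\ref{lem:Zexpression}: the $J$-arguments now carry no shift by $n$, so that after carrying out the nested geometric sums over $(s_j)$ and $(r_j)$ one obtains $A^{(0)}_R,B^{(0)}_L$ and their lowercase variants from \eqref{eq:AR}--\eqref{eq:bl} rather than the superscript-$n$ versions; and, since now $\sum_j m_j-\sum_j\ell_j=n$ instead of $0$, the factors $Q^{-cm_j}$ and $Q^{c\ell_j}$ no longer cancel but collect into the overall prefactor $Q^{c(\sum_j\ell_j-\sum_j m_j)}=Q^{-cn}$ displayed in the statement.

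The one genuinely delicate point — the same one encountered in Lemma~\ref{lem:Zexpression} — is the bookkeeping at the boundary edge $(0,1)$. The naive product over runs automatically inserts a factor $(e^{-\beta})^{J(0)}$ from the left end of the first $-1$-run whenever $s_1=1$, and/or from the right end of the first $+1$-run whenever $r_1=0$, whereas whether $\sigma_0\ne\sigma_1$ actually holds is decided by the four cases in $\{r_1=0\text{ or }r_1<0\}\times\{s_1=1\text{ or }s_1>1\}$ (with $L=0$ read as ``$r_1<0$'' and $R=0$ as ``$s_1>1$''). Since $\sigma_0=+1$ exactly when $r_1=0$ and $\sigma_1=-1$ exactly when $s_1=1$, a short check shows that in the three cases other than ``$r_1<0$ and $s_1>1$'' the product over-counts the edge $(0,1)$ and must be multiplied by $e^{\beta J(0)}$, while in the remaining case (where $(0,1)$ is a genuine disagreement worth $J(0)$ that the product misses) it must be multiplied by $e^{-\beta J(0)}$. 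Because the sum over $(s_j)$ restricted to $s_1>1$ is exactly $a^{(0)}_R$ and the sum over $(r_j)$ restricted to $r_1<0$ is exactly $b^{(0)}_L$, summing this correction over all configurations produces precisely the bracketed factor $e^{\beta J(0)}A^{(0)}_RB^{(0)}_L-\big(e^{\beta J(0)}-e^{-\beta J(0)}\big)a^{(0)}_Rb^{(0)}_L$, and collecting everything gives the asserted identity.

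I expect the only real work to be this four-case boundary analysis, together with checking that the surviving $Q^c$-powers coalesce into $Q^{-cn}$; everything else is a transcription of the proof of Lemma~\ref{lem:Zexpression} with the shift by $n$ removed and the balance constraint loosened from $\sum m_j=\sum\ell_j$ to $\sum m_j-\sum\ell_j=n$.
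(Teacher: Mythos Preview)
Your approach is correct and is exactly what the paper does: it states Lemma~\ref{lem:Zexpression2} without proof, noting only that it is ``derived directly from $\mu^c_J(\{N=n\})$ using the same method'' as Lemma~\ref{lem:Zexpression}. Your run-decomposition, the identification $N(\sigma)=\sum_j m_j-\sum_j\ell_j$, the appearance of $A^{(0)}_R,B^{(0)}_L,a^{(0)}_R,b^{(0)}_L$ (no shift) together with the residual factor $Q^{-cn}$, and your four-case boundary bookkeeping at the edge $(0,1)$ all match the paper's intended argument; in particular your boundary correction rewrites as $e^{\beta J(0)}-(e^{\beta J(0)}-e^{-\beta J(0)})\1_{\{r_1<0,\,s_1>1\}}$, which after summing gives precisely the bracket $e^{\beta J(0)}A^{(0)}_RB^{(0)}_L-(e^{\beta J(0)}-e^{-\beta J(0)})a^{(0)}_Rb^{(0)}_L$. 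You are also right to flag that the leading term $e^{-\beta J(0)}$ (from $\tilde\sigma$) and the restriction $L,R>0$ are literally correct only for $n=0$; for $n\neq0$ one must allow $L=0$ or $R=0$ under the conventions $A^{(0)}_0=B^{(0)}_0=a^{(0)}_0=b^{(0)}_0=1$, which is a point the paper leaves implicit.
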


\begin{cor}\label{cor:Jiexpression2}
Take $c\in \R$, $\beta\geq0$, $q\in(0,1)$, $Q=q^2$, $y=e^{-2\beta}$, and $J:\Z\to\R$ given by $J(i)=i$. We have
\small
\begin{align*}
& Z_{\beta,q,c}^{J}\mu_{J}^c(N=n) = 1 + \sum_{L,R>0} y^{\tfrac12 n}\sum_{\substack{\ell_1,\dots,\ell_L\geq 1 \\ m_1,\dots,m_{R}\geq 1}}\1_{\big\{\sum_{j=1}^R m_j- \sum_{j=1}^L\ell_j=n\big\}} Q^{-cn} y^{\tfrac12 R(R+1)-\tfrac12 L(L+1)}
\\
&\prod_{j=1}^{L} \frac{Q^{\tfrac12 \ell_{j}(\ell_{j}-1) + j\ell_j + \ell_{j-1}(\ell_j + \cdots + \ell_L)}y^{-(L+1)\ell_{j-1} + j\ell_{j-1}}}{1-y^{-(L-j+1)}Q^{\ell_{j}+\cdots + \ell_{L}}} 
\prod_{j=1}^{R}\frac{Q^{\tfrac12 m_{j}(m_{j}-1)+ jm_j+m_{j-1}(m_j+\cdots+m_R)}y^{(R+1)m_{j-1}-jm_{j-1}}}{1-y^{R-j+1}Q^{m_{j}+\cdots+m_{R}}}
\end{align*}
\end{cor}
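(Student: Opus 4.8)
The plan is to specialise the general identity of Lemma~\ref{lem:Zexpression2} to the interaction $J(i)=i$. The one feature that makes this case clean is that $J(0)=0$: then $e^{\beta J(0)}=e^{-\beta J(0)}=1$, so the constant term $e^{-\beta J(0)}$ in Lemma~\ref{lem:Zexpression2} becomes $1$, and the coefficient $e^{\beta J(0)}-e^{-\beta J(0)}$ of the boundary correction term $a^{(0)}_Rb^{(0)}_L$ is exactly $0$. (Conceptually: the correction records whether $\sigma_0=\sigma_1$, i.e.\ whether there is a disagreement on the edge through $0$, and that disagreement carries the energy weight $(e^{-\beta})^{J(0)}=1$, so it does not matter.) Hence for $J(i)=i$ the bracket in Lemma~\ref{lem:Zexpression2} collapses to $A^{(0)}_RB^{(0)}_L$, and the whole task reduces to evaluating $A^{(0)}_R$ and $B^{(0)}_L$ explicitly and then tidying up exponents.

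For the evaluation I would rerun the geometric-series computation from the proof of Corollary~\ref{cor:Jiexpression1}, but with the shift parameter set to $0$. Writing $y=e^{-2\beta}$, so $e^{-\beta}=y^{1/2}$, the energy weight attached to a run is, for $J(i)=i$, a power of $y$ affine in the run's position: a negative run on the right starting at $s_j$ of length $m_j$ contributes $y^{s_j+m_j/2-1}$, and a positive run on the left ending at $r_j$ of length $\ell_j$ contributes $y^{r_j-\ell_j/2}$. Pairing these with the $Q$-weights $Q^{s_jm_j}$, $Q^{-r_j\ell_j}$ from $f_c$, the constrained sums over $s_1<\dots<s_R$ and $r_1>\dots>r_L$ telescope into products of geometric series, giving
$$
A^{(0)}_R=y^{\frac12\sum_j m_j-R}\prod_{j=0}^{R-1}\frac{\bigl(y^{R-j}Q^{m_{j+1}+\cdots+m_R}\bigr)^{m_j+1}}{1-y^{R-j}Q^{m_{j+1}+\cdots+m_R}},\qquad
B^{(0)}_L=y^{-\frac12\sum_j \ell_j}\prod_{j=0}^{L-1}\frac{\bigl(y^{-(L-j)}Q^{\ell_{j+1}+\cdots+\ell_L}\bigr)^{\ell_j+1}}{1-y^{-(L-j)}Q^{\ell_{j+1}+\cdots+\ell_L}},
$$
with $\ell_0=-1$, $m_0=0$ (the relations $a^{(0)}_R=y^RQ^{m_1+\cdots+m_R}A^{(0)}_R$ and $b^{(0)}_L=y^{-L}Q^{\ell_1+\cdots+\ell_L}B^{(0)}_L$ are not even needed, since that term has dropped out).

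Substituting these into Lemma~\ref{lem:Zexpression2}, I would absorb the factors $\prod_j Q^{\frac12\ell_j(\ell_j-1)}$ and $\prod_j Q^{\frac12 m_j(m_j-1)}$ into the two products, reindex each product from $j\in\{0,\dots,L-1\}$ to $j\in\{1,\dots,L\}$ (and similarly for $R$), and apply the summation-by-parts identity
$$
\sum_{j=1}^{L}(\ell_{j-1}+1)(\ell_j+\cdots+\ell_L)=\sum_{j=1}^{L}j\ell_j+\sum_{j=1}^{L}\ell_{j-1}(\ell_j+\cdots+\ell_L)
$$
(already used in the proof of Corollary~\ref{cor:J1expression1}), together with its analogue for the $m_j$, to bring the $Q$-exponent of each numerator into the stated form $\tfrac12\ell_j(\ell_j-1)+j\ell_j+\ell_{j-1}(\ell_j+\cdots+\ell_L)$, and likewise on the $R$-side. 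Finally I would collect the powers of $y$: those depending on $\ell_{j-1}$ or $m_{j-1}$ stay inside the products, rewritten as $y^{-(L+1)\ell_{j-1}+j\ell_{j-1}}$ and $y^{(R+1)m_{j-1}-jm_{j-1}}$, while the remaining ``constant'' powers — the prefactors $y^{\pm\frac12\sum}$ of $A^{(0)}_R,B^{(0)}_L$ together with the $y$-content of the ``$+1$''s inside $\ell_{j-1}+1$ and $m_{j-1}+1$ — assemble, using the constraint $\sum_j m_j-\sum_j\ell_j=n$ to turn $\tfrac12(\sum m_j-\sum\ell_j)$ into $\tfrac n2$, into the overall factor $y^{n/2}\,y^{\frac12R(R+1)-\frac12L(L+1)}$; the factor $Q^{-cn}$ is carried over unchanged from Lemma~\ref{lem:Zexpression2}.

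I expect the only genuinely delicate step to be exactly this last consolidation of exponents: there are four independent bookkeeping sources of $y$- and $Q$-powers (the field weight $f_c$, the energy weight, the geometric-series numerators, and the reindexing with the conventions $\ell_0=-1$, $m_0=0$), and combining them into the compact form of the statement without an index or sign slip is the crux. Everything else is either the run-length bookkeeping already behind Lemma~\ref{lem:Zexpression2} or the trivial observation that $J(0)=0$ annihilates the correction term.
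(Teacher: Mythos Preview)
Your approach is correct and is exactly the one the paper intends: Corollary~\ref{cor:Jiexpression2} is stated without proof immediately after Lemma~\ref{lem:Zexpression2}, and the derivation is meant to parallel that of Corollary~\ref{cor:Jiexpression1}, specialising the general run-length expression to $J(i)=i$ and exploiting $J(0)=0$ to kill the $a^{(0)}_Rb^{(0)}_L$ correction. Your identification of the exponent consolidation as the only delicate step is apt; carrying it through carefully is all that remains.
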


\section{Combinatorial interpretation of identities}
We now look to give combinatorial interpretations to the identities found by equating measures for the family of Ising process and equivalent family of particle systems. We will consider the identities for our interaction functions of interest, $J(i)\equiv1$ and $J(i)=i$, and show that they can be viewed as equivalences of generating functions for certain integer and Frobenius partitions.

\subsection{Combinatorial Identity for $J(i)\equiv1$}\label{sec: comb j is 1}~

\par Let us first consider the case when $J(i)\equiv 1$, using equation \eqref{eq: J=1 iden} and Corollary \ref{cor:J1expression1}. If we let $Q\defeq q^2$, $z\defeq q^{-2c}$ and $y\defeq e^{-2\beta}$ and rearrange we have the following identity.
\constident*
\begin{rem} This holds as a formal identity. This can be seen from the combinatorial interpretation in terms of generating functions, discussed below. The probabilistic nature of the proof for Theorem \ref{thm: J is 1 identity} means it holds for $Q\in(0,1)$, $z>0$ and $y\in(0,1]$, but the identity should hold for any $Q,z,y\in\mathbb{R}$ with $z,y\neq 0$.
\end{rem}
Using Theorem \ref{thm: J is 1 identity} and the Jacobi triple product identity we can write the Ising partition function as a product. We note that this form seems non-obvious by just studying the normalising factor of the measure.
\partitionfunc*
\begin{proof}
Recall that the Jacobi triple product identity, for $Q\in(0,1)$ and $z\neq 0$,     $$\sum\limits_{m\in\mathbb{Z}}Q^{\frac{m(m+1)}{2}}z^m=\prod\limits_{i=1}^\infty (1-Q^i)(1+Q^i z)(1+Q^{i-1}z^{-1}).$$
By substituting this into Theorem \ref{thm: J is 1 identity} we have that,     $$Z(Q,z,y)=\prod\limits_{i=1}^\infty(1+(y-1)Q^i)(1+Q^iz)(1+Q^{i-1}z^{-1}).$$
Thus the normalising factor of the Ising blocking measure (when $J(i)\equiv 1$) is given by,
    $$Z_{\beta,q,c}=e^{-\beta}Z(q^2,q^{-2c},e^{-2\beta})=e^{-\beta}\prod\limits_{i=1}^\infty(1+(e^{-2\beta}-1)q^{2i})(1+q^{2(i-c)})(1+q^{2(i-1+c)}).$$   
\end{proof}
\par Now we will discuss the combinatorial interpretation of the identity in Theorem \ref{thm: J is 1 identity}. First let us rearrange to give,
\begin{equation}\label{eq: J=1 iden rearranged}
    \sum\limits_{m\in\mathbb{Z}}Q^{\frac{m(m+1)}{2}}z^m\prod\limits_{i=1}^\infty\frac{1+(y-1)Q^i}{1-Q^i}=Z(Q,z,y).
\end{equation}
We will first consider the left side of this identity. If we look at the constant term in $z$, that is
\begin{equation}
\prod\limits_{i=1}^\infty\frac{1+(y-1)Q^i}{1-Q^i},
\end{equation}
we can view this product as $\sum\limits_{n\geq 0,k\geq1}a_{n,k}Q^ny^k$ where $a_{n,k}$ counts the number of integer partitions of $n$ with $k$ different sizes of part. That is for any integer partition of length $l>0$, $(\lambda_1,...,\lambda_l)$, with $\lambda_i\geq \lambda_{i+1}$, the number of different sizes of part is $k=1+\sum\limits_{i=1}^{l-1}\1_{\{\lambda_i>\lambda_{i+1}\}}$. This is easy to see by going back to our calculation in Remark \ref{rem: pi},
\begin{equation}
\prod\limits_{i=1}^\infty\left\{\sum\limits_{z=0}^\infty y^{\1_{\{z>0\}}}(Q^i)^z\right\}=\prod\limits_{i=1}^\infty \frac{1+(y-1)Q^i}{1-Q^i}.
\end{equation}
\par\noindent For example, consider integer partitions of $n=7$, we know that there are 15 in total, and we see that $a_{7,1}=2$, $a_{7,2}=11$ and $a_{7,3}=2$, indeed:
\begin{align*}
    \text{The partitions with 1 size of part are: } &\Big\{(7), (1,1,1,1,1,1,1)\Big\}.\\
    \text{The partitions with 2 sizes of part are: } &\Big\{(6,1), (5,2), (5,1,1), (4,3), (4,1,1,1), (3,3,1), (3,2,2),\\
    &\hspace{7mm} (3,1,1,1,1), (2,2,2,1), (2,2,1,1,1),(2,1,1,1,1,1)\Big\}.\\
    \text{The partitions with 3 sizes of part are: } &\Big\{(4,2,1), (3,2,1,1)\Big\}.\\
\end{align*}
\par To understand the combinatorial meaning of the other terms, we look at Frobenius partitions and a Wright bijection argument \cite{wright}. First we will recall what Frobenius partitions are (for more details see \cite{Andrews_GFP}). Any integer partition can be represented by a two-rowed array known as a Frobenius partition. This is found by considering the Young diagram of the partition (for example see Figure \ref{fig: fp example}): 
\begin{itemize}
    \item Eliminate the diagonal (say it is of length $s>0$).
    \item The first row of the Frobenius partition is then $(a_1,...,a_s)$ where $a_i$ is equal to the number of boxes to the right of the $i^\text{th}$ diagonal box. 
    \item The second row of the Frobenius partition is then $(b_1,...,b_s)$ where $b_i$ is equal to the number of boxes below the $i^\text{th}$ diagonal box. 
\end{itemize}
From this construction we see that the two rows of the Frobenius partition are strictly decreasing. From the Frobenius partition we can write the number we are partitioning as $n=s+\sum\limits_{i=1}^s(a_i+b_i)$. Using this construction above we have that integer partitions and all 2, equal length, rowed arrays with strictly decreasing rows (Frobenius Partitions) are in 1:1 correspondence. We will denote the set of Frobenius partitions of $n$ by $\text{FP}_0(n)$.
\begin{figure}[H]
    \centering
\begin{tikzpicture}[scale=0.6]
\filldraw[red](0,0)rectangle(0.5,0.5);
\draw[black](0.5,0)--(4,0);
\draw[black](0.5,0.5)--(4,0.5);
\draw[black](4,0)--(4,0.5);
\draw[black](1,0)--(1,0.5);
\draw[black](1.5,0)--(1.5,0.5);
\draw[black](2,0)--(2,0.5);
\draw[black](2.5,0)--(2.5,0.5);
\draw[black](3,0)--(3,0.5);
\draw[black](3.5,0)--(3.5,0.5);

\draw[black](0,-0.5)rectangle(0.5,0);
\filldraw[red](0.5,-0.5)rectangle(1,0);
\draw[black](1,-0.5)--(3.5,-0.5);
\draw[black](3.5,-0.5)--(3.5,0);
\draw[black](1.5,-0.5)--(1.5,0);
\draw[black](2,-0.5)--(2,0);
\draw[black](2.5,-0.5)--(2.5,0);
\draw[black](3,-0.5)--(3,0);

\draw[black](0,-1)rectangle(0.5,-0.5);
\draw[black](0.5,-1)rectangle(1,-0.5);
\filldraw[red](1,-1)rectangle(1.5,-0.5);
\draw[black](1.5,-1)--(3.5,-1);
\draw[black](3.5,-1)--(3.5,-0.5);
\draw[black](1.5,-0.5)--(1.5,-1);
\draw[black](2,-0.5)--(2,-1);
\draw[black](2.5,-0.5)--(2.5,-1);
\draw[black](3,-0.5)--(3,-1);

\draw[black](0,-1)rectangle(0.5,-1.5);
\draw[black](0.5,-1)rectangle(1,-1.5);
\draw[black](1,-1)rectangle(1.5,-1.5);

\draw[black](0,-2)rectangle(0.5,-1.5);

\draw[black](0,-2)rectangle(0.5,-2.5);

\node (a) at (2.5,-4) [label=\small{$\begin{pmatrix}
7&5&4\\
5&2&1
\end{pmatrix}$}]{};
\end{tikzpicture}
    \caption{The integer partition $(8,7,7,3,1,1)$ and its Frobenius partition.}
    \label{fig: fp example}
\end{figure}
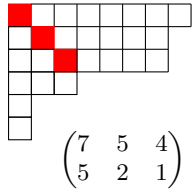
\par As we have seen, in the identity of Theorem \ref{thm: J is 1 identity}, the power of $y$ counts the number of different sizes of part for integer partitions. We will now see what the equivalent quantity is for Frobenius partitions. Consider some integer partition $(\lambda_1,...,\lambda_l)$ and suppose that the equivalent Frobenius partition is of length $s\leq l$, this means that $\lambda_s\geq s$ and $\lambda_{s+1}\leq s$ (if it exists).
\begin{itemize}
    \item Above the diagonal ($i<s$), if $\lambda_i>\lambda_{i+1}$ this corresponds to $a_i>a_{i+1}+1$.
    \item Below the diagonal, a change in the size of part corresponds to $b_i+i-s>b_{i+1}+i+1-s$ for some $i<s$ or equivalently, $b_i>b_{i+1}+1$.
\end{itemize}
We must also consider what happens at the diagonal. We note that if both $a_s\neq 0$ and $b_s\neq 0$ the conditions above under count the number of distinct sizes by 1 (see Figure \ref{fig_FP_distinct_sizes} for examples). Thus, for a given integer partition and its equivalent Frobenius partition we have that,    
\begin{equation}\label{eq: y power FP no offset} 
    1+\sum\limits_{i=1}^{l-1}\1_{\{\lambda_i>\lambda_{i+1}\}}=1+\sum\limits_{i=1}^{s-1}\left(\1_{\{a_i>a_{i+1}+1\}}+\1_{\{b_i>b_{i+1}+1\}}\right)+\1_{\{a_s\neq0\}}\1_{\{b_s\neq 0\}}.
    \end{equation}
We will denote the set of Frobenius partitions of $n$ with $\eqref{eq: y power FP no offset}=k$ by $\text{FP}_{0,k}(n)$. 

\newpage 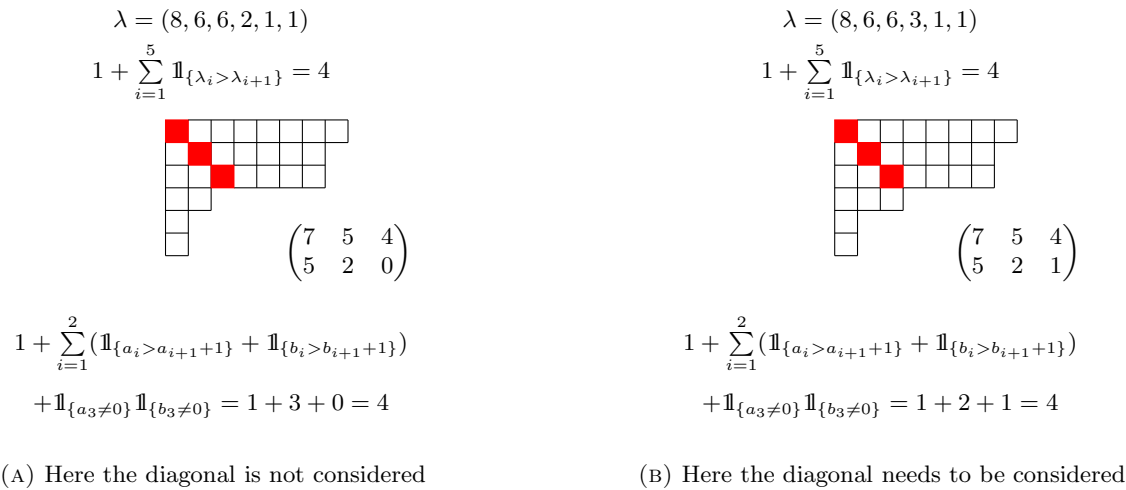
\begin{figure}[H]
    \centering
\begin{subfigure}[b]{0.45\textwidth}
    \centering
\begin{tikzpicture}[scale=0.6]
\filldraw [red] (0,3) rectangle (0.5,2.5);
\draw[black](0.5,3) -- (4,3);
\draw[black](0.5,2.5) -- (4,2.5);
\draw[black] (1,3)--(1,1);
\draw[black] (1.5,3)--(1.5,2);
\draw[black] (2,3)--(2,1.5);
\draw[black] (2.5,3)--(2.5,1.5);
\draw[black] (3,3)--(3,1.5);
\draw[black] (3.5,3)--(3.5,1.5);
\draw[black] (4,3)--(4,2.5);
\filldraw [red] (0.5,2.5) rectangle (1,2);
\draw[black] (0,2.5)--(0,0);
\draw[black] (0,2) --(0.5,2);
\draw[black] (1.5,2) --(3.5,2);
\draw[black] (0,1.5) --(1,1.5);
\draw[black] (1.5,1.5) --(3.5,1.5);
\draw[black] (0.5,2)--(0.5,0);
\filldraw [red] (1,2) rectangle (1.5,1.5);
\draw[black] (0,1)--(1,1);
\draw[black](0,0.5)--(0.5,0.5);
\draw[black](0,0)--(0.5,0);

\node (a) at (4,-1)[label=\small{$\begin{pmatrix}
7&5&4\\
5&2&0
\end{pmatrix}$}]{};  
\node(a) at (1,4.5) [label=\small{$\lambda=(8,6,6,2,1,1)$}]{}; 
\node(a) at (1,3) [label=\small{$1+\sum\limits_{i=1}^5\1_{\{\lambda_i>\lambda_{i+1}\}}=4$}]{}; 
\node(a) at (1,-3) [label=\small{$1+\sum\limits_{i=1}^2(\1_{\{a_i>a_{i+1}+1\}}+\1_{\{b_i>b_{i+1}+1\}})$}]{};
\node(a) at (1,-4) [label=\small{$+\1_{\{a_3\neq 0\}}\1_{\{b_3\neq 0\}}=1+3+0=4$}]{};
\end{tikzpicture}
\caption{Here the diagonal is not considered}
\end{subfigure}
\hfill
\begin{subfigure}[b]{0.45\textwidth}
\centering
\begin{tikzpicture}[scale=0.6]
\filldraw [red] (0,3) rectangle (0.5,2.5);
\draw[black](0.5,3) -- (4,3);
\draw[black](0.5,2.5) -- (4,2.5);
\draw[black] (1,3)--(1,1);
\draw[black] (1.5,3)--(1.5,2);
\draw[black] (2,3)--(2,1.5);
\draw[black] (2.5,3)--(2.5,1.5);
\draw[black] (3,3)--(3,1.5);
\draw[black] (3.5,3)--(3.5,1.5);
\draw[black] (4,3)--(4,2.5);
\filldraw [red] (0.5,2.5) rectangle (1,2);
\draw[black] (0,2.5)--(0,0);
\draw[black] (0,2) --(0.5,2);
\draw[black] (1.5,2) --(3.5,2);
\draw[black] (0,1.5) --(1,1.5);
\draw[black] (1.5,1.5) --(3.5,1.5);
\draw[black] (0.5,2)--(0.5,0);
\filldraw [red] (1,2) rectangle (1.5,1.5);
\draw[black](1.5,1.5)--(1.5,1);
\draw[black] (0,1)--(1.5,1);
\draw[black](0,0.5)--(0.5,0.5);
\draw[black](0,0)--(0.5,0);

\node (a) at (4,-1)[label=\small{$\begin{pmatrix}
7&5&4\\
5&2&1
\end{pmatrix}$}]{};  
\node(a) at (1,4.5) [label=\small{$\lambda=(8,6,6,3,1,1)$}]{}; 
\node(a) at (1,3) [label=\small{$1+\sum\limits_{i=1}^5\1_{\{\lambda_i>\lambda_{i+1}\}}=4$}]{}; 
\node(a) at (1,-3) [label=\small{$1+\sum\limits_{i=1}^2(\1_{\{a_i>a_{i+1}+1\}}+\1_{\{b_i>b_{i+1}+1\}})$}]{};
\node(a) at (1,-4) [label=\small{$+\1_{\{a_3\neq 0\}}\1_{\{b_3\neq 0\}}=1+2+1=4$}]{};
\end{tikzpicture}
\caption{Here the diagonal needs to be considered}
\end{subfigure}
\caption{Examples of the number of distinct sizes in certain integer partitions and the equivalent value for the corresponding Frobenius partitions.}
\label{fig_FP_distinct_sizes}
\end{figure} 

\par So far we have discussed Frobenius partitions where the rows are of equal length, we say these are Frobenius partitions with offset 0. We can consider Frobenius partitions with rows of differing lengths. For example, for some $s_1$ and $s_2$, 
$$\left(\begin{array}{cccc}a_1 & a_2 & ... & a_{s_1}\\ b_1 & b_2 & ... & b_{s_2}\end{array}\right)$$
such that $a_i>a_{i+1}\geq 0$ and $b_j>b_{j+1}\geq 0$, is a Frobenius partition with offset $m=s_1-s_2$. As in \cite{MDJ} we will denote the (first) $|s_1-s_2|$ ``empty" entries in the shorter row by dashes. We will denote the set of Frobenius partitions $n$ which have offset $m$ by $\text{FP}_m(n)$.
\par Frobenius partitions of some offset $m$ can be constructed from ordinary Frobenius partitions (offset 0); in particular for each $m$ there is a bijection, often attributed to 
 Wright, $\phi_m:\text{FP}_{0}(n) \rightarrow \text{FP}_{m}\left(n+\frac{m(m+1)}{2}\right)$. Given an element of $\text{FP}_{0}(n)$ we have a corresponding ordinary partition of $n$. Adjoin a right angled triangle of size $\frac{|m|(|m|+1)}{2}$ to either the left or top edge of its Young Diagram, depending on whether $m\geq 0$ or $m<0$ respectively (see Figure \ref{fig: wright biject example}). Now use the new leading diagonal implied by the triangle to read off an element of $\text{FP}_{m}\left(n+\frac{m(m+1)}{2}\right)$ by letting $s_1$ be the length of the diagonal if $m\geq 0$ ($s_2$ if $m<0$), the $a_i$ are the sizes of rows to the right of the diagonal and the $b_i$ are the sizes of columns under the diagonal (the $|m|$ empty rows/columns coming from the triangle supply the required $|m|$ ``empty" entries in the corresponding Frobenius partition of offset $m$).
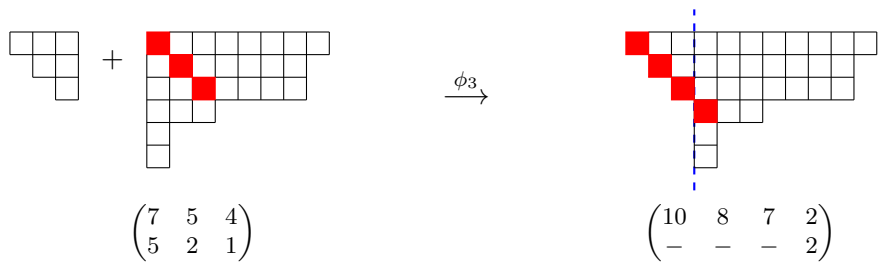
\begin{figure}[H]
    \centering
\begin{subfigure}[b]{\textwidth}
    \centering
\begin{tikzpicture}[scale=0.6]
\draw[black](-3,3) -- (-1.5,3);
\draw[black] (-3,3) -- (-3,2.5);
\draw[black] (-2.5,3) -- (-2.5,2);
\draw[black] (-2,3) -- (-2,1.5);
\draw[black] (-1.5,3) -- (-1.5,1.5);
\draw[black](-3,2.5) -- (-1.5,2.5);
\draw[black](-2.5,2) -- (-1.5,2);
\draw[black](-2,1.5) -- (-1.5,1.5);
\node (a) at (-0.75,1.75)[label=\large{+}]{};
\filldraw [red] (0,3) rectangle (0.5,2.5);
\draw[black](0.5,3) -- (4,3);
\draw[black](0.5,2.5) -- (4,2.5);
\draw[black] (1,3)--(1,1);
\draw[black] (1.5,3)--(1.5,2);
\draw[black] (2,3)--(2,1.5);
\draw[black] (2.5,3)--(2.5,1.5);
\draw[black] (3,3)--(3,1.5);
\draw[black] (3.5,3)--(3.5,1.5);
\draw[black] (4,3)--(4,2.5);
\filldraw [red] (0.5,2.5) rectangle (1,2);
\draw[black] (0,2.5)--(0,0);
\draw[black] (0,2) --(0.5,2);
\draw[black] (1.5,2) --(3.5,2);
\draw[black] (0,1.5) --(1,1.5);
\draw[black] (1.5,1.5) --(3.5,1.5);
\draw[black] (0.5,2)--(0.5,0);
\filldraw [red] (1,2) rectangle (1.5,1.5);
\draw[black](1.5,1.5)--(1.5,1);
\draw[black] (0,1)--(1.5,1);
\draw[black](0,0.5)--(0.5,0.5);
\draw[black](0,0)--(0.5,0);

\node (a) at (1,-2.5)[label=\small{$\begin{pmatrix}
7&5&4\\
5&2&1
\end{pmatrix}$}]{};
\node (a) at (7,1)[label=\large{$\overset{\phi_{3}}{\longrightarrow}$}]{};
\draw [dashed, thick, blue] (12,-0.5) -- (12,3.5);
\filldraw [red] (10.5,3) rectangle (11,2.5);
\draw[black](11,3)--(16,3);
\draw[black](11,2.5)--(16,2.5);
\filldraw [red] (11,2.5) rectangle (11.5,2);
\draw[black](11.5,2)--(15.5,2);
\draw[black](12,1.5)--(15.5,1.5);
\draw[black](12.5,1)--(13.5,1);
\draw[black](12,0.5)--(12.5,0.5);
\draw[black](12,0)--(12.5,0);
\draw[black](11.5,3)--(11.5,2.5);
\draw[black](12,3)--(12,0);
\draw[black](12.5,3)--(12.5,0);
\filldraw [red] (11.5,2) rectangle (12,1.5);
\filldraw [red] (12,1.5) rectangle (12.5,1);
\draw[black](13,3)--(13,1);
\draw[black](13.5,3)--(13.5,1);
\draw[black](14,3)--(14,1.5);
\draw[black](14.5,3)--(14.5,1.5);
\draw[black](15,3)--(15,1.5);
\draw[black](15.5,3)--(15.5,1.5);
\draw[black](16,3)--(16,2.5);

\node (a) at (13,-2.5)[label=\small{$\begin{pmatrix}
10&8&7&2\\
-&-&-&2
\end{pmatrix}$}]{};
\end{tikzpicture}
    \caption{An element of $\text{FP}_{0,4}(27)$ and its image in $\text{FP}_{3,4}(33)$ }
\end{subfigure}
 \caption{Examples of the Wright bijections $\phi_3$ and $\phi_{-3}$ (adapted from \cite{MDJ} Figure 5).}
    \label{fig: wright biject example}
\end{figure}

\newpage\begin{figure}\ContinuedFloat
\begin{subfigure}[b]{\textwidth}
    \centering
    \vspace{10mm}
    \begin{tikzpicture}[scale=0.6]

\draw[black](0,6) --(0.5,6); 
\draw[black](0,5.5)--(1,5.5);
\draw[black](0,5)--(1.5,5);
\draw[black](0,4.5) --(1.5,4.5);
\draw[black] (0,6)--(0,4.5);
\draw[black] (0.5,6)--(0.5,4.5);
\draw[black] (1,5.5)--(1,4.5);
\draw[black] (1.5,5)--(1.5,4.5);
\node (a) at (0.75,3)[label=\large{+}]{};
\filldraw [red] (0,3) rectangle (0.5,2.5);
\draw[black](0.5,3) -- (4,3);
\draw[black](0.5,2.5) -- (4,2.5);
\draw[black] (1,3)--(1,1);
\draw[black] (1.5,3)--(1.5,2);
\draw[black] (2,3)--(2,1.5);
\draw[black] (2.5,3)--(2.5,1.5);
\draw[black] (3,3)--(3,1.5);
\draw[black] (3.5,3)--(3.5,1.5);
\draw[black] (4,3)--(4,2.5);
\filldraw [red] (0.5,2.5) rectangle (1,2);
\draw[black] (0,2.5)--(0,0);
\draw[black] (0,2) --(0.5,2);
\draw[black] (1.5,2) --(3.5,2);
\draw[black] (0,1.5) --(1,1.5);
\draw[black] (1.5,1.5) --(3.5,1.5);
\draw[black] (0.5,2)--(0.5,0);
\filldraw [red] (1,2) rectangle (1.5,1.5);
\draw[black](1.5,1.5)--(1.5,1);
\draw[black] (0,1)--(1.5,1);
\draw[black](0,0.5)--(0.5,0.5);
\draw[black](0,0)--(0.5,0);
\node (a) at (1,-2.5)[label=\small{$\begin{pmatrix}
7&5&4\\
5&2&1
\end{pmatrix}$}]{};
\node (a) at (7,1)[label=\large{$\overset{\phi_{-3}}{\longrightarrow}$}]{};
\draw [dashed, thick, blue] (9.5,3) -- (14.5,3);
\draw[red](10,4.5) rectangle (10.5,4);
\draw[red](10.5,4)rectangle(11,3.5);
\draw[black](10,4) rectangle (10.5,3.5);
\draw[red](11,3.5) rectangle (11.5,3);
\draw[black](10,3.5)rectangle (10.5,3);
\draw[black](10.5,3.5) rectangle (11,3);
\draw[black] (10,3)--(14,3);
\draw[black] (10,2.5)--(14,2.5);
\draw[black] (10,2)--(13.5,2);
\draw[black] (10,1.5)--(13.5,1.5);
\draw[black] (10,1)--(11.5,1);
\draw[black] (10,0.5)--(10.5,0.5);
\draw[black] (10,0)--(10.5,0);
\draw[black] (10,3) --(10,0);
\draw[black] (10.5,3) --(10.5,0);
\draw[black] (11,3) --(11,1);
\draw[black] (11.5,3) --(11.5,1);
\draw[black] (12,3) --(12,1.5);
\draw[black] (12.5,3) --(12.5,1.5);
\draw[black] (13,3) --(13,1.5);
\draw[black] (13.5,3) --(13.5,1.5);
\draw[black] (14,3) --(14,2.5);
\filldraw[red](11.5,3)rectangle(12,2.5);
\filldraw[red](12,2.5)rectangle(12.5,2);
\filldraw[red](12.5,2)rectangle(13,1.5);
\node (a) at (12,-2.5)[label=\small{$\begin{pmatrix}
-&-&-&4&2&1\\
8&5&4&2&1&0
\end{pmatrix}$}]{};
\end{tikzpicture}
    \caption{An element of $\text{FP}_{0,4}(27)$ and its image in $\text{FP}_{-3,4}(30)$ }
\end{subfigure}
    \caption{Examples of the Wright bijections $\phi_3$ and $\phi_{-3}$ (adapted from \cite{MDJ} Figure 5).}
\end{figure}
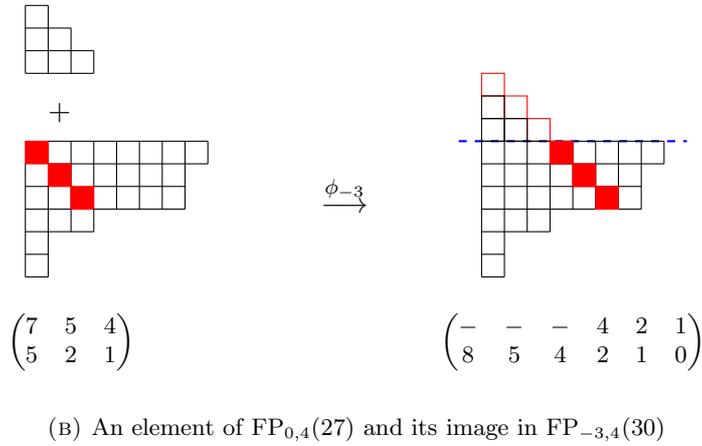
\par These Wright bijections explain the other terms in, 
\begin{equation}
\sum\limits_{m\in\mathbb{Z}}Q^{\frac{m(m+1)}{2}}z^m\prod\limits_{i=1}^\infty\frac{1+(y-1)Q^i}{1-Q^i}.
\end{equation}
We can see that in the sum above, $y$ is only in the product we have already explained. This means that the Wright bijection should not change the value of \eqref{eq: y power FP no offset} that we start with. Thus, for any Frobenius partition (no matter the offset) the quantity that the power of $y$ counts is,
\begin{equation}\label{eq: y FP}
1+\sum\limits_{i=1}^{s_1-1}\1_{\{a_i>a_{i+1}\}}+\sum\limits_{i=1}^{s_2-1}\1_{\{b_i>b_i+1\}}+\1_{\{a_{s_1}\neq 0\}}\1_{\{b_{s_2}\neq 0\}}. 
\end{equation}
Note that when $s_1=s_2$ this is the same as \eqref{eq: y power FP no offset}. We denote the set of Frobenius partitions of $n$ with offset $m$ and $\eqref{eq: y FP}=k$ by $\text{FP}_{m,k}(n)$. As desired, for any $m$, the bijection $\phi_m$ preserves the value of $k$, i.e. $\phi_m:\text{FP}_{0,k}(n) \rightarrow \text{FP}_{m,k}\left(n+\frac{m(m+1)}{2}\right)$.
\par Putting all this together we have that, 
$$Z(Q,z,y)=\sum\limits_{m\in\mathbb{Z}}Q^{\frac{m(m+1)}{2}}z^m\prod\limits_{i=1}^\infty\frac{1+(y-1)Q^i}{1-Q^i}=\sum\limits_{\substack{n\geq 0,\\ m\in\mathbb{Z},\\k>0}}a_{n,m,k}Q^nz^my^k $$
is the 3 variable generating function for Frobenius partitions, where $a_{n,m,k}=|\text{FP}_{m,k}(n)|$.
\begin{rem}
We note here that if we take $y=2$ we have that, 
$$\prod\limits_{i=1}^\infty\frac{1+(y-1)Q^i}{1-Q^i}=\prod\limits_{i=1}^\infty \frac{1+Q^i}{1-Q^i},$$
which can be seen as the generating function for overpartitions (for example see works by Corteel, Lovejoy and Yee \cite{Lovejoy_Corteel_OPs}, \cite{Lovejoy_Corteel_Yee_OPs_GFPs}, \cite{Lovejoy_OPs}). An overpartition of an integer $n$ is an integer partition of $n$ where the first instance of each part size can be overlined or not. For example there are 14 overpartitions of 4,
$$\bigg\{(4),(\overline{4}), (3,1), (\overline{3},1), (3,\overline{1}), (\overline{3},\overline{1}), (2,2), (\overline{2},2), (2,1,1), (\overline{2},1,1),(2,\overline{1},1) ,(\overline{2},\overline{1},1), (1,1,1,1), (\overline{1},1,1,1)\bigg\}.$$
Recall that, 
$$\prod\limits_{i=1}^\infty \frac{1+(y-1)Q^i}{1-Q^i}=\sum\limits_{n\geq 0, k\geq 1} a_{n,k}Q^ny^k$$
is the two variable generating function of integer partitions where the power of $y$ counts the number of sizes of parts in a given partition. Thus it is clear that if we take $y=2$ this is the same as saying for each size of part there are two possible ways of writing the parts i.e. the first instance of that part size can be overlined or not. If we let $\overline{p}(n)$ denote the number of overlined partitions of $n$ we see that $\sum\limits_{k=1}^\infty 2^ka_{n,k}=\overline{p}(n)$, e.g.\ $a_{4,1}=3$, $\left\{(4),(2,2),(1,1,1,1)\right\}$, $a_{4,2}=2$, $\{(3,1),(2,1,1)\}$ and $a_{4,k}=0$ for any $k\geq3$ and we have that, $\sum\limits_{k=1}^\infty 2^ka_{4,k}=14=\overline{p}(4)$.
\par By similar reasoning we can see that by taking $y$ to be any positive integer, say $m$, we have that
$$\prod\limits_{i=1}^\infty\frac{1+(m-1)Q^i}{1-Q^i}=\sum\limits_{n\geq 0, k\geq 1}m^ka_{n,k}Q^n=\sum\limits_{n\geq0}\overline{p}^{(m-1)}(n)Q^n$$
where we say that $\overline{p}^{(m-1)}(n)$ is the number of ``$(m-1)$-overpartitions of $n$", i.e.\ the first part of each size can be written in one of $m$ ways, $\{a,\overline{a},\overline{a}^{(2)},...,\overline{a}^{(m-1)}\}$. For example with $m=3$, $\sum\limits_{k=1}^\infty 3^ka_{4,k}=27=\overline{p}^{(2)}(4)$ which we confirm by looking at the $2$-overpartitions of $4$,
\begin{align*}
    \bigg\{(4),&(\overline{4}), (\overline{\overline{4}}),
    (3,1), (\overline{3},1), (\overline{\overline{3}},1),
    (3,\overline{1}), (3,\overline{\overline{1}}),
    (\overline{3},\overline{1}), (\overline{3},\overline{\overline{1}}),
    (\overline{\overline{3}},\overline{1}),(\overline{\overline{3}},\overline{\overline{1}}),(2,2), (\overline{2},2), (\overline{\overline{2}},2),
    (2,1,1),\\
    &(\overline{2},1,1),(\overline{\overline{2}},1,1), (2,\overline{1},1) , (2,\overline{\overline{1}}),
    (\overline{2},\overline{1},1), (\overline{2},\overline{\overline{1}},1),(\overline{\overline{2}},\overline{1},1),(\overline{\overline{2}},\overline{\overline{1}},1),
    (1,1,1,1), (\overline{1},1,1,1), (\overline{\overline{1}},1,1,1)\bigg\}.
\end{align*}
These generating functions are given on OEIS A321884 \cite{OEIS}, in terms of partitions into coloured blocks of equal parts from $m$ colours.
\end{rem}

\par \vspace{3mm} We have seen above that, by using the identity in Theorem \ref{thm: J is 1 identity}, $Z(Q,z,y)$ can be interpreted as the generating function for certain Frobenius partitions. This is not immediately clear from the form, 
\begin{align}
        &Z(Q,z,y) =  1 + \sum_{\substack{L,R\geq0\\ L+R>0}}y^{(L+R)} \sum_{\substack{\ell_1,\dots,\ell_L\geq 1 \\ m_1,\dots,m_{R}\geq 1}}\prod_{j=1}^{L} \frac{Q^{\tfrac12 \ell_j(\ell_j-1)+j\ell_j+\ell_{j-1}(\ell_j+\cdots+\ell_{L})}z^{-l_j}}{1-Q^{\ell_j+\cdots + \ell_{L}}}\notag 
    \\ &\qquad\qquad\qquad\qquad\prod_{j=1}^{R}\frac{Q^{\tfrac12 m_j(m_j-1)+jm_j+m_{j-1}(m_{j}+\cdots+m_{R})}z^{m_j}}{1-Q^{m_{j}+\cdots+m_{R}}}\bigg(\big(1-y^{-1}\big)Q^{\ell_1+\cdots + \ell_L}Q^{m_{1}+\cdots+m_{R}} + y^{-1}\bigg). \label{eq: Z(q,z,y)}
\end{align}
However we can see that this sum does look rather combinatorial. It will be useful to pull the constant $1$ into the sum, we see that it is simply the value of the summand when $L=R=0$.
\par Let us start by considering, 
$$y^L\sum\limits_{\ell_1,\cdots,\ell_L\geq 1}\prod\limits_{j=1}^L\frac{Q^{\frac{\ell_j(\ell_j-1)}{2}+j\ell_j+\ell_{j-1}(\ell_j+\cdots+\ell_L)}z^{-\ell_j}}{1-Q^{\ell_j+\cdots+\ell_L}}$$
for some fixed $L>0$. By writing $\frac{1}{1-Q^{\ell_j+\cdots+\ell_L}}$ as a geometric series we have, 
\begin{equation}
    y^L\sum\limits_{\ell_1,\cdots,\ell_L\geq 1}\prod\limits_{j=1}^L\frac{Q^{\frac{\ell_j(\ell_j-1)}{2}+j\ell_j+\ell_{j-1}(\ell_j+\cdots+\ell_L)}z^{-\ell_j}}{1-Q^{\ell_j+\cdots+\ell_L}}=y^L\sum\limits_{\substack{\ell_1,\cdots,\ell_L\geq 1\\ \alpha_1,\cdots,\alpha_L\geq0}}\prod\limits_{j=1}^L Q^{\frac{\ell_j(\ell_j-1)}{2}+(\alpha_j+1+\ell_{j-1})(\ell_j+\cdots+\ell_L)}z^{-\ell_j},
\end{equation}
with $l_0=-1$.
If we expand the product and group terms in the correct way, we see that the power of $Q$ forms an integer partition with:
\begin{itemize}
    \item $\alpha_1$ parts of size $\ell_1+\cdots+\ell_L$,
    \item distinct parts of sizes $(\ell_1-1+\ell_2+\cdots+\ell_L)$, $(\ell_1-2+\ell_2+\cdots+\ell_L)$, ..., $(\ell_2+\cdots+\ell_L+1)$, 
    \item $\alpha_2+2$ parts of size $\ell_2+\cdots+\ell_L$,
    \item distinct parts of sizes $(\ell_2-1+\ell_3+\cdots+\ell_L)$, $(\ell_2-2+\ell_3+\cdots+\ell_L)$, ..., $(\ell_3+\cdots+\ell_L+1)$,
    \\ \vdots
    \item $\alpha_L+2$ parts of size $\ell_L$,
    \item distinct parts of sizes $\ell_L-1$, $\ell_L-2$, ..., $1$.    
\end{itemize}
Where $\alpha_1,\cdots,\alpha_L\geq 0$, so there could possibly be no parts of size $\ell_1+\cdots+\ell_L$ etc. See Figure \ref{fig: J is 1 Z partition} for an example when $L=3$. The power of $z^{-1}$ is $\ell_1+\cdots+\ell_L$. If $\alpha_1>0$ this is the size of the largest part in the partition or, equivalently, the number of parts in the conjugate partition. When $\alpha_1=0$ this is one more than the largest part (or number of parts in the conjugate partition). The power of $y$ is $L$, which we can see is the number of runs of parts that differ by exactly 1, in other words the number of ``triangles" we see in the Young diagram (see Figure \ref{fig: J is 1 Z partition} for example). Equivalently, this is the number of sizes of parts that are repeated in the above partition (or one more than that). If $\alpha_1\geq 2$ then there are exactly $L$ sizes for which there are at least 2 parts, these are $\ell_1+\cdots+\ell_L$, $\ell_2+\cdots+\ell_L$, ... and, $\ell_L$. If $\alpha_1<2$ then there are $L-1$ part sizes that are repeated.  
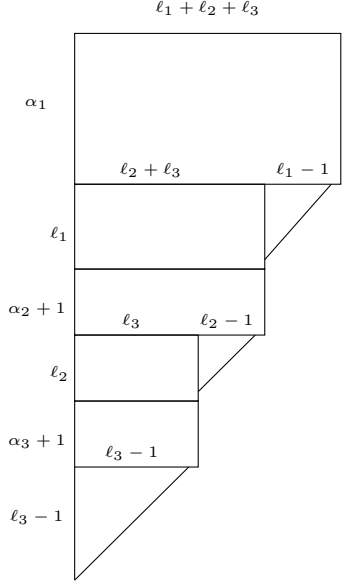
\begin{figure}[H]
    \centering
    \begin{tikzpicture}[scale=0.5]
    \draw (0,0) rectangle (7,-4); 
    \draw (0,-4) rectangle (5,-6.25);
    \draw(6.75,-4) -- (5,-6);
    \draw (0,-6.25) rectangle (5,-8);
    \draw (0,-8) rectangle (3.25, -9.75);
    \draw (4.75,-8) -- (3.25,-9.5);
    \draw (0,-9.75) rectangle (3.25, -11.5);
    \draw (0,-11.5) -- (0,-14.5);
    \draw (0,-14.5) -- (3,-11.5);
    \node (a) at (3.5,0) [label=\tiny{$\ell_1+\ell_2+\ell_3$}]{};
    \node(a) at (-1,-2.5) [label=\tiny{$\alpha_1$}]{};
    \node (a) at (2,-4.3) [label=\tiny{$\ell_2+\ell_3$}]{};
    \node (a) at (6,-4.3) [label=\tiny{$\ell_1-1$}]{};
    \node(a) at (-0.4,-6) [label=\tiny{$\ell_1$}]{};
    \node(a) at (-1,-8) [label=\tiny{$\alpha_2+1$}]{};
    \node (a) at (1.5,-8.3) [label=\tiny{$\ell_3$}]{};
    \node (a) at (4,-8.3) [label=\tiny{$\ell_2-1$}]{};
    \node(a) at (-0.4,-9.7) [label=\tiny{$\ell_2$}]{};
    \node(a) at (-1,-11.5) [label=\tiny{$\alpha_3+1$}]{};
    \node (a) at (1.5,-11.8) [label=\tiny{$\ell_3-1$}]{};
    \node(a) at (-1,-13.5) [label=\tiny{$\ell_3-1$}]{};
    \end{tikzpicture}
    \caption{The partition $\sum\limits_{j=1}^L\left\{\frac{\ell_j(\ell_j-1)}{2}+(\alpha_j+1+\ell_{j-1})(\ell_j+\cdots+\ell_L)\right\}$, for $L=3$.}
    \label{fig: J is 1 Z partition}
\end{figure}
We also have the term, 
$$y^L\sum\limits_{\ell_1,\cdots,\ell_L\geq 1}\prod\limits_{j=1}^L\frac{Q^{\frac{\ell_j(\ell_j-1)}{2}+j\ell_j+\ell_{j-1}(\ell_j+\cdots+\ell_L)}z^{-\ell_j}}{1-Q^{\ell_j+\cdots+\ell_L}}\cdot Q^{\ell_1+\cdots+\ell_L}.$$
By multiplying through by $Q^{\ell_1+\cdots+\ell_L}$, we see that the partitions described above are now guaranteed to have at least one part of size $\ell_1+\cdots+\ell_L$ and so the power of $z^{-1}$ is exactly the number of parts in the conjugate partition. The power of $y$ still gives the number of runs of parts that differ by 1 in size (i.e.\ the number of triangles like in Figure \ref{fig: J is 1 Z partition}).
\par By similar reasoning we have, 

\small $$  y^R\sum\limits_{m_1,\cdots,m_R\geq 1}\prod\limits_{j=1}^R\frac{Q^{\frac{m_j(m_j-1)}{2}+jm_j+m_{j-1}(m_j+\cdots+m_R)}z^{m_j}}{1-Q^{m_j+\cdots+m_R}}=y^R\sum\limits_{\substack{m_1,\cdots,m_R\geq 1\\ \beta_1,\cdots,\beta_R\geq 0}}\prod\limits_{j=1}^RQ^{\frac{m_j(m_j-1)}{2}+(\beta_j+1+m_{j-1})(m_j+\cdots+m_R)}z^{m_j},$$
\normalsize with $m_0=0$. Again we see that the power of $Q$ forms an integer partition with:
\begin{itemize}
    \item $\beta_1+1$ parts of size $m_1+\cdots+m_R$,
    \item distinct parts of sizes $(m_1-1+m_2+\cdots+m_R)$, $(m_1-2+m_2+\cdots+m_R)$, ..., $(m_2+\cdots+m_R+1)$, 
    \item $\beta_2+2$ parts of size $m_2+\cdots+m_R$,
    \item distinct parts of sizes $(m_2-1+m_3+\cdots+m_R)$, $(m_2-2+m_3+\cdots+m_R)$, ..., $(m_3+\cdots+m_R+1)$,
    \\ \vdots
    \item $\beta_R+2$ parts of size $m_R$,
    \item distinct parts of sizes $m_R-1$, $m_R-2$, ..., $1$.    
\end{itemize}
The power of $z$ is $m_1+\cdots+m_R$, which is the size of the largest part or equivalently the number of parts in the conjugate partition. The power of $y$ is R which is the number of runs of parts that differ by 1 in size. Or equivalently we can see this as the number of sizes that are used more than once in the partition if $\beta_1>0$, and one more than this if $\beta_1=0$.
\par This time when we multiply through by $Q^{m_1+\cdots+m_R}$ we have that the partition must have at least $2$ parts of size $m_1+\cdots+m_R$. Thus, the power of $y$ is now exactly the number of sizes that are used more than once.
\par We now use a ``General Principle" due to Andrews \cite{Andrews_GFP}, to see that the products 
\begin{equation}\label{eq: J=1 Z part 1}
\sum_{L,R\geq0}y^{(L+R-1)}\sum_{\substack{\ell_1,\dots,\ell_L\geq 1 \\ m_1,\dots,m_{R}\geq 1}}\prod_{j=1}^{L} \frac{Q^{\tfrac12 \ell_j(\ell_j-1)+j\ell_j+\ell_{j-1}(\ell_j+\cdots+\ell_{L})}z^{-l_j}}{1-Q^{\ell_j+\cdots + \ell_{L}}}
\prod_{j=1}^{R}\frac{Q^{\tfrac12 m_j(m_j-1)+jm_j+m_{j-1}(m_{j}+\cdots+m_{R})}z^{m_j}}{1-Q^{m_{j}+\cdots+m_{R}}}
\end{equation}
and, 
\begin{align}
\sum_{L,R\geq0}y^{(L+R)}\sum_{\substack{\ell_1,\dots,\ell_L\geq 1 \\ m_1,\dots,m_{R}\geq 1}}&\prod_{j=1}^{L} \frac{Q^{\tfrac12 \ell_j(\ell_j-1)+j\ell_j+\ell_{j-1}(\ell_j+\cdots+\ell_{L})}z^{-l_j}}{1-Q^{\ell_j+\cdots + \ell_{L}}}\notag
\\&\cdot\prod_{j=1}^{R}\frac{Q^{\tfrac12 m_j(m_j-1)+jm_j+m_{j-1}(m_{j}+\cdots+m_{R})}z^{m_j}}{1-Q^{m_{j}+\cdots+m_{R}}} \cdot (1-y^{-1})Q^{\ell_1+\cdots+\ell_L}Q^{m_1+\cdots+m_R} \label{eq: J=1 Z part 2}\end{align}
($\ell_0=-1$ and $m_0=0$) are generating functions for certain (non-trivial) generalised Frobenius partitions. As we have seen, the rows of a Frobenius partition are strictly decreasing. By relaxing this condition, or considering other conditions, we get generalised Frobenius partitions (GFPs). The ``General Principle" gives a way of writing the generating function for generalised Frobenius partitions with certain conditions. Suppose that $f_{A}(Q,z) = \sum_{n,k}a_{n,k}Q^nz^k$ and $f_{B}(Q,z) = \sum_{n,k}b_{n,k}Q^nz^k$ are generating functions for ordinary partitions of $n$ with $k$ parts and satisfying some conditions $A$ and $B$, respectively. Then the ``General Principle" states that the formal series $f_{A}(Q,Qz)f_{B}(Q,z^{-1}) = \sum_{k} f_{A,B,k}(Q)z^k$ is the generating function for generalised Frobenius partitions with first row satisfying condition $A$ and second row satisfying $B$, with the power of $z$ giving the offset.
\par We have a third variable $y$ which is not considered in the ``General Principle", but we see that in both of the above the $y$ factor can be pulled out. Let us first consider \eqref{eq: J=1 Z part 1}, in particular for given $L$ and $R$,
\small \begin{align*}
&\sum_{\substack{\ell_1,\dots,\ell_L\geq 1 \\ m_1,\dots,m_{R}\geq 1}}\prod_{j=1}^{L} \frac{Q^{\tfrac12 \ell_j(\ell_j-1)+j\ell_j+\ell_{j-1}(\ell_j+\cdots+\ell_{L})}z^{-l_j}}{1-Q^{\ell_j+\cdots + \ell_{L}}}
\prod_{j=1}^{R}\frac{Q^{\tfrac12 m_j(m_j-1)+jm_j+m_{j-1}(m_{j}+\cdots+m_{R})}z^{m_j}}{1-Q^{m_{j}+\cdots+m_{R}}}\\
&= \left\{\sum\limits_{m_1,\cdots,m_R\geq 1}\prod\limits_{j=1}^R\frac{Q^{\tfrac12 m_j(m_j-1)+jm_j+\hat{m}_{j-1}(m_{j}+\cdots+m_{R})}(Qz)^{m_j}}{1-Q^{m_{j}+\cdots+m_{R}}}\right\}\cdot\left\{\sum\limits_{\ell_1,\cdots,\ell_L\geq 1}\prod_{j=1}^{L} \frac{Q^{\tfrac12 \ell_j(\ell_j-1)+j\ell_j+\ell_{j-1}(\ell_j+\cdots+\ell_{L})}z^{-l_j}}{1-Q^{\ell_j+\cdots + \ell_{L}}}\right\}
\end{align*}
\normalsize where $\hat{m}_0=-1$ and $\hat{m}_j=m_j$ ($j\in\{1,\cdots,L\}$). For \eqref{eq: J=1 Z part 2}, for given $L$ and $R$ we have, 
\small\begin{align*}
    &\sum_{\substack{\ell_1,\dots,\ell_L\geq 1 \\ m_1,\dots,m_{R}\geq 1}}\prod_{j=1}^{L} \frac{Q^{\tfrac12 \ell_j(\ell_j-1)+j\ell_j+\ell_{j-1}(\ell_j+\cdots+\ell_{L})}z^{-l_j}}{1-Q^{\ell_j+\cdots + \ell_{L}}}
\prod_{j=1}^{R}\frac{Q^{\tfrac12 m_j(m_j-1)+jm_j+m_{j-1}(m_{j}+\cdots+m_{R})}z^{m_j}}{1-Q^{m_{j}+\cdots+m_{R}}}\cdot Q^{\ell_1+\cdots+\ell_L}Q^{m_1+\cdots+m_R}\\
&=\left\{\sum\limits_{m_1,\cdots,m_R\geq 1}\prod\limits_{j=1}^R\frac{Q^{\tfrac12 m_j(m_j-1)+jm_j+m_{j-1}(m_{j}+\cdots+m_{R})}(Qz)^{m_j}}{1-Q^{m_{j}+\cdots+m_{R}}}\right\}\cdot\left\{\sum\limits_{\ell_1,\cdots,\ell_L\geq 1}\prod_{j=1}^{L} \frac{Q^{\tfrac12 \ell_j(\ell_j-1)+j\ell_j+\hat{\ell}_{j-1}(\ell_j+\cdots+\ell_{L})}z^{-l_j}}{1-Q^{\ell_j+\cdots + \ell_{L}}}\right\}
\end{align*}
\normalsize for $\hat{\ell}_0=0$ and $\hat{\ell}_j=\ell_j$ ($j\in\{1,\cdots,R\}$).
\par So $Z(Q,z,y)=\eqref{eq: J=1 Z part 1}+\eqref{eq: J=1 Z part 2}$ is a linear combination of three 3-variable generating functions for certain generalised Frobenius partitions. 
\eqref{eq: J=1 Z part 2} is the difference of two generating functions. Both give generalised Frobenius partitions where the rows are the conjugate partitions of partitions as given in Figure \ref{fig: J is 1 Z partition} but with at least one part of size $\ell_1+\cdots+\ell_L$ (or equivalently $m_1+\cdots+m_R$) rather than potentially none. Again in both generating functions, the power of $z$ gives the offset of these GFPs. In these two generating functions the powers of $y$ are $(L+R)$ and $(L+R-1)$, where $L+R$ is the number of runs of parts that differ by 1 (each row treated separately). \eqref{eq: J=1 Z part 1} is similar to the second generating function in \eqref{eq: J=1 Z part 2} with $y^{L+R-1}$ except the power of $z$ counts the offset or the offset $\pm1$, depending on the values of $\alpha_1$ and $\beta_1$. The four cases are:
\begin{itemize}
    \item $\beta_1>0$ and $\alpha_1>0$ then the power of $z$ is:
    $$\{\text{number of parts in first row}\}-\{\text{number of parts in the second row}\}=\text{offset},$$
    \item $\beta_1>0$ and $\alpha_1=0$ then the power of $z$ is:
    $$\{\text{number of parts in first row}\}-\{\text{number of parts in the second row}+1\}=\text{offset}-1,$$
    \item $\beta_1=0$ and $\alpha_1>0$ then the power of $z$ is:
    $$\{\text{number of parts in first row}+1\}-\{\text{number of parts in the second row}\}=\text{offset}+1, $$
    \item $\beta_1=0$ and $\alpha_1=0$ then the power of $z$ is:
    $$\{\text{number of parts in first row}+1\}-\{\text{number of parts in the second row}+1\}=\text{offset}.$$
\end{itemize}

\par \vspace{3mm} So the identity we have proved in Theorem \ref{thm: J is 1 identity} can be seen as a rather surprising equivalence of generating functions for certain (generalised) Frobenius partitions. One side of the rearranged identity \eqref{eq: J=1 iden rearranged}, 
$$\sum\limits_{m\in\mathbb{Z}}Q^{\frac{m(m+1)}{2}}z^m\prod\limits_{i=1}^\infty\frac{1+(y-1)Q^i}{1-Q^i}=\sum\limits_{\substack{n\geq 0,\\ m\in\mathbb{Z},\\k>0}}a_{n,m,k}Q^nz^my^k $$
is the 3 variable generating function for Frobenius partitions where $a_{n,m,k}=|\text{FP}_{m,k}(n)|$ is the number of offset $m$ Frobenius partitions of $n$ with quantity \eqref{eq: y FP}$=k$. The other side, $Z(Q,z,y)$, is a linear combination of generating functions for generalised Frobenius partitions with rows forming partitions like in Figure \ref{fig: J is 1 Z partition}. With the offset being given by the power of $z$ and the power of $y$ telling us the number runs of parts which differ in size by exactly 1 (each row treated separately).

\subsection{Combinatorial Identity for $J(i)=i$}\label{sec: comb j is i}~
\par Now we consider the identity when $J(i)=i$, \eqref{eq: J=i iden}. Again we let $Q\defeq q^2$, $z\defeq q^{-2c}$ and $y\defeq e^{-2\beta}$.
\iiden*
\begin{rem}
    Again these identities (for each $n$) hold as formal identities. The probabilistic nature of the proof for Theorem \ref{thm: J is i identity} means they hold for $Q\in(0,1)$ and $y\in(0,1]$, but the identities should hold for any $Q,y\in\mathbb{R}$ with $y\neq 0$.
\end{rem}

We will now give combinatorial interpretations to this identity. Let us first consider the RHS, 
$$\sum\limits_{z\in\Omega}\prod\limits_{i=1}^\infty y^{\1_{\{z_{-i}>0\}}(n+i-\sum\limits_{j=i}^\infty z_{-j})}Q^{iz_{-i}}.$$
We can see that the states in $\Omega$ can be though of as integer partitions, this is clear by looking at the power of $Q$ in the above, $\sum\limits_{i=1}^\infty iz_{-i}$. The occupation at site $-i$, $z_{-i}$, gives the number of parts of size $i$ in the partition. It is also clear to see that these states and integer partitions are in 1:1 correspondence.
\par Now we focus on what the power of $y$ counts for a given integer partition (state in $\Omega$), 
$$\sum\limits_{i=1}^\infty \1_{\{z_{-i}>0\}}\bigg((n+i)-\sum\limits_{j=1}^i z_{-j}\bigg)=n\sum\limits_{i=1}^\infty \1_{\{z_{-i}>0\}}+\sum\limits_{i=1}^\infty i\1_{\{z_{-i}>0\}}-\sum\limits_{i=1}^\infty\1_{\{z_{-i}>0\}}\sum\limits_{j=i}^\infty z_{-j}.$$
The three sums each tell us something about the integer partition:
\begin{itemize}
\item $\sum\limits_{i=1}^\infty\1_{\{z_{-i}>0\}}$ is the number of distinct sizes of parts in the partition (as in Theorem \ref{thm: J is 1 identity}).
\item $\sum\limits_{i=1}^\infty i\1_{\{z_{-i}>0\}}$ tells us the ``minimal" integer that is partitioned by knowing the sizes of these parts (i.e.\ the new partition obtained when there is a single part of each size from the original partition).
\item Now we will look at, $\sum\limits_{i=1}^\infty\1_{\{z_{-i}>0\}}\sum\limits_{j=i}^\infty z_{-j}$. For some state $z\in\Omega$ suppose that $\sum\limits_{i=1}^\infty \1_{\{z_{-i}>0\}}=k$. Denote the $k$ occupied sites by $-i_1>\cdots>-i_k$. Then, $\sum\limits_{i=1}^\infty\1_{\{z_{-i}>0\}}\sum\limits_{j=i}^\infty z_{-j}=\sum\limits_{j=1}^k\sum\limits_{\ell=j}^kz_{-i_\ell}$, so that state $z$ has a corresponding integer partition with $k$ distinct parts, $(\sum\limits_{\ell=1}^k z_{-i_\ell},\sum\limits_{\ell=2}^k z_{-i_\ell},\cdots, z_{-i_k})$. Or equivalently, $\sum\limits_{i=1}^\infty\1_{\{z_{-i}>0\}}\sum\limits_{j=i}^\infty z_{-j}=\sum\limits_{j=1}^kjz_{-i_j}$, which describes the conjugate partition with parts of sizes exactly $1$ through $k$ (at least one of each size). We will call this the ``partition of partial sums" associated with the original partition. The conjugate pair of partitions are represented pictorially in Figure \ref{fig: partiton of partial sums} below. 
\end{itemize}

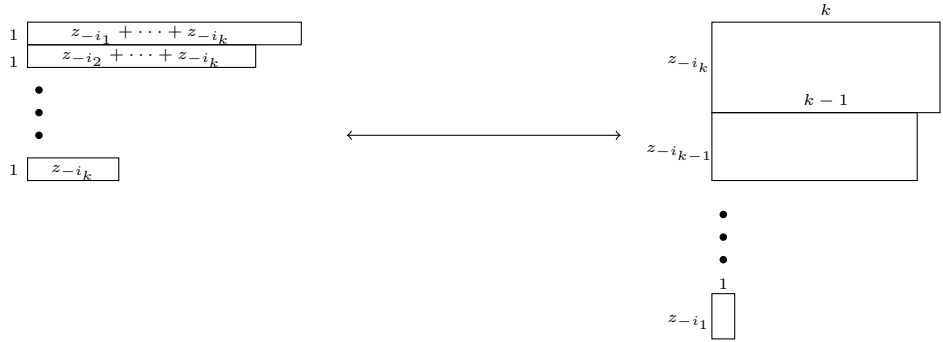
\begin{figure}[H]
    \centering 
    \begin{tikzpicture}[scale=0.6]
    \draw(-10,1.5)rectangle(-4,2);
    \draw(-10,1.5)rectangle(-5,1);
    \filldraw[black] (-9.75,0) circle (2pt);
    \filldraw[black] (-9.75,-0.5) circle (2pt);
    \filldraw[black] (-9.75,0.5) circle (2pt);
    \draw(-10,-1)rectangle(-8,-1.5);
    \node(a) at (-7.3,1.1)[label=\tiny{$z_{-i_1}+\cdots +z_{-i_k}$}]{};
    \node(a) at (-10.3,1.2)[label=\tiny{$1$}]{};
    \node(a) at (-7.5,0.6)[label=\tiny{$z_{-i_2}+\cdots +z_{-i_k}$}]{};
    \node(a) at (-10.3,0.6)[label=\tiny{$1$}]{};
    \node(a) at (-9,-1.9)[label=\tiny{$z_{-i_k}$}]{};
    \node(a) at (-10.3,-1.8)[label=\tiny{$1$}]{};
    \draw[<->] (-3,-0.5) -- (3,-0.5);
    \draw (5,0) rectangle (10,2);
    \draw (5,0) rectangle (9.5,-1.5);
    \draw (5,-4) rectangle (5.5, -5);
    \filldraw[black] (5.25,-2.25) circle (2pt);
    \filldraw[black] (5.25,-2.75) circle (2pt);
    \filldraw[black] (5.25,-3.25) circle (2pt);
    \node(a) at (7.5,1.75)[label=\tiny{$k$}]{};
    \node(a) at (7.5,-0.3)[label=\tiny{$k-1$}]{};
    \node(a) at (5.25,-4.3)[label=\tiny{$1$}]{};
    \node(a) at (4.5,0.5)[label=\tiny{$z_{-  i_k}$}]{};
    \node(a) at (4.3,-1.5)[label=\tiny{$z_{-i_{k-1}}$}]{};
    \node(a) at (4.5,-5.2)[label=\tiny{$z_{-i_1}$}]{};
    \end{tikzpicture}
    \caption{The ``partition of partial sums" and its conjugate associated to the partition $\sum\limits_{j=1}^kj\cdot z_{-i_j}$ where $i_k>\cdots>i_1$.}
    \label{fig: partiton of partial sums}
\end{figure}

So we can see that this is a generating function,
\begin{equation}\label{eq: gen func J is i}   \sum\limits_{z\in\Omega}\prod\limits_{i=1}^\infty y^{\1_{\{z_{-i}>0\}}(n+i-\sum\limits_{j=i}^\infty z_{-j})}Q^{iz_{-i}}=\sum\limits_{m,k,\ell,r\geq 0}a_{m,k,\ell,r}Q^m(y^{n})^ky^\ell(y^{-1})^r,
    \end{equation}
where $a_{m,k,\ell,r}$ is the number of integer partitions of $m$ with $k$ distinct sizes of part, ``minimal" partition totalling $\ell$ and ``partition of partial sums" totalling $r$, as described above.
\begin{rem}
The way we have written the generating function in \eqref{eq: gen func J is i} suggests that it might be natural to consider a four variable generating function here instead. Or possibly a three variable generating function with the power of one variable giving the difference between the ``minimal" partition and ``partition of partial sums", to ensure convergence.  Perhaps there is a generalisation of the Ising model we have considered here whose stationary measure naturally gives the equivalent generating function, with more variables, to \eqref{eq: gen func J is i}.
\end{rem}
\newpage From the above we see that $Z_n(Q,y)$ is a 2 variable generating function for integer partitions with the power of $y$ encoding properties about the partition, namely the number of distinct sizes of part, the sizes of the ``minimal" associated partition and the ``partition of partial sums". Now we look to give a direct combinatorial interpretation to,
\begin{align*}
    &Z_n(Q,y) = 1 + \sum_{L,R>0} y^{nL+(n-1)R}\sum_{\substack{\ell_1,\dots,\ell_L\geq 1 \\ m_1,\dots,m_{R}\geq 1}}\1_{\big\{\sum_{j=1}^R m_j= \sum_{j=1}^L\ell_j\big\}} \prod_{j=1}^{L} \frac{Q^{\tfrac12 \ell_{j}(\ell_{j}-1)}\big(y^{-(L-j+1)}Q^{\ell_{j}+\cdots + \ell_{L}}\big)^{\ell_{j-1}+1}}{1-y^{-(L-j+1)}Q^{\ell_{j}+\cdots + \ell_{L}}} \\
    &\qquad\qquad\qquad\prod_{j=1}^{R}\frac{Q^{\tfrac12 m_{j}(m_{j}-1)}\big(y^{R-j+1}Q^{m_{j}+\cdots+m_{R}}\big)^{m_{j-1}+1}}{1-y^{R-j+1}Q^{m_{j}+\cdots+m_{R}}}\bigg(y^{-n}+\big(1-y^{-n}\big)y^{R-L}Q^{\ell_1+\cdots + \ell_L}Q^{m_{1}+\cdots+m_{R}}\bigg)
\end{align*}   
with $\ell_0=-1$ and $m_0=0$. Just as before, it will be useful to pull the constant 1 into the sum, we see that it is simply the value of the summand when $L=R=0$.
\par We note that this looks very similar to $Z(Q,z,y)$, in particular the powers of $Q$ give the same partitions as we saw in Section \ref{sec: comb j is 1} (see Figure \ref{fig: J is 1 Z partition} for example). Notice that, $Z_n(Q,y)$ is only a 2 variable function; the absence of a $z$ variable and also the indicator that $\sum\limits_{j=1}^Rm_j=\sum\limits_{j=1}^L\ell_j$, tells us that we are now looking at just the constant in $z$ (viewing these as similar to $Z(Q,z,y)$) and thus by the ``General Principle" of Andrews \cite{Andrews_GFP}, we are only considering generalised Frobenius partitions with offset 0. Lastly we note that the power of $y$ here is very different to the power of $y$ in $Z(Q,z,y)$ and so counts something different for these GFPs. 
\par Let us consider the power of $y$ in a general term of, 
\begin{align}
\sum\limits_{L,R\geq0}y^{nL+(n-1)R}\sum_{\substack{\ell_1,\dots,\ell_L\geq 1 \\ m_1,\dots,m_{R}\geq 1}}&\1_{\big\{\sum_{j=1}^R m_j= \sum_{j=1}^L\ell_j\big\}} \prod_{j=1}^{L} \frac{Q^{\tfrac12 \ell_{j}(\ell_{j}-1)}\big(y^{-(L-j+1)}Q^{\ell_{j}+\cdots + \ell_{L}}\big)^{\ell_{j-1}+1}}{1-y^{-(L-j+1)}Q^{\ell_{j}+\cdots + \ell_{L}}} \notag \\
&\cdot\prod_{j=1}^{R}\frac{Q^{\tfrac12 m_{j}(m_{j}-1)}\big(y^{R-j+1}Q^{m_{j}+\cdots+m_{R}}\big)^{m_{j-1}+1}}{1-y^{R-j+1}Q^{m_{j}+\cdots+m_{R}}}. \label{eq: J=i Z part 1}
\end{align}
For some $L,R>0$, $\alpha_1,\cdots,\alpha_L, \beta_1, \cdots, \beta_R\geq0$, and $\ell_1,\cdots,\ell_L,m_1,\cdots, m_R\geq 1$ such that $\sum\limits_{j=1}^Rm_j=\sum\limits_{j=1}^L\ell_j$, also let $\ell_0=-1$ and $m_0=0$, we have 
\begin{align*}
&nL+(n-1)R-\sum\limits_{j=1}^L (\alpha_j+l_{j-1}+1)(L-j+1)+\sum\limits_{k=1}^R(\beta_k+m_{k-1}+1)(R-k+1)\\
&=n(L+R) + \left\{\sum\limits_{k=1}^R(\beta_k+m_{k-1}+1)(R-k+1)-R\right\}-\sum\limits_{j=1}^L (\alpha_j+l_{j-1}+1)(L-j+1).
\end{align*}
We see that this can be split into three parts:
\begin{itemize}
    \item $n(L+R)$, which is $n$ times the number of runs of parts which differ in size by exactly one (each row of the GFP treated separately), as we have seen previously. 
    \item $\sum\limits_{k=1}^R(\beta_k+m_{k-1}+1)(R-k+1)-R$, which can be seen as a ``minimal" partition corresponding to the partition $\sum\limits_{k=1}^R \frac{m_k(m_k-1)}{2}+(\beta_k+1+m_{k-1})(m_k+\cdots+m_R)$. That is, looking at the repeated parts only, i.e.\ $\sum\limits_{k=1}^R (\beta_k+1+m_{k-1})(m_k+\cdots+m_R)$, the minimal number we can partition is by taking $m_k=1$ for all $k\in\{1,\cdots,R\}$ (noting that we also remove one part of size $R$). For example see Figure \ref{fig: y power as partitions J is i} below.
    \item $-\sum\limits_{j=1}^L (\alpha_j+l_{j-1}+1)(L-j+1)$, is the negative of the ``minimal" partition (in the same way as above) corresponding to the partition $\sum\limits_{j=1}^L\frac{\ell_j(\ell_j-1)}{2}+(\alpha_j+1+\ell_{j-1})(\ell_j+\cdots+\ell_L)$.
\end{itemize}

\begin{figure}[H]
    \centering
    \begin{tikzpicture}[scale=0.5]
        \draw (0,0) rectangle (5,-4);
        \draw (0,-4) rectangle (4.5,-7);
        \filldraw[black] (0,-7.5) circle (2pt);
        \filldraw[black] (0,-8) circle (2pt);
        \filldraw[black] (0,-8.5) circle (2pt);
        \draw (0,-9.5) rectangle (0.5,-12);
        \node(a) at (-1,-2.5) [label=\tiny{$\beta_1$}]{};
        \node(a) at (-1.5,-6) [label=\tiny{$\beta_2+m_1+1$}]{};
        \node(a) at (-2,-11.5) [label=\tiny{$\beta_R+m_{R-1}+1$}]{};
        \node (a) at (2.5,-0.2) [label=\tiny{$R$}]{};
        \node (a) at (2.5,-4.2) [label=\tiny{$R-1$}]{};
        \node (a) at (0.25,-9.7) [label=\tiny{$1$}]{};
    \end{tikzpicture}
    \caption{The partition, $\sum\limits_{k=1}^R(\beta_k+m_{k-1}+1)(R-k+1)-R$.}
    \label{fig: y power as partitions J is i}
\end{figure}
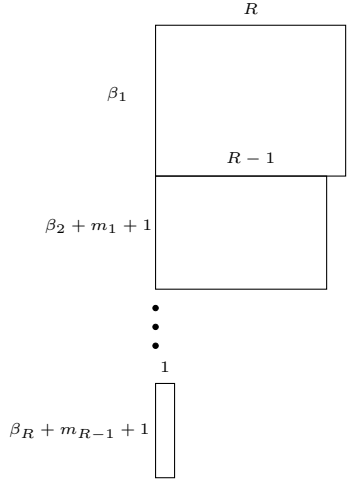
\par Now we consider the other term,
\begin{align}
\sum\limits_{L,R\geq0}y^{nL+(n-1)R}\sum_{\substack{\ell_1,\dots,\ell_L\geq 1 \\ m_1,\dots,m_{R}\geq 1}}&\1_{\big\{\sum_{j=1}^R m_j= \sum_{j=1}^L\ell_j\big\}} \prod_{j=1}^{L} \frac{Q^{\tfrac12 \ell_{j}(\ell_{j}-1)}\big(y^{-(L-j+1)}Q^{\ell_{j}+\cdots + \ell_{L}}\big)^{\ell_{j-1}+1}}{1-y^{-(L-j+1)}Q^{\ell_{j}+\cdots + \ell_{L}}} \notag\\
&\cdot\prod_{j=1}^{R}\frac{Q^{\tfrac12 m_{j}(m_{j}-1)}\big(y^{R-j+1}Q^{m_{j}+\cdots+m_{R}}\big)^{m_{j-1}+1}}{1-y^{R-j+1}Q^{m_{j}+\cdots+m_{R}}}\cdot y^{R-L}Q^{\ell_1+\cdots+\ell_L}Q^{m_1+\cdots+\m_R}. \label{eq: J=i Z part 2}
\end{align}
As in Section \ref{sec: comb j is 1} when we multiply through by $Q^{\ell_1+\cdots+\ell-L}Q^{m_1+\cdots+m_R}$ this means that the conjugate partitions of the rows of the GFP have at least one part of size $m_1+\cdots+m_R$ in the first row and $\ell_1+\cdots+\ell_L$ in second row as opposed to potentially none. We also multiply through by $y^{R-L}$, this accounts for the fact that now the ``minimal" partitions for each row, as described above, have an extra part of size $R$ or $L$.
\par So $Z_n(Q,y)=y^{-n}\times\eqref{eq: J=i Z part 1}+(1-y^{-n})\times\eqref{eq: J=i Z part 2}$ is a linear combination of three 2-variable generating functions. First, $y^{-n}\times\eqref{eq: J=i Z part 1}$ is the generating function for offset 0 generalised Frobenius partitions where the rows are the conjugate partitions of the partitions as given in Figure \ref{fig: J is 1 Z partition}. The power of $y$ can be split into three parts; $n(L+R-1)$, where $L+R$ is the number of runs of parts that differ by 1 (each row treated separately), the total of ``minimal" partition associated to the first row and minus of the total of the ``minimal" partition associated to the second row (as given in Figure \ref{fig: y power as partitions J is i}). Next we have, $(1-y^{-n})\times\eqref{eq: J=i Z part 2}$, which is a sum of two generating functions. Both are again generating functions for offset 0 GFPs with rows the conjugates of partitions as in Figure \ref{fig: J is 1 Z partition} but with at least one part of size $\ell_1+\cdots+\ell_L$ (or equivalently $m_1+\cdots+m_R$) rather than potentially none. Again the power of $y$ in each case can be split into three parts. As before we have the parts that give the totals of ``minimal" partitions associated to each row. The third part is either $n(L+R)$ or $n(L+R-1)$, where again $L+R$ is the number of runs of parts which differ by exactly 1 (each row treated separately).
\par \vspace{3mm} So the identities (one for each $n\in\mathbb{Z}$) we have proved in Theorem \ref{thm: J is i identity} can be seen as a rather surprising equivalence of generating functions for certain integer and offset 0 generalised Frobenius partitions. One side of the identities, 
$$\sum\limits_{z\in\Omega}\prod\limits_{i=1}^\infty y^{\1_{\{z_{-i}>0\}}(n+i-\sum\limits_{j=i}^\infty z_{-j})}Q^{iz_{-i}}=\sum\limits_{m,k,\ell,r\geq 0}a_{m,k,\ell,r}Q^m(y^{n})^ky^\ell(y^{-1})^r$$
is the two variable generating function for integer partitions, with $a_{m,k,\ell,r}$ the number of integer partitions of $m$ with $k$ distinct sizes of parts, ``minimal partition" of size $\ell$ and ``partition of partial sums" of size $r$. The other side, $Z_n(Q,y)$ is a linear combination of generating functions for offset 0 generalised Frobenius partitions with rows forming partitions like in Figure \ref{fig: J is 1 Z partition}. The power of $y$ encodes information about the number of runs of parts differing in size by 1 (each row treated separately) as well as ``minimal" partition sizes for each row of the GFP as discussed above, see Figure \ref{fig: y power as partitions J is i} for example.

\section{The Ising model with long range interactions and Kawasaki Dynamics}

We have seen above that the Ising model with inhomogeneous nearest neighbour coupling constants and inhomogeneous external field can be related to a process of interacting particles which is precisely the zero-range process when $\beta=0$. In this section we consider the Ising model with long-range interactions, find reversible dynamics for the associated Kawasaki dynamics, and using the stand up map $T^n$ relate this model to a particle model with somewhat unusual long-range jumps that inherits reversibility from the Ising model. We then take the opposite approach by defining a desirable dynamics for the particle system and use the stand up map $T^n$ to find a measure on $\Bcal^n$ for which the inherited dynamics are reversible, this can then be transferred back to the particle system. 

To begin, consider the following hamiltonian on $\Omega^{\Is}$ given by
\begin{equation}
H_{J}(\sigma)=\tfrac{1}{2}\sum_{i,j\in\Z}J(i,j)\1_{\{\sigma_i\neq \sigma_j\}},
\end{equation}
where $J:\Z\times\Z\to\R$ with $J(i,j)=J(j,i)\geq 0$ for all $i,j\in \Z$ and $J(i,i)=0$. We could relax the requirements of symmetry and positivity of $J$ and obtain conditions for concentration on $\Bcal$ as in Lemma \ref{lem:longrangeconcentration} that are less concise. For simplicity we assume that $J$ is positive and symmetric in its arguments.

We also introduce the inhomogeneous external field from above. Recall the function $f_c(\sigma)$ defined in \eqref{eq:fc}, we consider the probability measure
\begin{equation}\label{eq: long range mu}
\mu^c_{J,\beta,q}(\sigma)=\mu^c_J(\sigma)\propto e^{-\beta H_{J}(\sigma)}q^{f_c(\sigma)}.
\end{equation}
As above, we want to show that this measure is concentrated on $\Bcal$ under some conditions on $J$. We additionally need that $H_{J}(\sigma)$ is finite for $\sigma\in \Bcal$, this was automatic in the case of nearest neighbour interactions but in the case of long-range interactions we require some decay on $J(i,j)$ as $|i-j|\to\infty$. A sufficient condition is stated in Lemma \ref{lem:longrangeconcentration} below.

Now we want to find reversible dynamics for $\mu^c_J$, finding these dynamics is very similar to the case of nearest-neighbour spins. Suppose that $\sigma$ and $\sigma^\prime$ agree except at sites $k$ and $\ell$, where $\sigma_k=1,\sigma_\ell=-1$ and $\sigma^\prime_k=-1,\sigma^\prime_\ell=1$. We have that
\begin{equation}
\frac{\mu^c_J(\sigma^\prime)}{\mu^c_J(\sigma)}=\frac{\exp\big(-\beta\sum_{i\neq k,\ell}J(i,\ell)\1_{\{\sigma_i=-1\}}-\beta\sum_{j\neq k,\ell}J(j,k)\1_{\{\sigma_j=1\}}\big)q^{2(k-c)}}{\exp\big(-\beta\sum_{i\neq k,\ell}J(i,\ell)\1_{\{\sigma_i=1\}}-\beta\sum_{j\neq k,\ell}J(j,k)\1_{\{\sigma_j=-1\}}\big)q^{2(l-c)}}
\end{equation}
where we used that $\sigma$ and $\sigma^\prime$ agree for $i\neq k,\ell$ and also cancelled the $J(k,\ell)$ term that appears in $\mu^c_J(\sigma^\prime)$ and $\mu^c_J(\sigma)$.
 By the detailed balance equation and using that $\1_{\{\sigma_i=\pm1\}}=(1\pm\sigma_i)/2$ we find that the required rates satisfy
$$
\frac{w(\sigma,\sigma^\prime)}{w(\sigma^\prime,\sigma)}=\exp\bigg(\hspace{-2pt}-\beta\hspace{-2pt}\sum_{i\neq \ell,k}(J(i,k)-J(i,\ell))\sigma_i\hspace{-2pt}\bigg)q^{2(k-l)} =\frac{\exp\bigg(-\frac{\beta}{2}\sum_{i\neq \ell,k}\big(J(i,k)\sigma_k\sigma_i + J(i,\ell)\sigma_\ell\sigma_i\big)\bigg)q^{k-\ell}}{\exp\bigg(-\frac{\beta}{2}\sum_{i\neq \ell,k}\big(J(i,k)\sigma_k^\prime\sigma_i^\prime + J(i,\ell)\sigma_\ell^\prime\sigma_i^\prime\big)\bigg)q^{\ell-k}} .
$$

This suggests that for $\sigma$ and $\sigma^\prime$ as above we should take
\begin{equation}
w(\sigma,\sigma^{\prime})=\frac{1}{2}q^{k-\ell}\bigg(1-\tanh\Big(\frac{\beta}{2}\sum_{i\neq \ell,k}\big(J(i,k)\sigma_k\sigma_i + J(i,\ell)\sigma_\ell\sigma_i\big)\Big)\bigg).
\end{equation}

\begin{lem}\label{lem:longrangeconcentration}
Let $c\in \R$, $\beta\geq 0$ and $q\in(0,1)$. Let $J:\Z\times\Z\to\R$ with $J(i,j)=J(j,i)\geq 0$ for all $i,j\in \Z$ and $J(i,i)=0$. The measure $\mu^c_J$ is stationary and reversible for the Kawasaki dynamics described above with rates given by $w(\sigma,\sigma^{\prime})$. Suppose that 
\begin{align*}
&\sum_{i\leq0}e^{\beta\sum_{j<i< k}J(j,k)}\bigg(1+e^{\beta J(i-1,i)}q^{2(i-c)}
\Big(e^{\beta\sum_{j>i}J(i,j)} + q^{2}e^{\beta\sum_{j<i-1}J(i-1,j)}\Big)^{-1}\bigg)^{-1}
\\
&+ \sum_{i>0}e^{\beta\sum_{j<i< k}J(j,k)}\bigg(1+e^{\beta J(i-1,i)}q^{-2(i-c)}
\Big(e^{\beta\sum_{j>i}J(i,j)} + q^{-2}e^{\beta\sum_{j<i-1}J(i-1,j)}\Big)^{-1}\bigg)^{-1} <\infty
\end{align*}
then $H_{J}(\sigma)<\infty$ for every $\sigma\in\Bcal$ and the measure $\mu^c_J$ concentrates on $\Bcal$.
\end{lem}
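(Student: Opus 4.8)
The plan is to mirror the proof of Lemma~\ref{lem:nearestneighbourconcentration}. The stationarity and reversibility statement is again immediate from detailed balance: the rates $w(\sigma,\sigma')$ were defined precisely so that $w(\sigma,\sigma')\mu^c_J(\sigma)=w(\sigma',\sigma)\mu^c_J(\sigma')$ whenever $\sigma,\sigma'$ differ by exchanging unequal spins at two sites $k$ ($=+1$ in $\sigma$) and $\ell$ ($=-1$ in $\sigma$), and checking this is a one-line computation using $\1_{\{\sigma_i=\pm1\}}=(1\pm\sigma_i)/2$, the symmetry $J(i,j)=J(j,i)$, and $\tfrac{1-\tanh x}{1+\tanh x}=e^{-2x}$; I would leave it to the reader as is done there. (Once the concentration statement below is in force, the total jump rate out of each configuration of $\Bcal$ is finite, so the dynamics is a genuine Markov process.) Two genuinely new things have to be done: show $H_J(\sigma)<\infty$ for $\sigma\in\Bcal$, and redo the Borel--Cantelli estimate in the presence of long-range couplings.

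For the first, fix $\sigma\in\Bcal$; it has only finitely many neighbouring disagreements, say at the edges $(k_1,k_1{+}1),\dots,(k_m,k_m{+}1)$ with $k_1<\cdots<k_m$. Since for $j<k$ one has $\sigma_j\neq\sigma_k$ exactly when an odd (hence positive) number of the $k_r$ lie in $\{j,\dots,k{-}1\}$, we get $\1_{\{\sigma_j\neq\sigma_k\}}\le\sum_{r=1}^m\1_{\{j\le k_r<k\}}$, whence
\begin{equation*}
H_J(\sigma)=\sum_{j<k}J(j,k)\1_{\{\sigma_j\neq\sigma_k\}}\ \le\ \sum_{r=1}^{m}\ \sum_{j\le k_r<k}J(j,k).
\end{equation*}
Each inner sum is the energy of the configuration carrying a single domain wall at the edge $(k_r,k_r{+}1)$, so it is enough that every single-wall energy $\sum_{j\le p<k}J(j,k)$ be finite; this is exactly the sort of decay of $J$ that the summability hypothesis is built to encode (it forces in particular $\sum_{j<i<k}J(j,k)<\infty$ for every $i$), and I would verify that it yields $\sum_{j\le p<k}J(j,k)<\infty$ for every $p$. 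Then $H_J(\sigma)<\infty$; and, as noted in the statement, concentration on $\Bcal$ rather than on one of the other tail behaviours follows once we know there are $\mu^c_J$-a.s.\ only finitely many disagreements, because any $\sigma$ with infinitely many $+1$'s on $\Z_{\le0}$ or infinitely many $-1$'s on $\Z_{>0}$ has $f_c(\sigma)=+\infty$ and hence $\mu^c_J$-probability $0$ (here $q\in(0,1)$).

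So the core is to bound $\mu^c_J(\mathcal I_i)$ for $\mathcal I_i=\{\sigma:\sigma_{i-1}\neq\sigma_i\}$ and sum over $i\le0$ and $i>0$ separately, then apply Borel--Cantelli. Split $\Z$ into $\{\dots,i{-}2,i{-}1\}$ and $\{i,i{+}1,\dots\}$ and write $H_J=H_J^{<i}+H_J^{\ge i}+W_i$ with $W_i=\sum_{j\le i-1}\sum_{k\ge i}J(j,k)\1_{\{\sigma_j\neq\sigma_k\}}$, noting that $f_c$ is a sum of single-site terms and so factorizes over the two halves. Because $J\ge0$ we have, on $\mathcal I_i$, the two-sided bound $J(i{-}1,i)\le W_i\le J(i{-}1,i)+\sum_{j<i<k}J(j,k)+\sum_{j<i-1}J(j,i)$, so pulling $e^{-\beta W_i}$ out of the numerator and denominator of $\mu^c_J(\mathcal I_i)$ costs only a bounded multiplicative factor and reduces us to the half-line sums $S^{(i)}(\pm1),T^{(i)}(\pm1)$ exactly as in Lemma~\ref{lem:nearestneighbourconcentration}. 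The same spin-flip bounds, $T^{(i)}(+1)\le T^{(i)}(-1)\,q^{-2(i-c)}e^{\beta\sum_{k>i}J(i,k)}$ and $S^{(i)}(+1)\le S^{(i)}(-1)\,q^{-2(i-1-c)}e^{\beta\sum_{j<i-1}J(i-1,j)}$ (with the flip performed in the appropriate direction according to the sign of $i$), then assemble into a bound on $\mu^c_J(\mathcal I_i)$ of the shape appearing in the statement, and the hypothesis is precisely the summability of that bound.

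I expect the cross-cut term $W_i$ to be the main obstacle. For nearest-neighbour interactions $W_i$ equals the single bond $J(i{-}1)\1_{\{\sigma_{i-1}\neq\sigma_i\}}$ and the two half-line partition functions factor exactly, giving the clean closed form for $\mu^c_J(\mathcal I_i)$; with long-range $J$ the bonds straddling the cut couple the two halves, so only inequalities survive and $W_i$ must be controlled uniformly using the decay of $J$. The delicate point is the bookkeeping: after pulling out $e^{-\beta W_i}$ and running the flip argument one must be left with exactly
\begin{equation*}
\sum_{i\le0}e^{\beta\sum_{j<i<k}J(j,k)}\Big(1+e^{\beta J(i-1,i)}q^{2(i-c)}\big(e^{\beta\sum_{k>i}J(i,k)}+q^{2}e^{\beta\sum_{j<i-1}J(i-1,j)}\big)^{-1}\Big)^{-1}
\end{equation*}
(and the analogous sum over $i>0$), rather than something carrying extra cross-bond sums. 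Once that is pinned down, the remaining steps --- Borel--Cantelli, and the passage from ``finitely many disagreements'' to ``concentration on $\Bcal$'' via $f_c$ --- are routine and identical to the nearest-neighbour case.
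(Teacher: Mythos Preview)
Your proposal is correct and follows essentially the same approach as the paper: detailed balance gives reversibility, and concentration on $\Bcal$ is proved by a Borel--Cantelli argument bounding $\mu^c_J(\mathcal I_i)$ via the half-line partition functions $S^{(i)}(\pm1),T^{(i)}(\pm1)$, with the cross-cut interaction $W_i=\sum_{j\le i-1,\,k\ge i}J(j,k)\1_{\{\sigma_j\ne\sigma_k\}}$ controlled by the two-sided bounds coming from $J\ge0$, and the same spin-flip inequalities $T^{(i)}(+1)\le T^{(i)}(-1)q^{-2(i-c)}e^{\beta\sum_{j>i}J(i,j)}$, $S^{(i)}(+1)\le S^{(i)}(-1)q^{-2(i-1-c)}e^{\beta\sum_{j<i-1}J(i-1,j)}$. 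One small refinement: to obtain \emph{exactly} the stated bound you should, as the paper does, split the denominator $Z^J_{\beta,q,c}$ into its $\mathcal I_i$ and $\mathcal I_i^c$ parts and use the sharper upper bound $W_i\le\sum_{j<i\le k}J(j,k)-J(i-1,i)$ on $\mathcal I_i^c$ (since then the $(i-1,i)$ bond contributes zero); this is what produces the extra factor $e^{\beta J(i-1,i)}$ multiplying $\big(\tfrac{T(+1)}{T(-1)}+\tfrac{S(+1)}{S(-1)}\big)^{-1}$ and, after cancelling one $e^{\beta J(i-1,i)}$ against the prefactor, reduces $\sum_{j<i\le k}$ to $\sum_{j<i<k}$ outside the bracket. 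Your explicit argument that $H_J(\sigma)<\infty$ on $\Bcal$ via the single-wall bound $\1_{\{\sigma_j\ne\sigma_k\}}\le\sum_r\1_{\{j\le k_r<k\}}$ is a useful addition that the paper leaves implicit.
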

\begin{rem}
As noted above, the assumption that $J$ is symmetric and non-negative can be relaxed to give conditions for concentration on $\Bcal$. 
In the case $J(j,k)=0$ whenever $|j-k|>1$ this condition reduces to the condition for nearest neighbour interactions in Lemma \ref{lem:nearestneighbourconcentration}.
In general, a sufficient condition for the sum to be finite is that $J(i,j)=O(|i-j|^{-(2+\eps)})$ for some $\eps>0$ and $J(i-1,i)$ satisfies the condition in Lemma \ref{lem:nearestneighbourconcentration}. 
\end{rem}
\begin{proof}
Define 
$$
\Ical_i=\{\sigma\in\Omega^{\Is}\,:\, \sigma_i\neq \sigma_{i-1}\}.
$$
In analogy with Lemma \ref{lem:nearestneighbourconcentration} we also define $\Omega^{<i}=\{\pm1\}^{\{\dots,i-2,i-1\}}$ and $\Omega^{\geq i}=\{\pm1\}^{\{i,i+1,\dots\}}$ and
$$
S^{(i)}(\pm1)=\sum_{\sigma\in\Omega^{<i}}e^{-\beta H_{J}^{<i}(\sigma)}q^{f_c^{<i}(\sigma)}\1_{\{\sigma_{i-1}=\pm1\}},\qquad 
T^{(i)}(\pm1)=\sum_{\sigma\in\Omega^{\geq i}}e^{-\beta H_{J}^{\geq i}(\sigma)}q^{f_c^{\geq i}(\sigma)}\1_{\{\sigma_{i}=\pm1\}}
$$
where $H_{J}^{< i}(\sigma)=\tfrac12\sum_{j,k <i}J(k,j)\1_{\{\sigma_k\neq \sigma_j\}}$, $H_{J}^{\geq i}(\sigma)=\tfrac12\sum_{j,k \geq i}J(k,j)\1_{\{\sigma_k\neq \sigma_j\}}$, $f_c^{<i}=-\sum_{j=-\infty}^{i-1}(j-c)(1+\sigma_j)$, and finally $f_c^{\geq i}(\sigma)=f_c(\sigma)-f_c^{<i}(\sigma)$. With this notation in place we have that
\begin{equation}
\begin{aligned}
\sum_{\sigma\in\Ical_i}e^{-\beta H_{J}(\sigma)}q^{f_c(\sigma)}
&=\sum_{\sigma\in\Ical_i} 
e^{-\beta\sum_{j<i\leq k}J(j,k)\1_{\{\sigma_j\neq\sigma_k\}}}
e^{-\beta H_{J}^{<i}(\sigma)}
q^{f_c^{<i}(\sigma)}
e^{-\beta H_{J}^{\geq i}(\sigma)}
q^{f_c^{\geq i}(\sigma)}
\\
&\leq e^{-\beta J(i-1,i)}\big( S^{(i)}(-1)T^{(i)}(1) + S^{(i)}(1)T^{(i)}(-1)\big)
\end{aligned}
\end{equation}
and similarly
\begin{equation}
\begin{aligned}
\sum_{\sigma\in\Omega^{\Is}}e^{-\beta H_{J}(\sigma)}q^{f_c(\sigma)}\geq &e^{-\beta\sum_{j<i\leq k}J(j,k)}\big( S^{(i)}(-1)T^{(i)}(1) + S^{(i)}(1)T^{(i)}(-1)\big) 
\\
&+ e^{-\beta\sum_{j<i\leq k}J(j,k)+\beta J(i-1,i)}\big( S^{(i)}(-1)T^{(i)}(-1) + S^{(i)}(1)T^{(i)}(1)\big).
\end{aligned}
\end{equation}
For $i\leq 0$, similarly to the proof of Lemma \ref{lem:nearestneighbourconcentration} this gives us the following bounds
\begin{align}
T^{(i)}(1)\leq& T^{(i)}(-1)q^{-2(i-c)}e^{\beta\sum_{j>i}J(i,j)},
    \\
S^{(i)}(1)\leq& S^{(i)}(-1)q^{-2(i-1-c)}e^{\beta \sum_{j<i-1}J(i-1,j)}.    
\end{align}
Our bound on $\mu_J^c(\Ical_i)$ then becomes
\begin{equation}
\begin{aligned}
\mu^c_J(\Ical_i)&\leq e^{\beta\sum_{j<i\leq k}J(j,k)}\left(e^{\beta J(i-1,i)}+e^{2\beta J(i-1,i)}\left(\frac{T^{(i)}(1)}{T^{(i)}(-1)} + \frac{S^{(i)}(1)}{S^{(i)}(-1)}\right)^{-1}\right)^{-1}
\\
&\leq e^{\beta\sum_{j<i< k}J(j,k)}\left(1+e^{\beta J(i-1,i)}\left(q^{-2(i-c)}e^{\beta\sum_{j>i}J(i,j)} + q^{-2(i-1-c)}e^{\beta\sum_{j<i-1}J(i-1,j)}\right)^{-1}\right)^{-1}
\\
&= e^{\beta\sum_{j<i< k}J(j,k)}\bigg(1+e^{\beta J(i-1,i)}q^{2(i-c)}
\Big(e^{\beta\sum_{j>i}J(i,j)} + q^{2}e^{\beta\sum_{j<i-1}J(i-1,j)}\Big)^{-1}\bigg)^{-1}.
\end{aligned}
\end{equation}
For $i>0$ we have
$$
\mu^c_J(\Ical_i)\leq  e^{\beta\sum_{j<i< k}J(j,k)}\bigg(1+e^{\beta J(i-1,i)}q^{-2(i-c)}
\Big(e^{\beta\sum_{j>i}J(i,j)} + q^{-2}e^{\beta\sum_{j<i-1}J(i-1,j)}\Big)^{-1}\bigg)^{-1}.
$$
\end{proof}

\par \noindent As for nearest-neighbour interactions there is a unique stationary distribution for these dynamics on $\Bcal^n$.

\begin{prop}\label{stationary dist on B^n}
The unique stationary distribution on $\mathcal{B}^n$ is given by, 
$$\nu^n_{J,\beta,q}(\sigma)\defeq \mu^c_{J,\beta,q}(\sigma|N(\sigma)=n)= \frac{\mu^c_J(\sigma)\1_{\{N(\sigma)=n\}}}{\mu^c_J(\{N=n\})}.$$
\end{prop}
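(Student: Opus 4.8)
The plan is to reduce the statement to two standard facts: conditioning a reversible measure on an invariant event produces a measure that is reversible (hence stationary) for the restricted dynamics, and a countable, non-explosive, irreducible continuous-time Markov chain that admits a stationary probability law has no other stationary law.

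First I would check that $\nu^n_{J,\beta,q}$ is a genuine probability measure on $\mathcal{B}_n$ (as defined in \eqref{def:Bn}). By Lemma~\ref{lem:longrangeconcentration}, together with the standing assumption that $H_J<\infty$ on $\mathcal{B}$, the measure $\mu^c_J$ concentrates on $\mathcal{B}$, which the conserved quantity $N$ of \eqref{def:N} partitions as $\mathcal{B}=\bigsqcup_{n\in\mathbb{Z}}\mathcal{B}_n$. Hence $\sum_n\mu^c_J(\{N=n\})=1$, so $\mu^c_J(\{N=n\})\in(0,1]$; strict positivity follows from $\mu^c_J(\sigma^n)>0$, where $\sigma^n$ is the reference configuration with $\sigma^n_i=-1$ for $i\le n$ and $\sigma^n_i=+1$ for $i>n$. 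So $\nu^n_{J,\beta,q}=\mu^c_J(\,\cdot\mid N=n)$ is well defined and supported on $\mathcal{B}_n$.

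Next I would record that the long-range Kawasaki dynamics conserves $N$: a swap of $\sigma_k=+1$ with $\sigma_\ell=-1$ neither creates nor destroys spins, and a short case analysis on the signs of $k$ and $\ell$ shows that $N_-$ and $N_+$ change by equal amounts, so $N(\sigma)=N(\sigma')$ whenever $w(\sigma,\sigma')>0$. Thus $\{N=n\}$ is invariant, the dynamics restrict to a Markov process on $\mathcal{B}_n$ with the same rates, and dividing the detailed balance identity $\mu^c_J(\sigma)w(\sigma,\sigma')=\mu^c_J(\sigma')w(\sigma',\sigma)$ of Lemma~\ref{lem:longrangeconcentration} by the constant $\mu^c_J(\{N=n\})$ shows $\nu^n_{J,\beta,q}$ satisfies detailed balance on $\mathcal{B}_n$, hence is stationary. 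For uniqueness I would establish irreducibility: every rate $w(\sigma,\sigma')=\tfrac12 q^{k-\ell}\big(1-\tanh(\cdots)\big)$ is strictly positive since $\tanh\in(-1,1)$ and $q>0$, and from any $\sigma\in\mathcal{B}_n$ one reaches $\sigma^n$ in finitely many admissible swaps by repeatedly exchanging the leftmost $+1$ spin lying to the left of some $-1$ spin with that $-1$ spin, which strictly decreases the finite number of such inversions; reversing a path from $\sigma^n$ to a target $\tau\in\mathcal{B}_n$ then connects $\sigma$ to $\tau$, so $\mathcal{B}_n$ is a single communicating class. Positive recurrence (guaranteed by the existence of the stationary probability $\nu^n_{J,\beta,q}$) then forces it to be the unique stationary distribution.

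The only genuinely delicate point I anticipate is the non-explosiveness of the chain on $\mathcal{B}_n$: unlike the nearest-neighbour case, the total outgoing rate at a state is a sum over infinitely many long-range swaps, so one must use the decay of $J(i,j)$ and of $q^{k-\ell}$ — precisely as quantified in the summability hypothesis of Lemma~\ref{lem:longrangeconcentration} — to see that these rates are summable and that no explosion occurs. Everything else is bookkeeping identical to the nearest-neighbour proposition.
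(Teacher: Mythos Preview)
Your approach is correct and is precisely the standard argument the paper is implicitly invoking: the proposition is stated in the paper without proof (both in the nearest-neighbour and the long-range sections), so there is no ``paper's proof'' to compare against beyond the tacit appeal to reversibility plus irreducibility on a countable state space. Your write-up supplies exactly the missing details --- invariance of $\{N=n\}$, inheritance of detailed balance under conditioning, and irreducibility via the inversion-decreasing path to $\sigma^n$ --- and your flagging of non-explosion in the long-range case as the only genuinely delicate point is apt, since each state has infinitely many outgoing transitions and one does need the decay of $q^{k-\ell}$ together with the $\tanh$ damping (controlled by the summability hypothesis on $J$) to bound the total exit rate; the paper does not address this explicitly.
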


\subsection{Transferring dynamics to a particle system}~

The stand up map, $T^n:\Bcal^n\to \Omega$ (see Definition \ref{def: stand up}) now transfers these long range interactions for the Ising chain to a particle system with long range jumps.

The long-range dynamics described above correspond to an intriguing dynamics for the particle system. We begin with the general case and then look at restricted dynamics that are more simple and potentially more natural.

Let us first consider the `bulk' dynamics, that is all particle jumps that do not include particles leaving or entering through the boundary to the right of site $-1$. Before giving the precise description of the dynamics, let us explain a step of the dynamics in an informal way. First we take the convention that for a site with more than one particle, the particles below a given particle as counted as being on `the right' of the particle and the particles above it are counted as being on `the left' of the particle. This will streamline the description of our dynamics slightly. 
A particle can move (hop) up to a certain distance, which is a function of the number of sites and the number of particles between its start and end location. When it moves (hops), potentially from a position in the middle of a stack of particles, it `lands' on the top of the stack of particles at its target site. If the hopping particle moved left/right then the particles left/right of the hopping particle, up to but not including those at the target site, move one site to the left/right. The moving particles that are at the site adjacent to the target site move on top of the hopping particle.

Now we give a precise description of the `bulk' dynamics. Consider the $m^{th}$ particle on site $-i$ which has $\omega_{-i}\geq m$ particles. As part of one step of the dynamics this particle may hop to a site $-j$, taking position $\omega_{-j}+1$ at site $-j$. If the particle has hopped to the right (resp. left) then the particles in positions $1,\dots,m-1$ (resp. $m+1,\dots, \omega_{-i}$) at site $-i$, and each particle on a site strictly between sites $-i$ and $-j$ move one site in the direction of the hopping particle (i.e. one site to the left if $-i>-j$ and one site to the right if $-i<-j$). This includes the particles at the site neighbouring site $-j$ which hop onto site $-j$, taking positions $\omega_{-j}+2,\dots$ and merging the stacks of particles.
The \emph{length} or \emph{distance} of this hop is defined to be $\sum\limits_{s=min\{-i,-j\}+1}^{max\{-i,-j\}-1}(\omega_{-s}+ 1) + (\omega_{-j}+ 1) + m-1$ if the particle hops to the right and $\sum\limits_{s=min\{-i,-j\}+1}^{max\{-i,-j\}-1}(\omega_{-s}+ 1) + (\omega_{-j}+ 1) + \omega_{-i}-m$ if the particle hops to the left. 

By using the mapping between the particle system and the Ising model we see that this corresponds to exchanging a $+1$ and $-1$ spin in the Ising chain whose distance is the distance of the hop. For examples of these dynamics and the spin swaps in the Ising chain that they arise from see Figures \ref{fig: make stack} and \ref{fig:long dynamics} below.

Call the initial configuration $\omega$ and the final configuration $\omega^\prime$ and denote by $d(\omega,\omega^\prime)$ the distance of the hop taking $\omega$ to $\omega^\prime$. 
Note that if $S^{(n)}_r(\sigma)$ is the location of the $r^{th}$ positive spin from the left for $\sigma\in\Bcal^n$, as defined in Definition \ref{def: stand up}, the stand up map $T^n:\Bcal_n\to\Omega$ gives an expression $S_r^{(n)}(\omega) = n + r + \sum\limits_{i=r}^\infty \omega_{-i}$ where $\omega=T^n(\sigma)$. Hence the dynamics inherited from the Ising model via the stand up map $T^n$ have a dependence on $n$. 

To find the rates of jumps on $\Omega $ we first massage the jump rates for the Ising chain slightly.  Suppose that $\sigma$ and $\sigma^\prime$ agree except at sites $\ell$ and $k$, where $\sigma_k=1,\sigma_\ell=-1$ and $\sigma^\prime_k=-1,\sigma^\prime_\ell=1$, then
\begin{equation}
w(\sigma,\sigma^{\prime})=\frac{1}{2}q^{k-\ell}\Bigg(1-\tanh\bigg(\frac{\beta}{2}\sum_{i\in \Z}\Big[(J(i,\ell)-J(i,k)) + 2(J(i,k)-J(i,\ell))\1_{\{\sigma_i=1\}} \Big] \bigg)\Bigg).
\end{equation}
Thus the step of the dynamics described above occurs with rate
\par\vspace{5mm}\begin{equation}
    \begin{aligned}
        &w^{(n)}(\omega,\omega^\prime) = \frac12 q^{d(\omega,\omega^\prime)}
     \Bigg\{ 1- \tanh\Bigg[\frac{\beta}{2}\sum_{a\in\Z}\bigg(J\Big(a,n+i-\sum_{r=i}^\infty\omega_{-r}+m\Big)-J\Big(a,n+j-\sum_{r=j}^\infty\omega_{-r}\Big)\Bigg) 
        \\
        &\,\,+ \beta \sum_{r=1}^\infty\bigg(J\Big(n+r-\sum_{s=r}^\infty\omega_{-s},n+j-\sum_{s=j}^\infty\omega_{-s}\Big) - J\Big(n+r-\sum_{s=r}^\infty\omega_{-s},n+i-\sum_{s=i}^\infty \omega_{-s} + m\Big)\bigg)\Bigg]\Bigg\}.
    \end{aligned}
\end{equation}

\newpage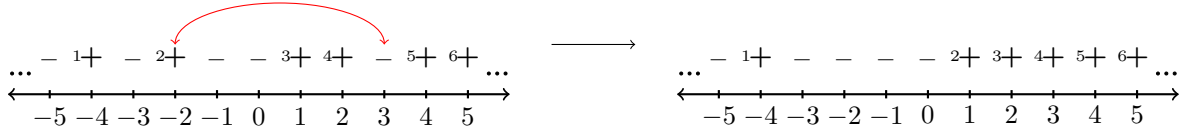
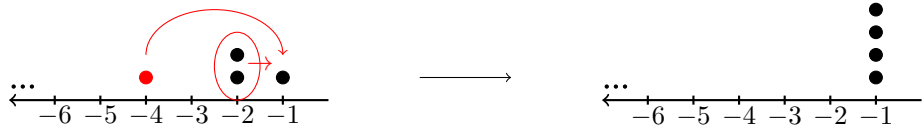
\begin{figure}[H]
    \centering
    \begin{subfigure}[b]{\textwidth}
    \centering
    \begin{tikzpicture}[scale=0.55]
    \draw[thick, <->] (-6,0.8)--(6,0.8);
\foreach \x in {-5,-4,-3,-2,-1,0,1,2,3,4,5}
    \draw[thick, -](\x cm, 0.9)--(\x cm, 0.7) node[anchor=north]{$\x$};
 
 \filldraw [black] (-5.5,1.3) circle (1pt);
\filldraw [black] (-5.7,1.3) circle (1pt);
\filldraw [black] (-5.9,1.3) circle (1pt);   
\node (a) at (-5,1) [label=\textbf{--}]{};
\node (a) at (-4,1) [label=\textbf{+}]{};
\node (a) at (-4.35,1.15) [label=\tiny{1}]{};
\node (a) at (-3,1) [label=\textbf{--}]{};
\node (a) at (-2,1) [label=\textbf{+}]{};
\node (a) at (-2.35,1.15) [label=\tiny{2}]{};
\node (a) at (-1,1) [label=\textbf{--}]{};
\node (a) at (0,1) [label=\textbf{--}]{};
\node (a) at (1,1) [label=\textbf{+}]{};
\node (a) at (0.65,1.15) [label=\tiny{3}]{};
\node (a) at (2,1) [label=\textbf{+}]{};
\node (a) at (1.65,1.15) [label=\tiny{4}]{};
\node (a) at (3,1) [label=\textbf{--}]{};
\node (a) at (4,1) [label=\textbf{+}]{};
\node (a) at (3.65,1.15) [label=\tiny{5}]{};
\node (a) at (5,1) [label=\textbf{+}]{};
\node (a) at (4.65,1.15) [label=\tiny{6}]{};
 \filldraw [black] (5.5,1.3) circle (1pt);
\filldraw [black] (5.7,1.3) circle (1pt);
\filldraw [black] (5.9,1.3) circle (1pt); 
\draw[<->, red] (3,2) arc
    [
        start angle=0,
        end angle=180,
        x radius=2.5cm,
        y radius =1cm
    ] ;
\draw [->] (7,2)--(9,2);
    \draw[thick, <->] (10,0.8)--(22,0.8);
\foreach \x in {11,12,13,14,15,16,17,18,19,20,21}
    \draw[thick, -](\x cm, 0.9)--(\x cm, 0.7) node[anchor=north]{};
\node (a) at (11,-0.41) [label=$-5$]{}; 
\node (a) at (12,-0.41) [label=$-4$]{}; 
\node (a) at (13,-0.41) [label=$-3$]{}; 
\node (a) at (14,-0.41) [label=$-2$]{}; 
\node (a) at (15,-0.41) [label=$-1$]{}; 
\node (a) at (16,-0.35) [label=$0$]{}; 
\node (a) at (17,-0.35) [label=$1$]{}; 
\node (a) at (18,-0.35) [label=$2$]{}; 
\node (a) at (19,-0.35) [label=$3$]{}; 
\node (a) at (20,-0.35) [label=$4$]{}; 
\node (a) at (21,-0.35) [label=$5$]{}; 
 \filldraw [black] (10.5,1.3) circle (1pt);
\filldraw [black] (10.3,1.3) circle (1pt);
\filldraw [black] (10.1,1.3) circle (1pt);   
\node (a) at (11,1) [label=\textbf{--}]{};
\node (a) at (12,1) [label=\textbf{+}]{};
\node (a) at (11.65,1.15) [label=\tiny{1}]{};
\node (a) at (13,1) [label=\textbf{--}]{};
\node (a) at (14,1) [label=\textbf{--}]{};
\node (a) at (15,1) [label=\textbf{--}]{};
\node (a) at (16,1) [label=\textbf{--}]{};
\node (a) at (17,1) [label=\textbf{+}]{};
\node (a) at (16.65,1.15) [label=\tiny{2}]{};
\node (a) at (18,1) [label=\textbf{+}]{};
\node (a) at (17.65,1.15) [label=\tiny{3}]{};
\node (a) at (19,1) [label=\textbf{+}]{};
\node (a) at (18.65,1.15) [label=\tiny{4}]{};
\node (a) at (20,1) [label=\textbf{+}]{};
\node (a) at (19.65,1.15) [label=\tiny{5}]{};
\node (a) at (21,1) [label=\textbf{+}]{};
\node (a) at (20.65,1.15) [label=\tiny{6}]{};
 \filldraw [black] (21.5,1.3) circle (1pt);
\filldraw [black] (21.7,1.3) circle (1pt);
\filldraw [black] (21.9,1.3) circle (1pt); 
    \end{tikzpicture}
    \caption{An example of a long range spin swap in Ising between spins at sites $-2$ and  $3$.}
    \end{subfigure}
    \par \vspace{5mm}
    \begin{subfigure}[b]{\textwidth}
    \centering
    \begin{tikzpicture}[scale=0.6]
    \draw[thick, <-] (-7,-1)--(0,-1);
\foreach \x in {-6,-5,-4,-3,-2,-1}
    \draw[thick, -](\x cm, -1.1)--(\x cm, -0.9) node[anchor=north]{$\x$};

\filldraw [black] (-2,-0.5) circle (4pt);
\filldraw [black] (-2,0) circle (4pt);
\filldraw [black] (-1,-0.5) circle (4pt);
\filldraw [red] (-4,-0.5) circle (4pt);
 \filldraw [black] (-6.5,-0.7) circle (1pt);
\filldraw [black] (-6.7,-0.7) circle (1pt);
\filldraw [black] (-6.9,-0.7) circle (1pt);
\draw[red] (-2,-0.25) ellipse (0.5cm and 0.75cm);
\node(a) at (-1.5,-0.75) [label=\textcolor{red}{$\rightarrow$}]{};
\draw[->, red] (-4,0) arc
    [
        start angle=180,
        end angle=0,
        x radius=1.5cm,
        y radius =1cm
    ] ;
\draw [->] (2,-0.5)--(4,-0.5);
\draw[thick, <-] (6,-1)--(13,-1);
\foreach \x in {7,8,9,10,11,12}
    \draw[thick, -](\x cm, -1.1)--(\x cm, -0.9) node[anchor=north]{};
\node (a) at (7,-2) [label=$-6$]{};    
\node (a) at (8,-2) [label=$-5$]{}; 
\node (a) at (9,-2) [label=$-4$]{}; 
\node (a) at (10,-2) [label=$-3$]{}; 
\node (a) at (11,-2) [label=$-2$]{}; 
\node (a) at (12,-2) [label=$-1$]{}; 
\filldraw [black] (12,-0.5) circle (4pt);
\filldraw [black] (12,0) circle (4pt);
\filldraw [black] (12,0.5) circle (4pt);
\filldraw [black] (12,1) circle (4pt);
 \filldraw [black] (6.5,-0.7) circle (1pt);
\filldraw [black] (6.3,-0.7) circle (1pt);
\filldraw [black] (6.1,-0.7) circle (1pt);
    \end{tikzpicture}
        \caption{The equivalent particle jump in the stood up process}
    \end{subfigure}
    \caption{An example of a particle stack forming coming from the long range Ising dynamics.}
    \label{fig: make stack}
\end{figure}

\begin{figure}[H]
    \centering
    \begin{subfigure}[b]{\textwidth}
    \centering
    \begin{tikzpicture}[scale=0.55]
    \draw[thick, <->] (-6,0.8)--(6,0.8);
\foreach \x in {-5,-4,-3,-2,-1,0,1,2,3,4,5}
    \draw[thick, -](\x cm, 0.9)--(\x cm, 0.7) node[anchor=north]{$\x$};
 
 \filldraw [black] (-5.5,1.3) circle (1pt);
\filldraw [black] (-5.7,1.3) circle (1pt);
\filldraw [black] (-5.9,1.3) circle (1pt);   
\node (a) at (-5,1) [label=\textbf{--}]{};
\node (a) at (-4,1) [label=\textbf{+}]{};
\node (a) at (-4.35,1.15) [label=\tiny{1}]{};
\node (a) at (-3,1) [label=\textbf{+}]{};
\node (a) at (-3.35,1.15) [label=\tiny{2}]{};
\node (a) at (-2,1) [label=\textbf{+}]{};
\node (a) at (-2.35,1.15) [label=\tiny{3}]{};
\node (a) at (-1,1) [label=\textbf{--}]{};
\node (a) at (0,1) [label=\textbf{--}]{};
\node (a) at (1,1) [label=\textbf{+}]{};
\node (a) at (0.65,1.15) [label=\tiny{4}]{};
\node (a) at (2,1) [label=\textbf{--}]{};
\node (a) at (3,1) [label=\textbf{--}]{};
\node (a) at (4,1) [label=\textbf{+}]{};
\node (a) at (3.65,1.15) [label=\tiny{5}]{};
\node (a) at (5,1) [label=\textbf{+}]{};
\node (a) at (4.65,1.15) [label=\tiny{6}]{};
 \filldraw [black] (5.5,1.3) circle (1pt);
\filldraw [black] (5.7,1.3) circle (1pt);
\filldraw [black] (5.9,1.3) circle (1pt); 
\draw[<->, red] (2,2) arc
    [
        start angle=0,
        end angle=180,
        x radius=2.5cm,
        y radius =1cm
    ] ;
\draw [->] (7,2)--(9,2);
    \draw[thick, <->] (10,0.8)--(22,0.8);
\foreach \x in {11,12,13,14,15,16,17,18,19,20,21}
    \draw[thick, -](\x cm, 0.9)--(\x cm, 0.7) node[anchor=north]{};
\node (a) at (11,-0.41) [label=$-5$]{}; 
\node (a) at (12,-0.41) [label=$-4$]{}; 
\node (a) at (13,-0.41) [label=$-3$]{}; 
\node (a) at (14,-0.41) [label=$-2$]{}; 
\node (a) at (15,-0.41) [label=$-1$]{}; 
\node (a) at (16,-0.35) [label=$0$]{}; 
\node (a) at (17,-0.35) [label=$1$]{}; 
\node (a) at (18,-0.35) [label=$2$]{}; 
\node (a) at (19,-0.35) [label=$3$]{}; 
\node (a) at (20,-0.35) [label=$4$]{}; 
\node (a) at (21,-0.35) [label=$5$]{}; 
 \filldraw [black] (10.5,1.3) circle (1pt);
\filldraw [black] (10.3,1.3) circle (1pt);
\filldraw [black] (10.1,1.3) circle (1pt);   
\node (a) at (11,1) [label=\textbf{--}]{};
\node (a) at (13,1) [label=\textbf{--}]{};
\node (a) at (12,1) [label=\textbf{+}]{};
\node (a) at (11.65,1.15) [label=\tiny{1}]{};
\node (a) at (14,1) [label=\textbf{+}]{};
\node (a) at (13.65,1.15) [label=\tiny{2}]{};
\node (a) at (15,1) [label=\textbf{--}]{};
\node (a) at (16,1) [label=\textbf{--}]{};
\node (a) at (17,1) [label=\textbf{+}]{};
\node (a) at (16.65,1.15) [label=\tiny{3}]{};
\node (a) at (18,1) [label=\textbf{+}]{};
\node (a) at (17.65,1.15) [label=\tiny{4}]{};
\node (a) at (19,1) [label=\textbf{--}]{};
\node (a) at (20,1) [label=\textbf{+}]{};
\node (a) at (19.65,1.15) [label=\tiny{5}]{};
\node (a) at (21,1) [label=\textbf{+}]{};
\node (a) at (20.65,1.15) [label=\tiny{6}]{};
 \filldraw [black] (21.5,1.3) circle (1pt);
\filldraw [black] (21.7,1.3) circle (1pt);
\filldraw [black] (21.9,1.3) circle (1pt); 
    \end{tikzpicture}
    \caption{An example of a long range spin swap in Ising between spins at sites $-3$ and  $2$.}
    \end{subfigure}
    \par\vspace{5mm}
    \begin{subfigure}[b]{\textwidth}
    \centering
    \begin{tikzpicture}[scale=0.6]
    \draw[thick, <-] (-7,-1)--(0,-1);
\foreach \x in {-6,-5,-4,-3,-2,-1}
    \draw[thick, -](\x cm, -1.1)--(\x cm, -0.9) node[anchor=north]{$\x$};

\filldraw [black] (-3,-0.5) circle (4pt);
\filldraw [black] (-3,0) circle (4pt);
\filldraw [red] (-4,-0.5) circle (4pt);
\filldraw [black] (-4,0) circle (4pt);
 \filldraw [black] (-6.5,-0.7) circle (1pt);
\filldraw [black] (-6.7,-0.7) circle (1pt);
\filldraw [black] (-6.9,-0.7) circle (1pt);
\draw[red] (-3,-0.25) ellipse (0.5cm and 0.75cm);
\node(a) at (-2.5,-0.75) [label=\textcolor{red}{$\rightarrow$}]{};
\draw[->, red] (-4,0.5) arc
    [
        start angle=180,
        end angle=0,
        x radius=1.5cm,
        y radius =1cm
    ] ;
\draw [->] (2,-0.5)--(4,-0.5);
\draw[thick, <-] (6,-1)--(13,-1);
\foreach \x in {7,8,9,10,11,12}
    \draw[thick, -](\x cm, -1.1)--(\x cm, -0.9) node[anchor=north]{};
\node (a) at (7,-2) [label=$-6$]{};    
\node (a) at (8,-2) [label=$-5$]{}; 
\node (a) at (9,-2) [label=$-4$]{}; 
\node (a) at (10,-2) [label=$-3$]{}; 
\node (a) at (11,-2) [label=$-2$]{}; 
\node (a) at (12,-2) [label=$-1$]{}; 
\filldraw [black] (12,-0.5) circle (4pt);
\filldraw [black] (11,-0.5) circle (4pt);
\filldraw [black] (11,-0) circle (4pt);
\filldraw [black] (9,-0.5) circle (4pt);
 \filldraw [black] (6.5,-0.7) circle (1pt);
\filldraw [black] (6.3,-0.7) circle (1pt);
\filldraw [black] (6.1,-0.7) circle (1pt);
    \end{tikzpicture}
        \caption{The equivalent particle jump in the stood up process}
    \end{subfigure}
    \caption{An example of the particle dynamics coming from the long range Ising dynamics.}
    \label{fig:long dynamics}
\end{figure}
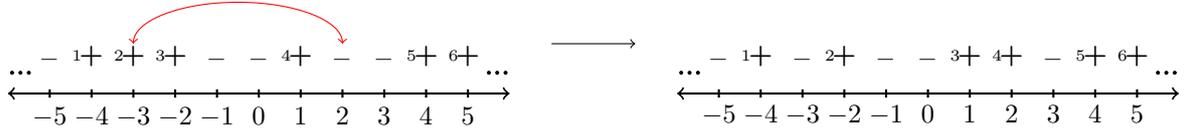
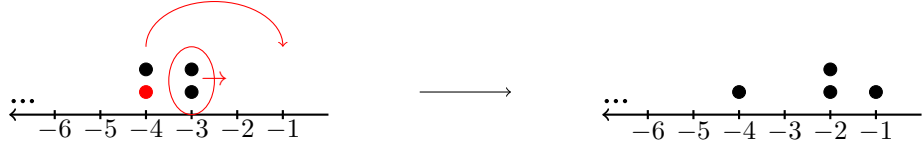

\par Now we will describe the boundary dynamics. First let us consider particles that jump out of the system through the boundary to the right of site $-1$. Just like in the bulk, the $m^\text{th}$ particle at site $-i$ for $i\geq 1$ might jump $i$ sites to the right, meaning it leaves the system through the boundary. When it does this all the particles to its right shift one site to the right and any particles at site $-1$ also leave the system through the boundary. This boundary dynamic is equivalent to the spin swap in the corresponding Ising configuration between the first positive spin and the $m^\text{th}$ negative spin to the right of the $i^\text{th}$ positive spin (note this will be to the left of the $(i+1)^\text{th}$ positive spin). For an example of this and its corresponding Ising spin swap see Figure \ref{fig: system to boundary}.
\par Let $\omega$ denote the state of the system and $\omega'$ be the state after the $m^\text{st}$ particle at site $-i$ has jumped into the boundary; such a boundary jump happens with the following rate,
\begin{multline*}
w^{(n)}(\omega,\omega')=\frac{1}{2}q^{-\sum\limits_{j=1}^{i-1}(\omega_{-j}+1)-m}\Bigg\{1-\tanh \Bigg[\frac{\beta}{2}\sum\limits_{a\in\mathbb{Z}}\bigg(J\Big(a,n+i+m-\sum\limits_{s=i}^\infty\omega_{-s}\Big)-J\Big(a,n+1-\sum\limits_{s=1}^\infty \omega_{-s}\Big)\bigg)\\+\beta\sum\limits_{r=1}^\infty\bigg(J\Big(n+r-\sum\limits_{s=r}^\infty\omega_{-s},n+1-\sum\limits_{s=1}^\infty \omega_{-s}\Big)-J\Big(n+r-\sum\limits_{s=r}^\infty\omega_{-s},n+i+m-\sum\limits_{s=i}^\infty\omega_{-s}\Big)\bigg)\Bigg]\Bigg\}.
\end{multline*}
Here we used that in the corresponding Ising configuration the first +1 spin is at site $S^{(n)}_1=n+1-\sum\limits_{s=1}^\infty \omega_{-s}$.
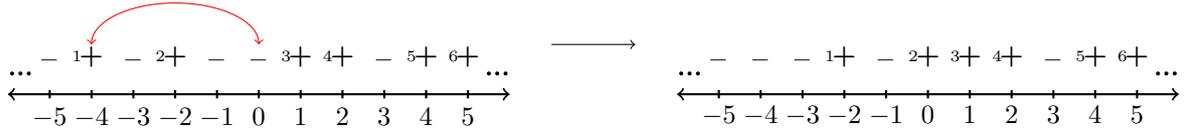
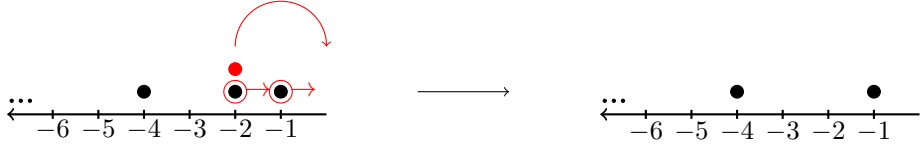
\begin{figure}[H]
    \centering
    \begin{subfigure}[b]{\textwidth}
    \centering
    \begin{tikzpicture}[scale=0.55]
    \draw[thick, <->] (-6,0.8)--(6,0.8);
\foreach \x in {-5,-4,-3,-2,-1,0,1,2,3,4,5}
    \draw[thick, -](\x cm, 0.9)--(\x cm, 0.7) node[anchor=north]{$\x$};
 
 \filldraw [black] (-5.5,1.3) circle (1pt);
\filldraw [black] (-5.7,1.3) circle (1pt);
\filldraw [black] (-5.9,1.3) circle (1pt);   
\node (a) at (-5,1) [label=\textbf{--}]{};
\node (a) at (-4,1) [label=\textbf{+}]{};
\node (a) at (-4.35,1.15) [label=\tiny{1}]{};
\node (a) at (-3,1) [label=\textbf{--}]{};
\node (a) at (-2,1) [label=\textbf{+}]{};
\node (a) at (-2.35,1.15) [label=\tiny{2}]{};
\node (a) at (-1,1) [label=\textbf{--}]{};
\node (a) at (0,1) [label=\textbf{--}]{};
\node (a) at (1,1) [label=\textbf{+}]{};
\node (a) at (0.65,1.15) [label=\tiny{3}]{};
\node (a) at (2,1) [label=\textbf{+}]{};
\node (a) at (1.65,1.15) [label=\tiny{4}]{};
\node (a) at (3,1) [label=\textbf{--}]{};
\node (a) at (4,1) [label=\textbf{+}]{};
\node (a) at (3.65,1.15) [label=\tiny{5}]{};
\node (a) at (5,1) [label=\textbf{+}]{};
\node (a) at (4.65,1.15) [label=\tiny{6}]{};
 \filldraw [black] (5.5,1.3) circle (1pt);
\filldraw [black] (5.7,1.3) circle (1pt);
\filldraw [black] (5.9,1.3) circle (1pt); 
\draw[<->, red] (0,2) arc
    [
        start angle=0,
        end angle=180,
        x radius=2cm,
        y radius =1cm
    ] ;
\draw [->] (7,2)--(9,2);
    \draw[thick, <->] (10,0.8)--(22,0.8);
\foreach \x in {11,12,13,14,15,16,17,18,19,20,21}
    \draw[thick, -](\x cm, 0.9)--(\x cm, 0.7) node[anchor=north]{};
\node (a) at (11,-0.41) [label=$-5$]{}; 
\node (a) at (12,-0.41) [label=$-4$]{}; 
\node (a) at (13,-0.41) [label=$-3$]{}; 
\node (a) at (14,-0.41) [label=$-2$]{}; 
\node (a) at (15,-0.41) [label=$-1$]{}; 
\node (a) at (16,-0.35) [label=$0$]{}; 
\node (a) at (17,-0.35) [label=$1$]{}; 
\node (a) at (18,-0.35) [label=$2$]{}; 
\node (a) at (19,-0.35) [label=$3$]{}; 
\node (a) at (20,-0.35) [label=$4$]{}; 
\node (a) at (21,-0.35) [label=$5$]{}; 
 \filldraw [black] (10.5,1.3) circle (1pt);
\filldraw [black] (10.3,1.3) circle (1pt);
\filldraw [black] (10.1,1.3) circle (1pt);   
\node (a) at (11,1) [label=\textbf{--}]{};
\node (a) at (12,1) [label=\textbf{--}]{};
\node (a) at (13,1) [label=\textbf{--}]{};
\node (a) at (14,1) [label=\textbf{+}]{};
\node (a) at (13.65,1.15) [label=\tiny{1}]{};
\node (a) at (15,1) [label=\textbf{--}]{};
\node (a) at (16,1) [label=\textbf{+}]{};
\node (a) at (15.65,1.15) [label=\tiny{2}]{};
\node (a) at (17,1) [label=\textbf{+}]{};
\node (a) at (16.65,1.15) [label=\tiny{3}]{};
\node (a) at (18,1) [label=\textbf{+}]{};
\node (a) at (17.65,1.15) [label=\tiny{4}]{};
\node (a) at (19,1) [label=\textbf{--}]{};
\node (a) at (20,1) [label=\textbf{+}]{};
\node (a) at (19.65,1.15) [label=\tiny{5}]{};
\node (a) at (21,1) [label=\textbf{+}]{};
\node (a) at (20.65,1.15) [label=\tiny{6}]{};
 \filldraw [black] (21.5,1.3) circle (1pt);
\filldraw [black] (21.7,1.3) circle (1pt);
\filldraw [black] (21.9,1.3) circle (1pt); 
    \end{tikzpicture}
    \caption{An example of a long range spin swap in Ising between spins at sites $-4$ and  $0$.}
    \end{subfigure}
    \par \hspace{5mm}
    \begin{subfigure}[b]{\textwidth}
    \centering
    \begin{tikzpicture}[scale=0.6]
    \draw[thick, <-] (-7,-1)--(0,-1);
\foreach \x in {-6,-5,-4,-3,-2,-1}
    \draw[thick, -](\x cm, -1.1)--(\x cm, -0.9) node[anchor=north]{$\x$};

\filldraw [black] (-2,-0.5) circle (4pt);
\filldraw [red] (-2,0) circle (4pt);
\filldraw [black] (-1,-0.5) circle (4pt);
\filldraw [black] (-4,-0.5) circle (4pt);
 \filldraw [black] (-6.5,-0.7) circle (1pt);
\filldraw [black] (-6.7,-0.7) circle (1pt);
\filldraw [black] (-6.9,-0.7) circle (1pt);
\draw[red] (-2,-0.5) ellipse (0.25cm and 0.25cm);
\node(a) at (-1.5,-1) [label=\textcolor{red}{$\rightarrow$}]{};
\draw[red] (-1,-0.5) ellipse (0.25cm and 0.25cm);
\node(a) at (-0.5,-1) [label=\textcolor{red}{$\rightarrow$}]{};
\draw[->, red] (-2,0.5) arc
    [
        start angle=180,
        end angle=0,
        x radius=1cm,
        y radius =1cm
    ] ;

\draw [->] (2,-0.5)--(4,-0.5);
\draw[thick, <-] (6,-1)--(13,-1);
\foreach \x in {7,8,9,10,11,12}
    \draw[thick, -](\x cm, -1.1)--(\x cm, -0.9) node[anchor=north]{};
\node (a) at (7,-2) [label=$-6$]{};    
\node (a) at (8,-2) [label=$-5$]{}; 
\node (a) at (9,-2) [label=$-4$]{}; 
\node (a) at (10,-2) [label=$-3$]{}; 
\node (a) at (11,-2) [label=$-2$]{}; 
\node (a) at (12,-2) [label=$-1$]{};
\filldraw [black] (12,-0.5) circle (4pt);
\filldraw [black] (9,-0.5) circle (4pt);
 \filldraw [black] (6.5,-0.7) circle (1pt);
\filldraw [black] (6.3,-0.7) circle (1pt);
\filldraw [black] (6.1,-0.7) circle (1pt);
    \end{tikzpicture}
        \caption{The equivalent particle jump in the stood up process}
    \end{subfigure}
    \caption{An example of a particle hop into the boundary coming from the long range Ising dynamics.}
    \label{fig: system to boundary}
\end{figure}

\par Particles can also enter the system from an infinite well of particles in the boundary. A particle enters the system and jumps to site $-i$, then all the particles to the right of this site move one site to the left. Any particles that were at site $-i+1$ join to form a stack at site $-i$. As this left shift happens some number of particles can be brought in from the boundary to site $-1$. This boundary dynamic is equivalent to a spin swap in the long range Ising model where the $i^\text{th}$ positive spin swaps with a negative spin that is to the left of the first positive spin. The number of particles that enter to site $-1$ is equal to the number of negative spins between the new first and second positive spins after this spin swap. For an example of this and its corresponding Ising spin swap see Figure \ref{fig:boundary to system}.
\par Let $\omega$ denote the state of the system and $\omega'$ the state reached after a particle from the boundary jumped to site $-i$ and brought in another $m-1$ particles with it. Such a boundary jump happens with the following rate,
\begin{multline*}
w^{(n)}(\omega,\omega')=\frac{1}{2}q^{\sum\limits_{j=1}^{i-1}(\omega_{-j}+1)+m}\Bigg\{1-\tanh \Bigg[\frac{\beta}{2}\sum\limits_{a\in\mathbb{Z}}\bigg(J\Big(a,n+1-m-\sum\limits_{s=1}^\infty\omega_{-s}\Big)-J\Big(a,n+i-\sum\limits_{s=i}^\infty \omega_{-s}\Big)\bigg)\\+\beta\sum\limits_{r=1}^\infty\bigg(J\Big(n+r-\sum\limits_{s=r}^\infty\omega_{-s},n+i-\sum\limits_{s=i}^\infty \omega_{-s}\Big)-J\Big(n+r-\sum\limits_{s=r}^\infty\omega_{-s},n+1-m-\sum\limits_{s=1}^\infty\omega_{-s}\Big)\bigg)\Bigg]\Bigg\}.
\end{multline*}
Again we used that in the corresponding Ising configuration the first positive spin is at site $S^{(n)}_1=n+1-\sum\limits_{s=1}^\infty \omega_{-s}$. 

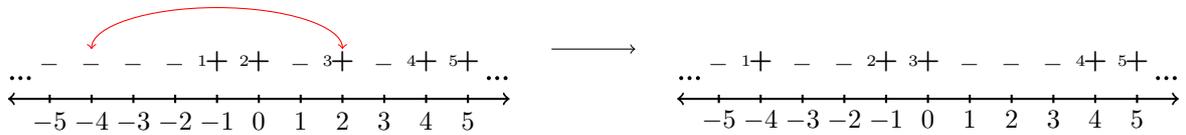
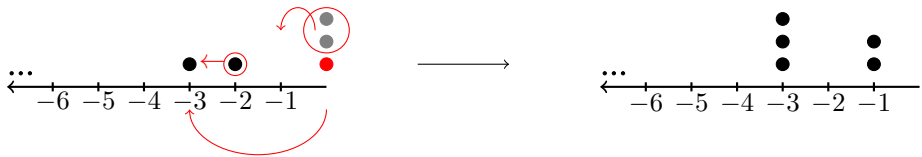
\begin{figure}[H]
    \centering
    \begin{subfigure}[b]{\textwidth}
    \centering
    \begin{tikzpicture}[scale=0.55]
    \draw[thick, <->] (-6,0.8)--(6,0.8);
\foreach \x in {-5,-4,-3,-2,-1,0,1,2,3,4,5}
    \draw[thick, -](\x cm, 0.9)--(\x cm, 0.7) node[anchor=north]{$\x$};
 
 \filldraw [black] (-5.5,1.3) circle (1pt);
\filldraw [black] (-5.7,1.3) circle (1pt);
\filldraw [black] (-5.9,1.3) circle (1pt);   
\node (a) at (-5,1) [label=\textbf{--}]{};
\node (a) at (-4,1) [label=\textbf{--}]{};
\node (a) at (-1.35,1.15) [label=\tiny{1}]{};
\node (a) at (-3,1) [label=\textbf{--}]{};
\node (a) at (-2,1) [label=\textbf{--}]{};
\node (a) at (-0.35,1.15) [label=\tiny{2}]{};
\node (a) at (-1,1) [label=\textbf{+}]{};
\node (a) at (0,1) [label=\textbf{+}]{};
\node (a) at (1,1) [label=\textbf{--}]{};
\node (a) at (2,1) [label=\textbf{+}]{};
\node (a) at (3,1) [label=\textbf{--}]{};
\node (a) at (1.65,1.15) [label=\tiny{3}]{};
\node (a) at (4,1) [label=\textbf{+}]{};
\node (a) at (3.65,1.15) [label=\tiny{4}]{};
\node (a) at (5,1) [label=\textbf{+}]{};
\node (a) at (4.65,1.15) [label=\tiny{5}]{};
 \filldraw [black] (5.5,1.3) circle (1pt);
\filldraw [black] (5.7,1.3) circle (1pt);
\filldraw [black] (5.9,1.3) circle (1pt); 
\draw[<->, red] (2,2) arc
    [
        start angle=0,
        end angle=180,
        x radius=3cm,
        y radius =1cm
    ] ;
\draw [->] (7,2)--(9,2);
    \draw[thick, <->] (10,0.8)--(22,0.8);
\foreach \x in {11,12,13,14,15,16,17,18,19,20,21}
    \draw[thick, -](\x cm, 0.9)--(\x cm, 0.7) node[anchor=north]{};
\node (a) at (11,-0.41) [label=$-5$]{}; 
\node (a) at (12,-0.41) [label=$-4$]{}; 
\node (a) at (13,-0.41) [label=$-3$]{}; 
\node (a) at (14,-0.41) [label=$-2$]{}; 
\node (a) at (15,-0.41) [label=$-1$]{}; 
\node (a) at (16,-0.35) [label=$0$]{}; 
\node (a) at (17,-0.35) [label=$1$]{}; 
\node (a) at (18,-0.35) [label=$2$]{}; 
\node (a) at (19,-0.35) [label=$3$]{}; 
\node (a) at (20,-0.35) [label=$4$]{}; 
\node (a) at (21,-0.35) [label=$5$]{}; 
 \filldraw [black] (10.5,1.3) circle (1pt);
\filldraw [black] (10.3,1.3) circle (1pt);
\filldraw [black] (10.1,1.3) circle (1pt);   
\node (a) at (11,1) [label=\textbf{--}]{};
\node (a) at (13,1) [label=\textbf{--}]{};
\node (a) at (12,1) [label=\textbf{+}]{};
\node (a) at (11.65,1.15) [label=\tiny{1}]{};
\node (a) at (14,1) [label=\textbf{--}]{};
\node (a) at (15,1) [label=\textbf{+}]{};
\node (a) at (16,1) [label=\textbf{+}]{};
\node (a) at (17,1) [label=\textbf{--}]{};
\node (a) at (14.65,1.15) [label=\tiny{2}]{};
\node (a) at (18,1) [label=\textbf{--}]{};
\node (a) at (15.65,1.15) [label=\tiny{3}]{};
\node (a) at (19,1) [label=\textbf{--}]{};
\node (a) at (20,1) [label=\textbf{+}]{};
\node (a) at (19.65,1.15) [label=\tiny{4}]{};
\node (a) at (21,1) [label=\textbf{+}]{};
\node (a) at (20.65,1.15) [label=\tiny{5}]{};
 \filldraw [black] (21.5,1.3) circle (1pt);
\filldraw [black] (21.7,1.3) circle (1pt);
\filldraw [black] (21.9,1.3) circle (1pt); 
    \end{tikzpicture}
    \caption{An example of a long range spin swap in Ising between spins at sites $-4$ and  $2$.}
    \end{subfigure}
    \par\vspace{5mm}
    \begin{subfigure}[b]{\textwidth}
    \centering
    \begin{tikzpicture}[scale=0.6]
    \draw[thick, <-] (-7,-1)--(0,-1);
\foreach \x in {-6,-5,-4,-3,-2,-1}
    \draw[thick, -](\x cm, -1.1)--(\x cm, -0.9) node[anchor=north]{$\x$};

\filldraw [red] (0,-0.5) circle (4pt);
\filldraw [gray] (0,0) circle (4pt);
\filldraw [gray] (0,0.5) circle (4pt);
\filldraw [black] (-2,-0.5) circle (4pt);
\filldraw [black] (-3,-0.5) circle (4pt);
 \filldraw [black] (-6.5,-0.7) circle (1pt);
\filldraw [black] (-6.7,-0.7) circle (1pt);
\filldraw [black] (-6.9,-0.7) circle (1pt);
\draw[red] (-2,-0.5) ellipse (0.25cm and 0.25cm);
\node(a) at (-2.5,-1) [label=\textcolor{red}{$\leftarrow$}]{};
\draw[->, red] (0,-1.5) arc
    [
        start angle=360,
        end angle=180,
        x radius=1.5cm,
        y radius =1cm
    ] ;
\draw[red] (0,0.25) ellipse (0.5cm and 0.5cm);
\draw[->, red] (-0.25,0.25) arc
    [
        start angle=0,
        end angle=180,
        x radius=0.375cm,
        y radius =0.5cm
    ] ;
\draw [->] (2,-0.5)--(4,-0.5);
\draw[thick, <-] (6,-1)--(13,-1);
\foreach \x in {7,8,9,10,11,12}
    \draw[thick, -](\x cm, -1.1)--(\x cm, -0.9) node[anchor=north]{};
\node (a) at (7,-2) [label=$-6$]{};    
\node (a) at (8,-2) [label=$-5$]{}; 
\node (a) at (9,-2) [label=$-4$]{}; 
\node (a) at (10,-2) [label=$-3$]{}; 
\node (a) at (11,-2) [label=$-2$]{}; 
\node (a) at (12,-2) [label=$-1$]{}; 
\filldraw [black] (12,-0.5) circle (4pt);
\filldraw [black] (12,0) circle (4pt);
\filldraw [black] (10,0) circle (4pt);
\filldraw [black] (10,-0.5) circle (4pt);
\filldraw [black] (10,0.5) circle (4pt);
 \filldraw [black] (6.5,-0.7) circle (1pt);
\filldraw [black] (6.3,-0.7) circle (1pt);
\filldraw [black] (6.1,-0.7) circle (1pt);
    \end{tikzpicture}
        \caption{The equivalent particle jump in the stood up process}
    \end{subfigure}
    \caption{An example of particles entering from the boundary coming from the long range Ising dynamics.}
    \label{fig:boundary to system}
\end{figure}
\begin{rem}
   We note that when the target site of a boundary jump is $-1$, this can be seen as some number, $m$, of particles jumping in from the boundary to site $-1$. This boundary dynamic is equivalent to the first positive spin in the corresponding Ising state jumping $m$ sites to the left. 
   \par Let $\omega$ denote the starting state of the system and $\omega'$ the state reached after $m$ particles enter from the boundary all to site $-1$; such a boundary jump happens at rate,
    \begin{multline*}
w^{(n)}(\omega,\omega')=\frac{q^{m}}{2}\Bigg\{1-\tanh \Bigg[\frac{\beta}{2}\sum\limits_{a\in\mathbb{Z}}\bigg(J\Big(a,n+1-m-\sum\limits_{s=1}^\infty\omega_{-s}\Big)-J\Big(a,n+1-\sum\limits_{s=1}^\infty \omega_{-s}\Big)\bigg)\\+\beta\sum\limits_{r=1}^\infty\bigg(J\Big(n+r-\sum\limits_{s=r}^\infty\omega_{-s},n+1-\sum\limits_{s=1}^\infty \omega_{-s}\Big)-J\Big(n+r-\sum\limits_{s=r}^\infty\omega_{-s},n+1-m-\sum\limits_{s=1}^\infty\omega_{-s}\Big)\bigg)\Bigg]\Bigg\}.
\end{multline*}
\end{rem}

\begin{prop}\label{prop: meas of long range stood up}
    For the process on $\Omega$ described by the jump rates $w^{(n)}(\cdot,\cdot)$ the following measure is stationary and reversible, 
    $$\pi^{(n)}(\omega)=\nu^n((T^{n})^{-1}(\omega)).$$
\end{prop}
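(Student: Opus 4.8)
The plan is to observe that the particle dynamics $w^{(n)}(\cdot,\cdot)$ on $\Omega$ were \emph{constructed} to be the image under the stand up bijection $T^n:\mathcal{B}_n\to\Omega$ (Definition \ref{def: stand up}, Lemma \ref{lem: T^n bijection}) of the long range Kawasaki dynamics on $\mathcal{B}_n$, so that the asserted reversibility of $\pi^{(n)}=(T^n)_\ast\nu^n_{J,\beta,q}$ is simply inherited from the reversibility of $\mu^c_J$ via the standard principle that pushing a reversible measure through a bijection intertwining the generators preserves detailed balance. Concretely, since $T^n$ is a bijection it induces a bijection between probability measures on $\mathcal{B}_n$ and on $\Omega$ and between rate kernels, and by definition $\pi^{(n)}(\omega)=\nu^n_{J,\beta,q}\big((T^n)^{-1}(\omega)\big)$.

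The first substantive step is to make the intertwining precise: for $\sigma\in\mathcal{B}_n$ and $\omega=T^n(\sigma)$, one must check that every allowed particle transition $\omega\to\omega'$ — a bulk hop, a hop out through the boundary, a hop in from the boundary, and the target-site-$(-1)$ special case of the remark — corresponds to exactly one exchange $\sigma\to\sigma'$ of a $+1$ spin at some site $k$ with a $-1$ spin at some site $\ell$, with $\sigma'=(T^n)^{-1}(\omega')\in\mathcal{B}_n$, and conversely that every such spin exchange arises in this way; moreover that the construction of $w^{(n)}$ from $w$ — the identification of $k,\ell$ as positions $S^{(n)}_r$ of positive spins resp. negative spins in runs, together with the identity $d(\omega,\omega')=|k-\ell|$ carrying the correct sign of the $q$-exponent — yields $w^{(n)}(\omega,\omega')=w(\sigma,\sigma')$. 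This is casework on the location of the hopping particle within its stack and on whether the source and target sites are in the bulk or at the boundary, and is exactly what the figures and rate formulas preceding the statement are set up to record.

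Granting this, the conclusion is short. The quantity $N$ is conserved by the Kawasaki dynamics, so the dynamics restrict to $\mathcal{B}_n$ and $\nu^n_{J,\beta,q}=\mu^c_J(\,\cdot\mid N=n)$ is a well-defined probability measure on $\mathcal{B}_n$; by Lemma \ref{lem:longrangeconcentration}, $\mu^c_J$ satisfies detailed balance $\mu^c_J(\sigma)w(\sigma,\sigma')=\mu^c_J(\sigma')w(\sigma',\sigma)$, and dividing by the constant $\mu^c_J(\{N=n\})$ (legitimate because $w(\sigma,\sigma')>0$ and $\sigma\in\mathcal{B}_n$ force $\sigma'\in\mathcal{B}_n$) gives detailed balance for $\nu^n_{J,\beta,q}$ on $\mathcal{B}_n$. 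Transporting along $T^n$ and using the rate identity of the previous step,
\[
\pi^{(n)}(\omega)\,w^{(n)}(\omega,\omega')=\nu^n_{J,\beta,q}(\sigma)\,w(\sigma,\sigma')=\nu^n_{J,\beta,q}(\sigma')\,w(\sigma',\sigma)=\pi^{(n)}(\omega')\,w^{(n)}(\omega',\omega),
\]
so $\pi^{(n)}$ satisfies detailed balance for $w^{(n)}$, hence is reversible and a fortiori stationary; it is a probability measure because $\nu^n_{J,\beta,q}$ is one and $T^n$ is a bijection.

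The main obstacle is the middle paragraph: verifying, without double counting and with the correct $q^{\pm}$ signs, that the three families of particle moves (with their distance exponents and with the $\tanh$ arguments rewritten through $S^{(n)}_r(\omega)$) genuinely exhaust and exactly match all spin exchanges permitted by the Ising dynamics on $\mathcal{B}_n$. Everything after that is formal.
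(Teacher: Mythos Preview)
Your proposal is correct and follows exactly the same approach as the paper, which simply asserts in one sentence that the bijection $T^n$ gives an equivalence of reversible stationary measures for the two processes. Your write-up is a careful unpacking of that one-liner --- the intertwining of rates under $T^n$ and the transport of detailed balance --- and identifies the only substantive content (the casework matching particle moves to spin exchanges) that the paper leaves implicit in its construction of the rates.
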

\begin{proof}
    This holds since the bijection $T^n$ gives an equivalence of reversible stationary measures for the two processes.
\end{proof}
An explicit expression for $\pi^{(n)}(\omega)$ is extremely cumbersome, but we note that after some calculation we find
\begin{equation}\label{eq:fcparticle}
\begin{aligned}
&f_c\big((T^n)^{-1}(\omega)\big) = \sum_{r\geq 0}\bigg[\1_{\{\sum_{j\geq r+1}\omega_{-j}\leq n+r-1\}}\Big(n+r-\sum_{j\geq r+1}\omega_{-j}+1-\big[1\vee(n+r+1-\sum_{j\geq r}\omega_{-j})\big]\Big)
\\
&\Big(n+r-\sum_{j\geq r+1}\omega_{-j}+\big[1\vee(n+r+1-\sum_{j\geq r}\omega_{-j})\big]-2c\Big)-\1_{\{\sum_{j\geq r}\omega_{-j}\geq n+r\}}2\Big(n+r-\sum_{j\geq r}\omega_{-j}-c\Big)\bigg]
\\
&\qquad\qquad\qquad\,\,=:\sum_{r\geq 0}g_c^{(r)}(\omega)
\end{aligned}
\end{equation}
so that, due to the terms of the form $\omega_{-r}\omega_{-j}$ and the indicators with conditions on partial sums of $\omega_{-j}$'s, $\pi^{(n)}(\omega)$ cannot be written as a product of independent variables on each site.

\subsection{Restricted long-range dynamics}\label{sec:restricted dynamics}

We can also restrict the dynamics to a case that is simpler to describe and, perhaps, more physically relevant. First we consider the corresponding dynamics for the long-range Ising model. Suppose that a negative spin at $j\in \Z$ may swap with either of the positive spins at the ends of the run of consecutive negative spins that contains $j$. Suppose these positive spins are at sites $k$ and $\ell$. If the negative spin at $j$ hops, then it exchanges with the positive spin at $k$ with probability proportional to the probability this swap would occur in the unrestricted dynamics, given the negative spin at $j$ will make a swap, and exchanges with the positive spin at $\ell$ with the complementary probability.
In the particle system, this corresponds to stacks of particles splitting and their top halves moving to a neighbouring site, merging with the (possibly empty) stack of particles there (see Figure \ref{fig:stacks_merging} for an example). These dynamics are not quite an extension of the zero-range model to allow multiple particles to move at the same time due to the rates of the dynamics having a dependence on the configuration after a proposed step is made.
\begin{prop}
    For the long range Ising process with the restricted dynamics described above, the measure $\mu^c_{J,\beta,q}$ given in \eqref{eq: long range mu}, is still reversible and stationary. Similarly, for the ``stood up" process with these restricted dynamics the measure $\pi^{(n)}$ given in Proposition \ref{prop: meas of long range stood up}, is still reversible and stationary.
\end{prop}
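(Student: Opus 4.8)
The plan is to show that the restricted dynamics are nothing more than the full long-range Kawasaki dynamics of the previous subsection with every non-permitted transition deleted and the rate of every permitted transition left unchanged; reversibility of $\mu^c_{J,\beta,q}$ will then be inherited from the detailed balance established in Lemma \ref{lem:longrangeconcentration}, and the particle-system statement will follow by transporting reversibility along the bijection $T^n$.

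First I would unpack the rate of a permitted move. Write $\sigma^{jk}$ for the configuration obtained from $\sigma\in\Bcal$ by exchanging the spins at sites $j$ and $k$. Fix $\sigma$ and a site $j$ with $\sigma_j=-1$, and let $k,\ell$ be the positive spins at the two ends of the maximal run of negative spins of $\sigma$ containing $j$ (when that run is infinite on one side only one such site exists, and the prescription degenerates accordingly). With the natural normalisation of the verbal description, the negative spin at $j$ attempts a swap at total rate $w(\sigma,\sigma^{jk})+w(\sigma,\sigma^{j\ell})$ and, conditionally on attempting one, exchanges with $k$ with probability $w(\sigma,\sigma^{jk})\big/\bigl(w(\sigma,\sigma^{jk})+w(\sigma,\sigma^{j\ell})\bigr)$ and with $\ell$ with the complementary probability; multiplying, the rate of the permitted move $\sigma\to\sigma^{jk}$ is exactly $w(\sigma,\sigma^{jk})$, the rate it already carried in the unrestricted chain, while all non-permitted rates out of $\sigma$ are $0$. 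Since the restricted rates are thus dominated by the unrestricted ones, the restricted process is well defined whenever the full one is.

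Next I would observe that the relation ``$\sigma'$ is obtained from $\sigma$ by one permitted move'' is symmetric. If $\sigma'=\sigma^{jk}$ with $k$ a positive endpoint of the negative run of $\sigma$ through $j$, then in $\sigma'$ the spin at $k$ is negative, the spin at $j$ is positive, and every spin strictly between $k$ and $j$ is still negative, so $j$ is a positive endpoint of the maximal negative run of $\sigma'$ containing $k$; hence $\sigma'\to\sigma$ is permitted as well, carrying rate $w(\sigma',\sigma)$. Writing $w^{\mathrm{res}}$ for the restricted rates, it follows that for every ordered pair $(\sigma,\sigma')$
\begin{equation*}
\mu^c_{J,\beta,q}(\sigma)\,w^{\mathrm{res}}(\sigma,\sigma')=\mu^c_{J,\beta,q}(\sigma')\,w^{\mathrm{res}}(\sigma',\sigma),
\end{equation*}
both sides vanishing unless the move is permitted, in which case the identity is exactly the detailed balance relation $\mu^c_{J,\beta,q}(\sigma)w(\sigma,\sigma')=\mu^c_{J,\beta,q}(\sigma')w(\sigma',\sigma)$ for the full dynamics supplied by Lemma \ref{lem:longrangeconcentration}. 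This gives reversibility, and hence stationarity, of $\mu^c_{J,\beta,q}$ for the restricted long-range Ising dynamics.

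For the particle system I would note that a spin exchange does not change $N(\sigma)$, so the restricted dynamics act within each $\Bcal^{n}$ and the conditioned measure $\nu^n_{J,\beta,q}=\mu^c_{J,\beta,q}(\,\cdot\mid N=n)$ is reversible for them on $\Bcal^{n}$; since $T^n\colon\Bcal^{n}\to\Omega$ is a bijection (Definition \ref{def: stand up}, Lemma \ref{lem: T^n bijection}), pushing the restricted generator forward along $T^n$ produces precisely the restricted particle dynamics described above — the boundary moves being, in the Ising picture, permitted endpoint swaps involving the leftmost positive spin — and the push-forward of a reversible measure is reversible, so $\pi^{(n)}=\nu^n_{J,\beta,q}\circ(T^n)^{-1}$, the measure of Proposition \ref{prop: meas of long range stood up}, is reversible and stationary for the restricted ``stood up'' process. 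The main obstacle is the bookkeeping in the two middle paragraphs — confirming that the verbal prescription really reproduces $w(\sigma,\sigma')$ on permitted pairs and that the permitted-move relation is symmetric; once that is in hand the rest is the routine principle that restricting a reversible chain to a symmetric set of edges, and then transporting it along a bijection, preserves reversibility.
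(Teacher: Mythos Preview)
Your proposal is correct and follows essentially the same approach as the paper: the paper's one-line proof simply invokes the general principle that removing transitions from a reversible Markov chain (symmetrically) preserves reversibility, citing Liggett \cite{liggett_book} II.~Proposition~5.10. You have spelled out the ingredients of that principle in this context --- that the permitted rates coincide with the unrestricted ones, that the permitted-move relation is symmetric, and that reversibility then transports along $T^n$ --- which is exactly the content behind the paper's citation.
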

\begin{proof}
    This holds since for a reversible Markov chain with some transitions removed independently, the ``cut" chain is also reversible with the same stationary measure (see for example Liggett \cite{liggett_book} II. Proposition 5.10).
\end{proof}
\newpage 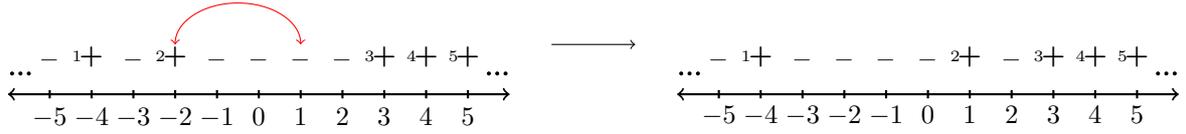
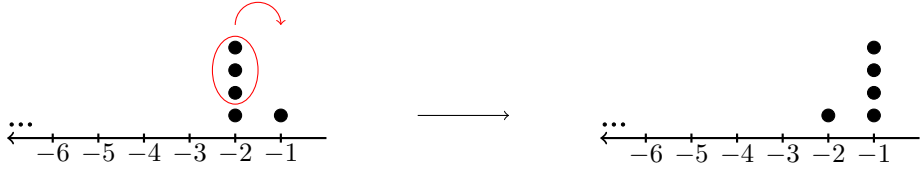
\begin{figure}[H]
    \centering
    \begin{subfigure}[b]{\textwidth}
    \centering
    \begin{tikzpicture}[scale=0.55]
    \draw[thick, <->] (-6,0.8)--(6,0.8);
\foreach \x in {-5,-4,-3,-2,-1,0,1,2,3,4,5}
    \draw[thick, -](\x cm, 0.9)--(\x cm, 0.7) node[anchor=north]{$\x$};
 
 \filldraw [black] (-5.5,1.3) circle (1pt);
\filldraw [black] (-5.7,1.3) circle (1pt);
\filldraw [black] (-5.9,1.3) circle (1pt);   
\node (a) at (-5,1) [label=\textbf{--}]{};
\node (a) at (-4,1) [label=\textbf{+}]{};
\node (a) at (-4.35,1.15) [label=\tiny{1}]{};
\node (a) at (-3,1) [label=\textbf{--}]{};
\node (a) at (-2,1) [label=\textbf{+}]{};
\node (a) at (-2.35,1.15) [label=\tiny{2}]{};
\node (a) at (-1,1) [label=\textbf{--}]{};
\node (a) at (0,1) [label=\textbf{--}]{};
\node (a) at (1,1) [label=\textbf{--}]{};
\node (a) at (2,1) [label=\textbf{--}]{};
\node (a) at (3,1) [label=\textbf{+}]{};
\node (a) at (2.65,1.15) [label=\tiny{3}]{};
\node (a) at (4,1) [label=\textbf{+}]{};
\node (a) at (3.65,1.15) [label=\tiny{4}]{};
\node (a) at (5,1) [label=\textbf{+}]{};
\node (a) at (4.65,1.15) [label=\tiny{5}]{};
 \filldraw [black] (5.5,1.3) circle (1pt);
\filldraw [black] (5.7,1.3) circle (1pt);
\filldraw [black] (5.9,1.3) circle (1pt); 
\draw[<->, red] (1,2) arc
    [
        start angle=0,
        end angle=180,
        x radius=1.5cm,
        y radius =1cm
    ] ;
\draw [->] (7,2)--(9,2);
    \draw[thick, <->] (10,0.8)--(22,0.8);
\foreach \x in {11,12,13,14,15,16,17,18,19,20,21}
    \draw[thick, -](\x cm, 0.9)--(\x cm, 0.7) node[anchor=north]{};
\node (a) at (11,-0.41) [label=$-5$]{}; 
\node (a) at (12,-0.41) [label=$-4$]{}; 
\node (a) at (13,-0.41) [label=$-3$]{}; 
\node (a) at (14,-0.41) [label=$-2$]{}; 
\node (a) at (15,-0.41) [label=$-1$]{}; 
\node (a) at (16,-0.35) [label=$0$]{}; 
\node (a) at (17,-0.35) [label=$1$]{}; 
\node (a) at (18,-0.35) [label=$2$]{}; 
\node (a) at (19,-0.35) [label=$3$]{}; 
\node (a) at (20,-0.35) [label=$4$]{}; 
\node (a) at (21,-0.35) [label=$5$]{}; 
 \filldraw [black] (10.5,1.3) circle (1pt);
\filldraw [black] (10.3,1.3) circle (1pt);
\filldraw [black] (10.1,1.3) circle (1pt);   
\node (a) at (11,1) [label=\textbf{--}]{};
\node (a) at (13,1) [label=\textbf{--}]{};
\node (a) at (12,1) [label=\textbf{+}]{};
\node (a) at (11.65,1.15) [label=\tiny{1}]{};
\node (a) at (14,1) [label=\textbf{--}]{};
\node (a) at (15,1) [label=\textbf{--}]{};
\node (a) at (16,1) [label=\textbf{--}]{};
\node (a) at (17,1) [label=\textbf{+}]{};
\node (a) at (16.65,1.15) [label=\tiny{2}]{};
\node (a) at (18,1) [label=\textbf{--}]{};
\node (a) at (18.65,1.15) [label=\tiny{3}]{};
\node (a) at (19,1) [label=\textbf{+}]{};
\node (a) at (20,1) [label=\textbf{+}]{};
\node (a) at (19.65,1.15) [label=\tiny{4}]{};
\node (a) at (21,1) [label=\textbf{+}]{};
\node (a) at (20.65,1.15) [label=\tiny{5}]{};
 \filldraw [black] (21.5,1.3) circle (1pt);
\filldraw [black] (21.7,1.3) circle (1pt);
\filldraw [black] (21.9,1.3) circle (1pt); 
    \end{tikzpicture}
    \caption{An example of a long range spin swap in Ising between spins at sites $-2$ and  $1$.}
    \end{subfigure}
    \par\vspace{5mm}
    \begin{subfigure}[b]{\textwidth}
    \centering
    \begin{tikzpicture}[scale=0.6]
    \draw[thick, <-] (-7,-1)--(0,-1);
\foreach \x in {-6,-5,-4,-3,-2,-1}
    \draw[thick, -](\x cm, -1.1)--(\x cm, -0.9) node[anchor=north]{$\x$};

\filldraw [black] (-1,-0.5) circle (4pt);
\filldraw [black] (-2,-0.5) circle (4pt);
\filldraw [black] (-2,0.5) circle (4pt);
\filldraw [black] (-2,0) circle (4pt);
\filldraw [black] (-2,1) circle (4pt);
 \filldraw [black] (-6.5,-0.7) circle (1pt);
\filldraw [black] (-6.7,-0.7) circle (1pt);
\filldraw [black] (-6.9,-0.7) circle (1pt);
\draw[red] (-2,0.5) ellipse (0.5cm and 0.75cm);
\draw[->, red] (-2,1.5) arc
    [
        start angle=180,
        end angle=0,
        x radius=0.5cm,
        y radius =0.5cm
    ] ;
\draw [->] (2,-0.5)--(4,-0.5);
\draw[thick, <-] (6,-1)--(13,-1);
\foreach \x in {7,8,9,10,11,12}
    \draw[thick, -](\x cm, -1.1)--(\x cm, -0.9) node[anchor=north]{};
\node (a) at (7,-2) [label=$-6$]{};    
\node (a) at (8,-2) [label=$-5$]{}; 
\node (a) at (9,-2) [label=$-4$]{}; 
\node (a) at (10,-2) [label=$-3$]{}; 
\node (a) at (11,-2) [label=$-2$]{}; 
\node (a) at (12,-2) [label=$-1$]{}; 
\filldraw [black] (12,-0.5) circle (4pt);
\filldraw [black] (12,0.5) circle (4pt);
\filldraw [black] (12,0) circle (4pt);
\filldraw [black] (12,1) circle (4pt);
\filldraw [black] (11,-0.5) circle (4pt);
 \filldraw [black] (6.5,-0.7) circle (1pt);
\filldraw [black] (6.3,-0.7) circle (1pt);
\filldraw [black] (6.1,-0.7) circle (1pt);
    \end{tikzpicture}
        \caption{The equivalent particle jump in the stood up process}
    \end{subfigure}
    \caption{An example of the particle dynamics coming from the long range Ising dynamics.}
    \label{fig:stacks_merging}
\end{figure}

\subsection{Natural reversible dynamics for the particle system}\label{sec:nicelong}

In the previous sections we defined a natural long range dynamics for the Ising model and then used the stand up mapping $T^n:\Bcal_n\to\Omega$ to transfer these dynamics to a rather unusual particle dynamics on $\Omega$. It is also possible to begin with a natural dynamics for the particle system, with the moves of the restricted dynamics described in Section \ref{sec:restricted dynamics} and rates with a simple form, and use the inverse map $(T^n)^{-1}$ to find a measure on $\Bcal_n$ for which the transferred dynamics are reversible. This leads to a rather unusual measure on $\Bcal_n$ but the task of proving reversibility for the transferred dynamics, and therefore for the particle dynamics, will still be relatively straightforward. 

Consider a configuration $\omega\in\Omega$ and for $i\in \N$ define $r_i(\omega) =\omega_{-i}\vee 1$. If $1\leq k \leq \omega_{-i}$ then $k$ particles at site $-i<-1$ jump to $-i+1$, resulting in configuration $\omega^\prime\in\Omega$, at rate
\begin{equation}\label{eq:particlerate1}
w(\omega,\omega^\prime)= \frac{q^{-k}}{r_i(\omega)r_{i+1}(\omega)}.
\end{equation}
$k$ particles at site $-i$ jump to $-i-1$, resulting in configuration $\omega^\prime\in\Omega$, at rate
\begin{equation}\label{eq:particlerate2}
w(\omega,\omega^\prime)= \frac{q^{k}}{r_i(\omega)r_{i-1}(\omega)}.
\end{equation}
Lastly, if $1\leq k \leq \omega_{-1}$ then $k$ particles jump out of the system resulting in configuration $\omega^\prime\in\Omega$, at rate
\begin{equation}\label{eq:particlerate3}
w(\omega,\omega^\prime)= \frac{q^{-k}}{r_1(\omega)},
\end{equation}
and $k$ particles jump into the system from the right onto site -1 resulting in configuration $\omega^\prime\in\Omega$, at rate
\begin{equation}\label{eq:particlerate4}
w(\omega,\omega^\prime)= \frac{q^{k}}{r_1(\omega)}.
\end{equation}
In order to find a measure for which these dynamics are reversible, we transfer these dynamics to an Ising chain and find a reversible measure on $\Omega^{\Is}$. For $\omega\in\Omega$ let $\sigma=(T^n)^{-1}(\omega)$ and, committing a slight abuse of notation, we define $r_i(\sigma)=r_i(T^n(\sigma))=r_i(\omega)$ to be the maximum of 1 and the number of negative spins between the $i^{th}$ and $(i+1)^{st}$ positive spin in $\sigma$, counted from the left. We further define $r_0(\sigma)=1$. We take the probability measure $\lambda^{(n)}$ on $\Omega^{\Is}$ by

\begin{equation}
\lambda^{(n)}(\sigma)=\frac{1}{Z}q^{f_c(\sigma)}\prod_{i\geq 0}r_i(\sigma).
\end{equation}
The corresponding dynamics have the same transitions as those described in Section \ref{sec:restricted dynamics}. A negative spin may swap places with either the closest positive spin to its left or the closest positive spin to its right. Equivalently a positive spin may swap with a negative spin at an arbitrary distance to its left or right, provided that this swap maintains the relative order of positive spins. Suppose this positive spin is the $i^{th}$ positive spin in $\sigma$ counted from the left. This positive spin swaps places with the negative spin at distance $k$ to its right, provided this swap maintains the relative order of positive spins in $\sigma$, at rate
$$
\frac{q^{-k}}{r_{i-1}(\sigma)r_{i}(\sigma)},
$$
and with the negative spin at distance $k$ to its left, provided this swap maintains the relative order of positive spins in $\sigma$, at rate 
$$
\frac{q^{k}}{r_{i-1}(\sigma)r_{i}(\sigma)}.
$$
If the proposed swap would not maintain the relative order of positive spins in $\sigma$ then it is not allowed. It can easily be seen that these dynamics are reversible for $\lambda^{(n)}$ and hence that the dynamics for the particle system are reversible for the measure
\begin{equation}
\kappa^{(n)}(\omega) := \lambda^{(n)}\big((T^n)^{-1}(\omega)\big).
\end{equation}
Recall from \eqref{eq:fcparticle} that we have
\begin{equation}
f_c\big((T^n)^{-1}(\omega)\big) =\sum_{r\geq 0}g_c^{(r)}(\omega)
\end{equation}
where $g_c^{(r)}(\omega)$ depends on the occupation of every site left of $-r$. We also have $\prod_{i\geq0}r_i\big((T^n)^{-1}(\omega)\big)=\prod_{i\geq 1}r_i(\omega)$. Putting this together gives
\begin{equation}
  \kappa^{(n)}(\omega) \propto \prod_{i\geq 1}  q^{g_c^{(i)}(\omega)}r_i(\omega),
\end{equation}
where we defined $r_0(\omega)= 1$.
Note that, due to the dependencies within $g_c^{(i)}$, this measure is not a product of independent measures for each site.
We summarise these results in the following lemma.

\begin{lem}\label{lem:particletoIsingconcentration}
For every $c\in\mathbb{R}$ and $q\in(0,1)$, the measure $\kappa^{(n)}$ is stationary and reversible for the
dynamics described above with rates given by \eqref{eq:particlerate1}-\eqref{eq:particlerate4}. Moreover, $\kappa^{(n)}$ concentrates on $\Omega$.
\end{lem}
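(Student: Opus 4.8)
The plan is to prove the statement on the Ising side — where the transferred dynamics have the transparent form of a single positive spin hopping past a run of negative spins — and then transport everything back along the bijection $T^n\colon\Bcal_n\to\Omega$ of Lemma \ref{lem: T^n bijection}, which preserves reversibility and stationarity. First I would pin down the dictionary: a step of \eqref{eq:particlerate1}--\eqref{eq:particlerate4} moves $k$ particles between neighbouring sites (or across the boundary), and under $(T^n)^{-1}$ this is exactly the swap of the $i$th positive spin of $\sigma=(T^n)^{-1}(\omega)$ (counted from the left) with the negative spin lying $k$ places to its right or to its left; the admissibility constraint ($1\le k\le\omega_{-i}$, or the analogous one at the boundary) is precisely the condition that the swap not change the relative order of the positive spins. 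With the convention $r_0\equiv1$ all four families of rates collapse into one rule, as already described in Section \ref{sec:nicelong}: the $i$th positive spin hops $k$ steps right at rate $q^{-k}/(r_{i-1}(\sigma)r_i(\sigma))$ and $k$ steps left at rate $q^{k}/(r_{i-1}(\sigma)r_i(\sigma))$, the boundary transitions being the case $i=1$.

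The core step is the detailed-balance computation for $\lambda^{(n)}$. Fix $\sigma,\sigma'$ differing by the $i$th positive spin moving $k$ steps to the right. Two quantities change. First $f_c$: since every spin strictly between the old and new positions of this positive spin is negative, passing from $\sigma$ to $\sigma'$ amounts to removing a $+1$ from a site $p$ and placing it at $p+k$; substituting into \eqref{eq:fc}, the contribution of site $p$ changes by $2(p-c)$ and that of site $p+k$ by $-2(p+k-c)$, irrespective of the signs of $p$ and $p+k$, so $f_c(\sigma')-f_c(\sigma)=-2k$. Second, only the two runs flanking the moved spin change length, $\omega_{-(i-1)}\mapsto\omega_{-(i-1)}+k$ and $\omega_{-i}\mapsto\omega_{-i}-k$, hence $r_j$ is unchanged for $j\notin\{i-1,i\}$ and $\lambda^{(n)}(\sigma')/\lambda^{(n)}(\sigma)=q^{-2k}\,r_{i-1}(\sigma')r_i(\sigma')/(r_{i-1}(\sigma)r_i(\sigma))$. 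The ratio of the forward rate to the reverse rate (the $i$th positive spin of $\sigma'$ hopping back $k$ steps left) is $\big(q^{-k}/(r_{i-1}(\sigma)r_i(\sigma))\big)/\big(q^{k}/(r_{i-1}(\sigma')r_i(\sigma'))\big)$, which equals the same expression, so detailed balance holds for every allowed transition; at the boundary ($i=1$) the factor $r_0\equiv1$ reduces the $r$-terms to $r_1$ and recovers \eqref{eq:particlerate3}--\eqref{eq:particlerate4}. Since these are all the transitions, $\lambda^{(n)}$ is reversible, hence stationary, on $\Omega^{\Is}$, and pulling back along $T^n$ transfers this to $\kappa^{(n)}=\lambda^{(n)}\circ(T^n)^{-1}$ and the particle dynamics.

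It remains to see that $\kappa^{(n)}$ concentrates on $\Omega$, i.e. that the unnormalised weights $q^{f_c((T^n)^{-1}(\omega))}\prod_{i\ge1}r_i(\omega)$ are summable over $\omega\in\Omega$. Here I would invoke the $\beta=0$ specialisation of Section \ref{sec:Ising}: then $\mu^c\propto q^{f_c}$, so $\nu^n_{0,q}$ stands up to the product measure $\pi(\omega)=\prod_{i\ge1}q^{2i\omega_{-i}}(1-q^{2i})$ of Remark \ref{rem: pi}, which shows $q^{f_c((T^n)^{-1}(\omega))}$ is a constant multiple of $\prod_{i\ge1}q^{2i\omega_{-i}}$. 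The sum in question is therefore a constant times $\prod_{i\ge1}\sum_{m\ge0}(m\vee1)q^{2im}=\prod_{i\ge1}\big(1+q^{2i}/(1-q^{2i})^2\big)$, which converges since $q^{2i}/(1-q^{2i})^2=O(q^{2i})$ is summable. As $q^{f_c}=0$ off $\Bcal$, support on $\Bcal$ — and, after the pull-back, on $\Omega$ — is automatic.

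The obstacle I anticipate is organisational rather than conceptual: making the particle$\leftrightarrow$spin dictionary and the run-length indices line up cleanly across all four rate families (bulk hops in each direction, entering the boundary, leaving the boundary), and checking in each case that the change in $f_c$ produces exactly the $q^{\pm k}$ prefactor while the surviving $r$-factors are precisely the pair singled out by the product part $\prod_{i\ge0}r_i$ of $\lambda^{(n)}$. Once the $\beta=0$ comparison is in place, the concentration step is routine.
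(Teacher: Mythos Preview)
Your detailed-balance argument is correct and coincides with the paper's (the paper simply asserts that the rates were chosen to satisfy detailed balance and leaves the computation to the reader; you carry it out). Your identification of the particle moves with hops of the $i$th positive spin, the change $f_c(\sigma')-f_c(\sigma)=-2k$, and the fact that only $r_{i-1},r_i$ change are all right, and the boundary case $i=1$ with $r_0\equiv1$ falls out as you say.

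For concentration you take a genuinely different route from the paper. The paper works on the Ising side and runs a Borel--Cantelli argument in the spirit of Lemmas~\ref{lem:nearestneighbourconcentration} and~\ref{lem:longrangeconcentration}: for $j>0$ it compares $\{\sigma_j=-1\}$ with its spin-flipped version to get $\lambda^{(n)}(\sigma_j=-1)\le 2q^{2(j-c)}$, and for $j<k<0$ it bounds the probability that a run of positive spins begins at $k$ and sums. You instead stay on the particle side and observe that the $\beta=0$ equivalence of measures (Remark~\ref{rem: pi}) gives $q^{f_c((T^n)^{-1}(\omega))}=C_n\prod_{i\ge1}q^{2i\omega_{-i}}$, so the partition function of $\kappa^{(n)}$ factorises as $C_n\prod_{i\ge1}\big(1+q^{2i}/(1-q^{2i})^2\big)<\infty$. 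This is correct and rather cleaner: it yields the normalising constant explicitly as a convergent product and avoids the separate treatment of the two halves of $\Z$. The paper's approach, on the other hand, is self-contained on $\Omega^{\Is}$ and does not need to invoke the earlier stood-up product measure; it also makes transparent that $\lambda^{(n)}$ concentrates on all of $\Bcal$ (not just one $\Bcal_n$), which your argument gets only after noting that the dynamics conserve $N$.
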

\begin{proof}
The measure $\lambda^{(n)}$ and rates are chosen specifically to satisfy detailed balance as a straightforward calculation will verify. Reversibility with respect to $\kappa^{(n)}$ is hence inherited from the dynamics on $\Omega^{\Is}$  through the mapping $(T^n)^{-1}$.

It remains to show concentration on $\Omega$. Using $T^n$ it is sufficient to show concentration of $\lambda^{(n)}$ on $\Bcal$.
For $j>0$ consider the event $\{\sigma_j=-1\}$ and let $F_j: \Omega^{\Is}\to\Omega^{\Is}$ be the mapping that flips the spin at $j$. For $\sigma\in\Omega^{\Is}$ such that $\sigma_j=-1$ and $j^\prime$ such that $j$ lies in the run of negative spins corresponding to $r_{j^\prime}(\sigma)$ we have 
\begin{equation}
\lambda^{(n)}(\sigma)= q^{2(j-c)}\frac{r_{j^\prime}(\sigma)}{(r_{j^\prime}(\sigma)-1)\vee 1}\lambda^{(n)}\big(F_j(\sigma)\big)\leq 2q^{2(j-c)}\lambda^{(n)}\big(F_j(\sigma)\big).
\end{equation}
Now as $F_j$ is a bijection we have that $\lambda^{(n)}(\{\sigma_j=-1\})\leq 2q^{2(j-c)}\lambda^{(n)}(\{\sigma_j=1\})\leq 2q^{2(j-c)}$. The upper bound is summable, hence by Borel-Cantelli there are $\lambda^{(n)}$-a.s. only finitely many $j>0$ such that $\sigma_j=-1$.

Now for $j<k<0$ suppose that $\sigma_j=\sigma_k=1$ so that $\sigma$ has an entire (at least one) run of negative spins left of 0. Let $r_{j^\prime-1}(\sigma)$ and $r_{j^\prime}(\sigma)$ correspond to the runs of negative spins immediately to the left and right of $j$, respectively. Note that $r_{j^\prime}(\sigma)\leq k-j-1$. We have that
\begin{align*}
\lambda^{(n)}(\{\sigma_j=\sigma_k=1\})&=\sum_{\sigma\in\Omega^{\Is}}\1_{\{\sigma_j=\sigma_k=1\}}q^{-2(j-c)}q^{f_c(F_j(\sigma))}\frac{r_{j^\prime-1}(\sigma)r_{j^\prime}(\sigma)}{r_{j^\prime-1}(\sigma)+r_{j^\prime}(\sigma)+1}\prod_{i\geq1} r_i\big(F_j(\sigma)\big)
\\
&\leq q^{-2(j-c)}\sum_{\sigma\in\Omega^{\Is}}\1_{\{\sigma_j=\sigma_k=1\}}(k-j-1)(\sigma)\lambda^{(n)}\big(F_j(\sigma)\big)
\\
&= (k-j-1)q^{-2(j-c)} \lambda^{(n)}\big(\{\sigma_j=-1,\sigma_k=1\}\big).
\end{align*}
Hence the probability of having a run of negative spins starting immediately to the left of $k<0$ is bounded above by
\begin{equation}
\sum_{j<k}(k-j-1)q^{-2(j-c)}=\frac{q^{2(2+c-2k)}}{(1-q^2)^2}.
\end{equation}
This probability is summable over $k<0$, hence by Borel-Cantelli there are $\lambda^{(n)}$-a.s. only finitely many $k<0$ such that a run of negative spins starts at $k-1$.
\end{proof}

\end{document}